\newcommand{\andd}{\qquad\text{and}\qquad}
\newcommand{\ubsd}{u\in\bS^{d-1}}
\newcommand{\ind}[1]{\mathbbm{1}\left\{#1\right\}}
\newcommand{\rdd}{\mathbb{R}^{d}}
\newcommand{\re}{\mathbb{R}}
\newcommand{\Prr}[1]{\Pr\left(#1\right)}
\newcommand{\Prrr}[1]{\Pr\Big(#1\Big)}
\newcommand{\sam}[2]{\mathbb{#1}_{#2}}
\newcommand{\norm}[1]{\lVert#1\rVert}
\newcommand{\eqd}{\stackrel{\text{d}}{=}}
\newcommand{\am}[2]{\widetilde{\cM}_1(#1,#2)}
\DeclareMathOperator{\SM}{SM}
\DeclareMathOperator{\BP}{BP}
\DeclareMathOperator{\TV}{TV}
\DeclareMathOperator{\interior}{int}
\newcommand{\E}[2]{\mathbb{E}_{#1}#2}
\DeclareMathOperator*{\lap}{ {\rm Laplace}}
\DeclareMathOperator*{\argmin}{ {\rm argmin}}
\DeclarePairedDelimiter\floor{\lfloor}{\rfloor}
\DeclareMathOperator{\med}{MED}
\DeclareMathOperator{\mad}{MAD}
\DeclareMathOperator{\tad}{TAD}
\DeclareMathOperator{\tm}{TM}
\DeclareMathOperator{\EM}{EM}
\newcommand{\cM}{\mathcal{M}}
\newcommand{\cN}{\mathcal{N}}
\newcommand{\cC}{\mathcal{C}}
\newcommand{\cP}{\mathcal{P}}
\newcommand{\cQ}{\mathcal{Q}}
\newcommand{\sB}{\mathscr{B}}
\newcommand{\cU}{\mathcal{U}}
\newcommand{\bX}{{\mathbb{X}}}
\newcommand{\bD}{\mathbf{D}}
\newcommand{\bfS}{\mathbf{S}}
\newcommand{\bS}{\mathbb{S}}
\newcommand{\hmu}{{\hat{\nu}}}
\DeclareMathOperator{\vol}{vol}
\DeclareMathOperator{\KL}{KL}
\newtheorem{theorem}{Theorem}
\newtheorem{corollary}[theorem]{Corollary}
\newtheorem{lemma}[theorem]{Lemma}
\theoremstyle{remark}
\newtheorem{remark}[theorem]{Remark}
\theoremstyle{definition}
\newtheorem{definition}{Definition}
\newtheorem{example}{Example}
\newtheorem{condition}{Condition}
\numberwithin{equation}{section}
\numberwithin{theorem}{section}
\numberwithin{example}{section}
\numberwithin{definition}{section}
\title{Differentially private projection-depth-based medians}
\author{Kelly Ramsay \& Dylan Spicker}
\begin{document}
\maketitle
\begin{abstract}
We develop $(\epsilon,\delta)$-differentially private projection-depth-based medians using the propose-test-release (PTR) and exponential mechanisms. Under general conditions on the input parameters and the population measure, (e.g. we do not assume any moment bounds), we quantify the probability the test in PTR fails, as well as the cost of privacy via finite sample deviation bounds. 
Next, we show that when some observations are contaminated, the private projection-depth-based median does not break down, provided its input location and scale estimators do not break down. 
We demonstrate our main results on the canonical projection-depth-based median, as well as on projection-depth-based medians derived from trimmed estimators. In the Gaussian setting, we show that the resulting deviation bound matches the known lower bound for private Gaussian mean estimation. In the Cauchy setting, we show that the ``outlier error amplification'' effect resulting from the heavy tails outweighs the cost of privacy. This result is then verified via numerical simulations. Additionally, we present results on general PTR mechanisms and a uniform concentration result on the projected spacings of order statistics, which may be of general interest.
\end{abstract}
\section{Introduction}
Privacy has become a central concern in data collection and analysis owing to considerations around personal autonomy, freedom of expression, and fairness and equality. 
Differential privacy has emerged as a powerful framework for maintaining privacy in data collection and analysis. 
It has recently been adopted by many high profile institutions, including the United States Census Bureau \citep{Abowd20222020}. 
This has spurred investigations of many classical statistical problems through a differentially private lens. 
Among such problems is median estimation.

Early literature on differentially private median estimation focuses on the univariate setting \citep{Dwork2009, Avella-Medina2019a, Brunel2020, zamos2020}. 
In the multivariate setting, several authors have studied differentially private medians through via differentially private mean estimation. 
These works have mainly focused either on the Tukey median \citep{Brown2021,2021Liub,2022arXiv220807438B}, or variants of it \citep{Hopkins2021}. 
In addition, as part of a more general framework for private high-dimensional estimation, \citet{2021Liu} studied one instance of a projection-depth-based median based on the trimmed mean and the trimmed variance. 
Given that their focus was mean estimation, all of these works enforce a bound on the number of moments possessed by the population measure. 
However, a classical method for assessing the robustness of a median is to examine its behavior in the case where the population measure possesses no moments. 
Recently, \citet{ramsay2022concentration} considered this problem. 
They studied differentially private multivariate medians arising from depth functions, including the Tukey median, without enforcing any moment bounds on the population measure. 
However, their framework does not cover projection-depth-based medians \citep{Zuo2003, Zuo2004}. 




Unlike the Tukey median and those studied by \citet{ramsay2022concentration}, when the location and scale measures are chosen appropriately, projection-depth-based medians simultaneously possess affine equivariance and achieve a high breakdown point \citep{Zuo2003, Zuo2004}. 
Affine equivariance is a critical property for location estimators, as it ensures that the units of measurement do not have an effect on the resulting estimate, see, for example, \citet{Zuo2000}. 
In addition, the breakdown point is a common measure of the robustness of an estimator. It measures the fraction of points which must be corrupted to make an estimate arbitrarily large \citep{Hampel2005}. 
For instance, the breakdown point of the Tukey median lies between $(1+d)^{-1}$ and $1/3$ \citep{Liu_2017}, whereas the projection-depth-based medians generally retain a much higher finite sample breakdown point, (e.g., approximately $1/2-d/n$ \citep{Zuo2003}). 
This finite sample breakdown point is carried over to the private setting, (see Theorem~\ref{thm::bdp}). 
One may also note that, when the location and scale measures are chosen appropriately, the projection-depth-based medians enjoy higher relative efficiency than that of the Tukey median \citep{Zuo2003}. 
In addition, \cite{Depersin2022} recently considered the behavior of a new subclass of projection-depth-based medians inspired by the median of means estimator. 
They demonstrate under general assumptions, e.g., the population measure need not have any moments, and under adversarial contamination this subclass of medians achieves sub-Gaussian rates with respect to Mahalanobis error. 
Motivated by these properties, our work develops and studies differentially private projection-depth-based medians. 
We consider a general location estimation setting and do not place any moment bounds on the population measure. 

Our main tool for privatizing projection-depth-based medians is the propose-test-release (PTR) algorithm, which was introduced by \citet{Dwork2009} in the context of differentially private estimation of various robust, univariate statistics. 
Our work takes inspiration from that work, as well as from the works of \citet{Brunel2020} and \citet{Brown2021}. 
\citet{Brunel2020} used PTR to develop sub-Gaussian median estimators in the univariate setting, and
\citet{Brown2021} developed a private Gaussian mean estimator, using a PTR mechanism based on safety and the exponential mechanism. We incorporate both ideas into our proposed mechanism.   
Notably, the mechanism proposed by \citet{Brown2021} was generalized by \citet{2021Liu} to be an all-purpose solution to various private, statistical problems. 
Our work builds on that of \citet{2021Liu}, in the sense that their mean estimator fits into the class of projection-depth-based estimators. 

Our contributions are as follows: We first propose a class of $(\epsilon,\delta)$-differentially private projection-depth-based medians. 
Next, under general conditions on the population measure, we quantify both the probability that the PTR mechanism fails to release an estimate and the cost of privatizing the projection-depth-based medians (Theorems~\ref{thm::no-reply}, and~\ref{thm::pd_acc2}, respectively). 
To measure the cost of privacy, we provide finite sample bounds on the distance between a private projection-depth-based median and the corresponding population value, which hold with high probability. 
Next, we show that when some observations are contaminated, the private projection-depth-based median does not break down, provided its input location and scale estimators do not break down (Theorem~\ref{thm::bdp}). 
We then demonstrate our methods for the canonical projection-depth-based median, as well as on projection-depth-based medians derived from trimmed estimators. 
In particular, we show that Theorems~\ref{thm::no-reply} and~\ref{thm::pd_acc2} apply provided the ``projected densities'' are bounded below in an interval containing a range of inner quantiles (Condition~\ref{cond::dens_lb}). 
As a first example, we show that in the Gaussian setting, our deviation bound matches the optimal lower bound for sub-Gaussian mean estimation given by \citet{Kamath2018} and \citet{Cai2019} (Example~\ref{ex::gaus}). 
We then show that in the Cauchy setting, the increase in error resulting from contamination or heavy tails, which we call the ``outlier error amplification'' is larger than the cost of privacy (Example~\ref{ex::cauchy}). 
We confirm this phenomenon with numerical studies (Section~\ref{sec::sim}). 
Lastly, we provide some results for analysis of general PTR mechanisms (Section~\ref{sec::ptr_general}), as well as a uniform concentration result on the projected spacings of order statistics (Lemma~\ref{thm::order_spacing_new_uniform}), which may be of general interest. 

\section{Preliminaries}
\subsection{Outlyingness and projection-depth-based medians}
We first review projection-depth-based medians, starting with \emph{projected outlyingness} \cite{Stahel1981,Donoho1982,Donoho1992, Zuo2000, Zuo2003}. 
Let $\mathcal{M}_1(\re^d)$ be the set of measures on $\mathscr{B}(\rdd)$, where $\mathscr{B}(S)$ denotes the Borel sets of $S$. 
For a measure $\nu\in\cM_1(\rdd)$ and $u$ from the unit $(d-1)$-sphere, $\bS^{d-1}$, let $\nu_u$ be the law of $X^\top u$ if $X\sim \nu$. 
\begin{definition}
Given a translation and scale equivariant estimator $\mu$ and a translation invariant and scale equivariant estimator $\sigma$, the projected outlyingness at $x\in\rdd$ with respect to $\nu$ is
\begin{equation}\label{eqn::outlyingness}
O(x,\nu)=\sup _{u\in \bS^{d-1}}O_u(x,\nu_u)=\sup _{u\in \bS^{d-1}} \frac{\left|x^\top u-\mu(\nu_u)\right|}{\sigma(\nu_u)}. 
\end{equation}
\end{definition}
For brevity, we will refer to the projected outlyingness as simply the outlyingness, and denote $\mu(\nu_u)$ as $\mu_u$, $\sigma(\nu_u)$ as $\sigma_u$, $\mu(\hat{\nu}_u)$ as $\hat\mu_u$, and $\sigma(\hat{\nu}_u)$ as $\hat\sigma_u$. 
The sample version of the outlyingness function is given by $O(x,\hat{\nu})$. 
The canonical choices of $\mu$ and $\sigma$ are the median and the median absolute deviation, respectively. 
The outlyingness based on the median and the median absolute deviation was first developed by \citet{Stahel1981} and \citet{Donoho1982}. 
It was later studied in general by \citet{Zuo2003}. 
Choosing $\mu$ and $\sigma$ to be the trimmed mean and trimmed variance recovers the objective function used in the PTR mechanism developed by \citet{2021Liu} for differentially private mean estimation. 
Lastly, choosing $\mu$ and $\sigma$ to be the median of means and a median of means version of the mad recovers the outlyingness studied by \citet{Depersin2022}. 
In our examples, we consider the trimmed mean, the median, trimmed mean absolute deviation, median absolute deviation, and the interquartile range. 
The minimizer of the outlyingness function is a \emph{projection-depth-based median} \citep{Stahel1981,Donoho1982,Zuo2003}. 
\begin{definition}
Given a translation and scale equivariant estimator $\mu$ and a translation invariant and scale equivariant estimator $\sigma$, the projection-depth-based median of $\nu$ is
$$\theta\coloneqq \theta(\mu,\sigma,\nu)=\int_{\argmin O} x dP,$$ where $P$ is uniform on $\argmin_{x\in\rdd} O(x,\nu)$.  
\end{definition}

The projection-depth-based medians enjoy several nice properties, including local and global robustness, affine equivariance, strong $\sqrt{n}$-consistency, and a relatively simple limiting distribution \citep{Zuo2003, Zuo2004}. 
To the best of our knowledge, among the competing affine equivariant medians, when the location and scale measures are chosen appropriately, (e.g. the median and median absolute deviation), the projection-depth-based medians have the highest breakdown point \citep{Zuo2003}. 
In Section \ref{sec::PD}, we show that when some observations are contaminated, the private projection-depth-based median does not break down, provided its input location and scale estimators do not break down (Theorem~\ref{thm::bdp}).  
Projection-depth-based medians have been shown to have higher asymptotic efficiency, relative to competitors \citep{Zuo2003}. 
In addition, recently, \citet{Depersin2022} showed that the median of means version of the projection-depth-based median exhibits sub-Gaussian rates, under adversarial contamination and fairly general assumptions. 
For more information on projection-depth-based medians, we refer the reader to \citep{Stahel1981,Donoho1982,Zuo2003, Zuo2004,Depersin2022}. 
\subsection{Differential privacy}\label{sec::DP_intro}
Next, we review necessary concepts from differential privacy. 
For a textbook introduction to differential privacy, see \citet{Dwork2014}.
A \emph{dataset} of size $n\times d$ is a collection of $n$ points in $\rdd$, $\mathbb{X}_n=\{X_i\}_{i=1}^n$, possibly with repetitions.
Let $\mathbf{D}_{n\times d}$ be the set of all datasets of size $n\times d$. 
For datasets $\mathbb{X}_n$ and $\mathbb{Y}_n$, let $\mathbb{X}_n \triangle \mathbb{Y}_n$ represent the dataset containing all rows in $\mathbb{X}_n$ which are not contained in $\mathbb{Y}_n$. 
Define $\bfS(\mathbb{X}_n,m)=\left \{ \mathbb{Y}_n\in \mathbf{D}_{n\times d} \colon  |\mathbb{X}_n \triangle \mathbb{Y}_n|=m \right\}$, the set of datasets of size $n\times d$ which differ from $\mathbb{X}_n$ in exactly $m$ rows. 
Define the empirical measure of $\mathbb{X}_n$, $\hmu_{\mathbb{X}_n}$, as that which assigns $1/n$ probability to each row in $\mathbb{X}_n$.
For a dataset with empirical measure $\hmu_{\mathbb{X}_n}$ define
$$\widetilde{\cM}_1(\hmu_{\mathbb{X}_n},m)=\{\tilde{\nu}\in\cM_1(\rdd)\colon \tilde{\nu}=\hmu_{\mathbb{Y}_n} \text{ for some } \mathbb{Y}_n\in \bfS(\mathbb{X}_n,m) \}.$$

Suppose that we observe a dataset $\bX_n$ comprised of $n$ i.i.d.\ samples from a measure $\nu\in\cM_1(\rdd)$, and wish to estimate a parameter $\theta\in\Theta$. 
One way to do this is via a \emph{mechanism}.  
Formally, a mechanism, $P_{\hat\nu}$, maps an empirical measure $\hat\nu$ to $\cM_1(\Theta)$. 
The estimator $\tilde\theta$ is then taken to be a draw from $P_{\hat\nu}$. 
When the corresponding measure is clear from context, we will suppress it in the notation, denoting $P_{\hat\nu}$ as $P$.
To produce a differentially private statistic, $P_{\hat\nu}$ should satisfy the following privacy guarantee.
\begin{definition}\label{def::dp}
A mechanism $P$ is $(\epsilon,\delta)$-differentially private if for any dataset $\mathbb{X}_n$, any $\tilde{\nu}\in \widetilde{\cM}_1(\nu_{\mathbb{X}_n},1)$, and any measurable set $B\in\mathscr{B}(\Theta)$, it holds that
\begin{equation}\label{eqn::dp}
    P_{\hat{\nu}_{\mathbb{X}_n}}(B)\leq e^\epsilon P_{\tilde{\nu}}(B)+\delta.
\end{equation} 
\end{definition}
The pair $(\epsilon,\delta)$ represent the privacy budget, where typically $\epsilon$ is a small constant and $\delta=O(n^{-k})$ for some positive integer $k$. Smaller values of both $\epsilon$ and $\delta$ enforce stricter privacy guarantees. 
The parameter $\epsilon$ controls the level of privacy assured to each individual, and $\delta$ is the probability of leakage, or, the probability that the privacy condition $P_{\hat{\nu}_{\mathbb{X}_n}}(B)\leq e^\epsilon P_{\tilde{\nu}}(B)$ fails to hold. 
If $\delta = 0$, the mechanism is said to be $\epsilon$-differentially private.
\section{Main results}\label{sec::PD}
\subsection{Private projection-depth-based medians}
In this section, we introduce private projection-depth-based medians. 
Private projection-depth-based medians rely on an instantiation of the \emph{exponential mechanism} \citep{McSherry2007}. 
For some $\tau>0$, let $A_{\tau,\hat{\nu}}=\{O(x ,\hat{\nu})\leq \tau\}$. 
Given $\tau,\eta,\epsilon>0$, the exponential mechanism applied to the outlyingness function $\EM_{\hat{\nu}}(\cdot;\tau,\eta,\epsilon)$ is the measure whose density satisfies
\begin{equation*}
d\EM_{\hat{\nu}}(\cdot;\tau,\eta,\epsilon)\propto\ind{x\in A_{\tau,\hat{\nu}}}\exp(-O(x,{\hat{\nu})\epsilon/4\eta})dx.
\end{equation*}
For brevity, we write $\EM_{\hat{\nu}}(\cdot;\tau,\eta,\epsilon)$ as simply $\EM_{\hat{\nu}}$ when $\tau,\eta,\epsilon$ are clear from the context. 
Now, this mechanism alone is not differentially private. 
That is because there are some datasets which are not ``safe''. 
\emph{Safety}, introduced by \citet{Brown2021} for Gaussian mean estimation, is a localized version of the $(\epsilon,\delta)$-differential privacy guarantee. 
We present a definition specific to our context. (For a generalized definition, see Section~\ref{sec::ptr_general}.) 
Take $\widehat\cM_{1}(n,d)$ to be the set of empirical measures constructed from datasets of size $n\times d$, $\{ \nu_{\sam{X}{n}}\colon\ \sam{X}{n}\in  \mathbf{D}_{n\times d}\}$. 
\begin{definition}\label{dfn::sfty}
Given $\epsilon,\delta,\tau,\eta>0$, $\hat\nu$ is $(\epsilon,\delta)$-safe if, for all $\tilde\nu\in\widetilde\cM(\hat\nu,1)$ and $B\in\sB(\rdd)$, it holds that
$$\EM_{\hat{\nu}}(B)\leq e^{\epsilon}\EM_{\tilde{\nu}}(B)+\delta\qquad\text{and}\qquad \EM_{\tilde{\nu}}(B)\leq e^{\epsilon}\EM_{\hat{\nu}}(B)+\delta.$$
The $(\epsilon,\delta)$-safe set is defined as
$\bfS(\epsilon,\delta)= \{ \hat\nu\in \widehat{M}_1(n,d)\colon\  \hat\nu \text{ is } (\epsilon,\delta)\text{-safe} \}.$
\end{definition}
Indeed, for all common choices of $\mu,\ \sigma$, such as the median and median absolute deviation, or the trimmed mean and variance, there exists $\hat\nu\notin\bfS(\epsilon,\delta)$, which implies that $\EM_{\hat{\nu}}$ is not differentially private. 
However, because the common choices of $\mu$ and $\sigma$ are robust statistics, in the sense that they are not corrupted under small levels of contamination, we expect most $\hat{\nu}$ to be safe. 
When we expect $\hat{\nu}$ to be safe with high probability, it is natural to develop a \emph{propose-test-release} (PTR) mechanism \citep{Dwork2009}. 

PTR mechanisms are characterized by a three-step procedure. 
First, you \textbf{propose} a mechanism $P_{\hat\nu}$. 
Second, you privately \textbf{test} if $\hat\nu$ is safe. 
Finally, if the test is passed, you \textbf{release} a draw from $P_{\hat\nu}$. If the test is not passed, nothing is released. 
In the event that nothing is released, the PTR mechanism still uses up the $(\epsilon,\delta)$ privacy budget. 
As such, it is critical that the test is passed with high probability. 
For our estimators, we take $P_{\hat\nu}=\EM_{\hat{\nu}}$. 
Since the test step of the algorithm is differentially private, it is a mechanism, and so we refer to it as the test mechanism. 
Our proposed mechanism's test mechanism relies on the \emph{safety margin}.  
\begin{definition}\label{dfn::sfty_mar}
Given $\epsilon,\delta>0$, the safety margin of $\hat\nu$ is 
$$\SM(\hat\nu)\coloneqq \SM(\hat\nu; \epsilon,\delta)=\inf\{m\in \mathbb{Z}^+\colon\ \exists \tilde\nu\in\tilde{\cM}(\hat\nu,m)\ s.t.\ \tilde\nu\notin \bfS(\epsilon,\delta) \}.$$
\end{definition}
The safety margin is the minimum number of points that we need to modify in the original dataset in order for it to become unsafe. 
Observe that if $\SM(\hat\nu)>0$, then $\hat\nu$ is safe. 
Let $W$ be a standard Laplace random variable. 
The test is defined as follows: 
\begin{equation}\label{eqn::test_m}    
Z=\ind{\SM(\hat\nu)+\frac{2}{\epsilon}W>\frac{2\log(1/2\delta)}{\epsilon}},
\end{equation}
where, if $Z=1$, then the test is passed. 

We can now introduce the differentially private projection-depth-based median which, given $\mu$ and $\sigma$, outputs a private version of $\theta$, the projection-depth-based median based on $\mu$ and $\sigma$.  
%
\begin{definition}
Given parameters $\epsilon,\delta,\eta,\tau>0$ and estimators $\mu$ and $\sigma$, 
the private projection-depth-based median is defined as 
$$\tilde{\theta}=\begin{cases}
    Y\sim \EM_{\hat\nu}& Z=1\\
    \perp & Z=0
\end{cases} .$$
\end{definition}
Note that if $\tilde{\theta}=\perp$, it is meant that nothing is output by the algorithm. 
That is, $\perp$ can be interpreted as a null value. 
The private projection-depth-based median is $(\epsilon,\delta)$-differentially private (Theorem~\ref{thm::s_suff}). 
Note that the mechanisms of \citet{Brown2021} and \citet{2021Liu} are PTR mechanisms based on $Z$ and the exponential mechanism. 
\citet{Brown2021} considers a different cost function and \citet{2021Liu} consider several cost functions, one of which is the outlyingness based on the trimmed mean and variance. 
The performance of the proposed PTR algorithm is naturally summarized by both the probability that the test is passed ($\E{}{Z}$) and the accuracy of the approximation of $\theta$ by $\tilde{\theta}$. 
With these two quantities, we characterize both the likelihood that the mechanism releases a statistic, and how useful that statistic is likely to be. 

To analyze these quantities, we first introduce the \emph{projected sensitivity} and the \emph{maximal absolute projected deviation} of an arbitrary, univariate estimator\footnote{By univariate estimator, we mean a functional on the space $\cM_1(\re)$.}, $T$, respectively defined as $$\tilde{S}_{n,k}(T)=\sup_{\substack{\tilde{\nu}\in \am{\hat\nu}{k}\\ u\in \bS^{d-1}}}|T(\hat\nu_{u})-T(\tilde{\nu}_u)| \qquad\text{and}\qquad S_{n}(T)=\sup_{u\in \bS^{d-1}}|T(\hat\nu_{u})-T(\nu_u)|.$$
The first quantity ($\tilde{S}_{n,k}(T)$), which we call the projected sensitivity, is an extension of sensitivity. It is the maximum change in $T$ when $k$ observations are set to arbitrary values, over all directions. 
The second quantity ($S_{n}(T)$), referred to as the maximal absolute projected deviation of $T$, is the maximum distance between the sample value of $T$ and the population value of $T$, over all directions. 
Using these definitions, we introduce three regularity conditions required for our main results. For $a,b\in\re$, we write $a\lesssim b$ ($a\gtrsim b$) if there exists a universal constant $c>0$ such that $a\leq cb$ ($a\geq cb$). 

\begin{condition}\label{cond::LS_Bound}
For all $d\geq 1$ and $k\leq 3$, there exists constants $a_{1}>0$ and $a_{2}>0$, and a sequence $\kappa_{n}\longrightarrow 0$ as $n\rightarrow \infty$ such that for all $n\geq 1$, $T\in\{\mu,\sigma\}$, and $t\geq 0$
$$\Prr{\tilde{S}_{n,k}(T)\geq t}\leq a_{1}\exp\left(-a_{2}nt\right)+\kappa_{n}.$$
\end{condition}
\begin{condition}\label{cond::Pop_Bound}
For all $d,n\geq 1$ and $T\in\{\mu,\sigma\}$ there exists a sequence of non-increasing functions $\zeta_{n,T}\longrightarrow 0$ as $n\to \infty$ such that for all $t\geq 0$ 
$$\Prrr{S_{n}(T)\geq t}\lesssim \zeta_{n,T}(t).$$
\end{condition}
\begin{condition}\label{cond::bounded-parameters}
There exists constants $c_1,c_2,c_3>0$ such that, for the population measure $\nu$, $0<c_1\leq \inf_{\ubsd} \sigma_u\leq \sup_{\ubsd}\sigma_u\leq c_2<\infty$ and $\sup_{\ubsd} |\theta^\top u- \mu_u|\leq c_3<\infty$.
\end{condition}

Condition~\ref{cond::LS_Bound} can be interpreted as stating that ``$\mu$ and $\sigma$ are robust to $k/n$-contamination with high probability.'' 
To elaborate, Condition~\ref{cond::LS_Bound} says that the probability that one can contaminate $k$ observations in the sample so that $\hat\mu_u$ or $\hat\sigma_u$ moves by at least $t$ in at least one direction $u$ is small. 
This condition may seem difficult to check, and so as a result, we provide examples verifying it in Section~\ref{sec::examples}. 
Condition~\ref{cond::Pop_Bound} requires that $\hat\mu_u$ and $\hat\sigma_u$ concentrate around their respective population values, uniformly in $u$.
Typically, this can be shown using standard empirical process or sub-Gaussian concentration techniques, (see for example, Lemma~\ref{lem::cond_2_med} and Lemma~\ref{lemm::cond2-gaussian-trimmed}). 
Condition~\ref{cond::bounded-parameters} is standard for projection-depth-based estimators (e.g., \citep{Zuo2003, Zuo2004}). 
Under these three conditions, we present a lower bound on the probability that the proposed PTR mechanism passes the test.
\begin{theorem}\label{thm::no-reply}
Suppose that Conditions~\ref{cond::LS_Bound}--\ref{cond::bounded-parameters} hold. Then there exists a universal constant $C>0$ such that for all $d,n\geq 1$, $\delta\leq 1/2$, $\epsilon,\tau>0$ and $\eta<1$ such that $\tau>4\eta+16c_3/c_1$ and
$$\delta> \exp\left(-\frac{\epsilon \tau}{8\eta}+\frac{\epsilon}{2}+d\log\left(\frac{2c_2 (\tau+2\eta)+4c_3}{c_1\tau/4-\eta-4c_3}\right) \right),$$
it holds that 
\begin{equation*}
    \E{}{Z}\geq 1-C\left(e^{\epsilon}\delta+a_{1}\exp\left(-\frac{a_{2}c_2n \eta}{4(\tau\vee 1)}\right)+\kappa_{n}+\zeta_{n,\sigma}(c_1/4)\right).
\end{equation*}
\end{theorem}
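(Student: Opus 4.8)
The plan is to lower bound $\E{}{Z}$ by showing that, with high probability, the safety margin $\SM(\hat\nu)$ is large enough that the noisy test in~\eqref{eqn::test_m} passes. Recall $Z=\ind{\SM(\hat\nu)+(2/\epsilon)W>(2/\epsilon)\log(1/2\delta)}$ with $W$ standard Laplace. The first step is the standard PTR decomposition: condition on the event $\{\SM(\hat\nu)\geq 2\}$ (or some small integer threshold). On this event, $\Pr(Z=0\mid \SM(\hat\nu)\geq 2)\leq \Pr((2/\epsilon)W < (2/\epsilon)\log(1/2\delta)-2) = \Pr(W< \log(1/2\delta)-\epsilon)$, and since $\log(1/2\delta)\geq 0$ for $\delta\leq 1/2$ and the lower tail of a standard Laplace satisfies $\Pr(W<-t)=\tfrac12 e^{-t}$ for $t\geq 0$, this is at most $\tfrac12 e^{\epsilon}\cdot 2\delta = e^{\epsilon}\delta$ (up to the universal constant, and handling the sign of $\log(1/2\delta)-\epsilon$). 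Hence
$$\E{}{Z}\geq \Pr(\SM(\hat\nu)\geq 2) - e^{\epsilon}\delta \geq 1 - \Pr(\SM(\hat\nu)\leq 1) - e^\epsilon\delta,$$
so everything reduces to bounding $\Pr(\SM(\hat\nu)\leq 1)$, i.e., the probability that $\hat\nu$ is within one data-point modification of being unsafe.

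The core of the argument is therefore a \emph{deterministic} sufficient condition for $\tilde\nu\in\widetilde\cM(\hat\nu,1)$ to be $(\epsilon,\delta)$-safe, phrased in terms of the projected sensitivities $\tilde S_{n,k}(\mu)$, $\tilde S_{n,k}(\sigma)$ (for $k\leq 3$, matching Condition~\ref{cond::LS_Bound}) and the projected deviations $S_n(\sigma)$, together with the population bounds $c_1,c_2,c_3$ of Condition~\ref{cond::bounded-parameters}. Concretely: if $\hat\sigma_u$ is bounded below by roughly $c_1/2$ uniformly in $u$ (which holds off an event of probability $\lesssim \zeta_{n,\sigma}(c_1/4)$ via Condition~\ref{cond::Pop_Bound}), and if $\tilde S_{n,3}(\mu)$ and $\tilde S_{n,3}(\sigma)$ are both small — say $\lesssim \eta/n$ up to constants depending on $c_1,c_2,c_3,\tau$ — then a single further modification of $\tilde\nu$ changes every $O_u(x,\cdot)$, and hence the exponential-mechanism density $d\EM_{\hat\nu}$, by at most a multiplicative $e^{\epsilon}$ factor on the set $A_{\tau,\cdot}$, and changes the truncation region $A_{\tau,\cdot}$ only by a set of $\EM$-mass $\leq \delta$ (this is exactly where the hypotheses $\tau>4\eta+16c_3/c_1$ and the explicit lower bound on $\delta$ enter: they guarantee the region $A_{\tau,\cdot}$ has enough mass and the relative perturbation of its boundary is controlled, via a volume-ratio computation giving the $d\log(\cdots)$ term). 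This is the step I expect to be the main obstacle: carefully propagating a perturbation of $(\mu_u,\sigma_u)$ through the supremum-over-$u$ in $O(x,\cdot)$, through the exponential tilt $\exp(-O\epsilon/4\eta)$, and through the normalizing constant, to land on the two-sided inequalities in Definition~\ref{dfn::sfty} with the stated constants — in particular tracking how $\eta$, $\tau$, and the dimension $d$ appear. I would isolate this as a self-contained lemma ("a sufficient condition for safety").

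Granting that lemma, the event $\{\SM(\hat\nu)\leq 1\}$ implies that either $\tilde S_{n,3}(\mu)$ or $\tilde S_{n,3}(\sigma)$ exceeds the relevant threshold (of order $\eta/(\tau\vee 1)$ times constants), or $\inf_u\hat\sigma_u< c_1/2$. By a union bound, Condition~\ref{cond::LS_Bound} applied to $T=\mu$ and $T=\sigma$ with $k=3$ and $t\asymp c_2\eta/(\tau\vee 1)$ gives the term $a_1\exp(-a_2 c_2 n\eta/(4(\tau\vee1)))+\kappa_n$ (absorbing the factor $2$ into $C$), and Condition~\ref{cond::Pop_Bound} applied to $T=\sigma$ at $t=c_1/4$ gives $\zeta_{n,\sigma}(c_1/4)$ (since $\{\inf_u\hat\sigma_u<c_1/2\}\subseteq\{S_n(\sigma)\geq c_1/4\}$ using $\sigma_u\geq c_1$). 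Combining with the $e^\epsilon\delta$ term from the test noise and collecting all absolute constants into a single $C$ yields the claimed bound. The only remaining bookkeeping is checking the threshold choices are mutually consistent with the displayed constraints on $\tau$, $\eta$, $\delta$ — which is precisely what those hypotheses are calibrated to ensure.
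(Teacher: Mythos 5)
Your proposal follows essentially the same route as the paper. What you write as steps (1)--(2) is exactly what the paper packages into Lemma~\ref{lem::gen_bound_no_reply}, which decomposes the failure probability as $\Prr{\SM(\hat\nu)<k}+\Prr{W<\log(1/2\delta)-\epsilon k/2}$ over a choice of $k$ (the proof then fixes $k=2$), and the ``deterministic sufficient condition for safety'' you correctly isolate as the crux is precisely Lemma~\ref{thm::safety_margin_lower_bound}, itself built on Lemma~\ref{lemm::safe_cond} (a two-sided ratio inequality for the exponential mechanism under a small uniform perturbation of the cost $\phi$ on the truncation region) and Lemma~\ref{lem:vr_bound} (the volume-ratio bound you anticipate, which is where the $d\log(\cdot)$ term and the $\tau>4\eta+16c_3/c_1$ and $\delta$-lower-bound hypotheses enter).

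Two places where the sketch is a bit coarser than what actually makes the constants come out. First, the volume-ratio step requires controlling not only $\inf_u\hat\sigma_u$ from below, but also $\hat\alpha_\mu=\sup_u\hat\mu_u-\inf_u\hat\mu_u\lesssim c_3$ and $\sup_u\hat\sigma_u\lesssim c_2$; the paper introduces these as events $E_3,E_4$ and bounds their complements by $\zeta_{n,\mu}(c_3)$ and $\zeta_{n,\sigma}(c_2)$ via Conditions~\ref{cond::Pop_Bound}--\ref{cond::bounded-parameters}. Second, the local $3$-sensitivity of the cost $O(\cdot,\hat\nu)$ is not $\tilde S_{n,3}(\mu)$ or $\tilde S_{n,3}(\sigma)$ directly, but the quantity $\Delta_k\leq(\tau\tilde S_{n,k}(\sigma)+\tilde S_{n,k}(\mu))/\inf_u b_{k,u}^-$ from Lemma~\ref{lem::ls_out}, where the denominator $b_{k,u}^-=\inf_{\tilde\nu\in\{\hat\nu_u\}\cup\am{\hat\nu_u}{k}}\sigma(\tilde\nu)$ infimizes over contaminated projections as well; controlling that event uses Condition~\ref{cond::LS_Bound} jointly with Condition~\ref{cond::Pop_Bound}, which is why $\zeta_{n,\sigma}(c_1/4)$ rather than $\zeta_{n,\sigma}(c_1/2)$ appears in the conclusion. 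These are calibration details rather than conceptual gaps, but they are exactly what pins down the displayed hypotheses.
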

The proof of Theorem~\ref{thm::no-reply} is given in Appendix~\ref{app:proofs}. 
To interpret Theorem~\ref{thm::no-reply} note that, given robust $\mu$ and $\sigma$ in the sense of Condition~\ref{cond::LS_Bound}, the probability of failing the test in the mechanism is governed by  the ratio $a_{2}\eta/\tau$ and $\delta$ (recalling that $e^\epsilon\approx 1$).
Next, we show that the mechanism will be accurate with high probability. 
The following theorem bounds the deviation of the private projection-depth-based median around its population value. 
\begin{theorem}\label{thm::pd_acc2}
Suppose Conditions~\ref{cond::Pop_Bound} and~\ref{cond::bounded-parameters} hold. 
Then, for all $n,d\geq 1$, $\epsilon,\eta>0$, $\delta\leq 1/2$, $\gamma~\in~(3(\zeta_{n,\sigma}(c_1/2)\vee \zeta_{n,\mu}(c_3))\vee 3(1-\E{}{Z}),1]$ and $\tau\geq 20(2c_3/(10c_2 - c_1)+c_3/c_1)$ with probability $1-\gamma$, it holds that 
\begin{equation*}
    \norm{\tilde{\theta}-\theta}\lesssim \frac{ c_2\eta}{\epsilon}\left[\log(1/\gamma)\vee d\log\left(\frac{2c_2\tau}{c_3}+1 \right)\vee\frac{c_3}{c_1}\right] \vee \frac{c_2}{c_1}\zeta_{n,\mu}^{-1}(\gamma/3) .
\end{equation*}
\end{theorem}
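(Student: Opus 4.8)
The plan is to decompose the error $\norm{\tilde\theta-\theta}$ into two pieces: the cost of privacy (the noise added by drawing from $\EM_{\hat\nu}$ rather than taking the true sample minimizer) and the sampling error (the distance between the sample projection-depth median and the population value $\theta$). For the second piece, I would use Condition~\ref{cond::Pop_Bound} together with Condition~\ref{cond::bounded-parameters}: a standard argument for projection-depth medians shows that if $S_n(\mu)$ and $S_n(\sigma)$ are small and the population scales $\sigma_u$ are bounded away from $0$ and $\infty$, then $O(\cdot,\hat\nu)$ and $O(\cdot,\nu)$ are uniformly close on a neighborhood of $\theta$, which forces $\argmin O(\cdot,\hat\nu)$ to lie near $\theta$. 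Quantitatively, on the event $\{S_n(\mu)\le c_3,\ S_n(\sigma)\le c_1/2\}$ — which has probability at least $1-\zeta_{n,\mu}(c_3)-\zeta_{n,\sigma}(c_1/2)$ up to constants — one gets $\norm{\hat\theta-\theta}\lesssim (c_2/c_1)\,\zeta_{n,\mu}^{-1}(\gamma/3)$, where $\hat\theta$ is a sample minimizer of the outlyingness; this is the source of the last term in the bound. The role of $\tau\geq 20(2c_3/(10c_2-c_1)+c_3/c_1)$ is to guarantee that $A_{\tau,\hat\nu}$ contains a full neighborhood of $\hat\theta$ (indeed of $\theta$), so that truncation to $A_{\tau,\hat\nu}$ does not distort the minimizer.

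For the privacy cost, condition on $Z=1$ (which happens with probability $\E{}{Z}$, and the slack $3(1-\E{}{Z})<\gamma$ is budgeted for in the statement), so that $\tilde\theta=Y\sim\EM_{\hat\nu}$. I would run the usual exponential-mechanism utility argument: the density is proportional to $\exp(-O(x,\hat\nu)\epsilon/4\eta)$ restricted to $A_{\tau,\hat\nu}$, and $O(\cdot,\hat\nu)$ behaves, near its minimizer $\hat\theta$, like a norm rescaled by the local scale, so that $O(x,\hat\nu)-O(\hat\theta,\hat\nu)\gtrsim \norm{x-\hat\theta}/c_2$ on the relevant region (using again $\sup_u\sigma_u\le c_2$ via Condition~\ref{cond::bounded-parameters} and the closeness of $\hat\sigma_u$ to $\sigma_u$). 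A ball of radius $r$ around $\hat\theta$ of sufficiently small $r$ lies in $A_{\tau,\hat\nu}$ and carries mass $\gtrsim (r\epsilon/(c_2\eta))^d$ relative to the normalizing constant, while the mass at distance $\ge t$ decays like $\exp(-t\epsilon/(4c_2\eta))$ times the volume factor. Integrating the tail and setting it equal to $\gamma/3$ yields
$$\norm{Y-\hat\theta}\lesssim \frac{c_2\eta}{\epsilon}\left[\log(1/\gamma)\vee d\log\!\left(\frac{2c_2\tau}{c_3}+1\right)\vee \frac{c_3}{c_1}\right],$$
where the $d\log(2c_2\tau/c_3+1)$ term comes from the covering/volume ratio between the diameter of $A_{\tau,\hat\nu}$ (bounded in terms of $\tau$, $c_2$, $c_3$, $c_1$ using Condition~\ref{cond::bounded-parameters}) and the radius of the good inner ball, and the $c_3/c_1$ term accounts for the minimal achievable outlyingness $O(\hat\theta,\hat\nu)$ itself.

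Combining the two pieces via the triangle inequality $\norm{\tilde\theta-\theta}\le\norm{Y-\hat\theta}+\norm{\hat\theta-\theta}$, and noting that all three failure events (test fails, $S_n(\mu)$ too large, $S_n(\sigma)$ too large, and the exponential-mechanism tail) each cost at most $\gamma/3$, gives the claimed high-probability bound. I expect the main obstacle to be the quadratic-vs-linear growth bookkeeping for $O(\cdot,\hat\nu)$ near its minimum: one must show a genuine \emph{lower} bound of the form $O(x,\hat\nu)\ge O(\hat\theta,\hat\nu)+\norm{x-\hat\theta}/(Cc_2)$ valid on a region large enough to matter, which requires controlling $\hat\mu_u$ and $\hat\sigma_u$ uniformly in $u$ (hence Condition~\ref{cond::Pop_Bound}) and carefully choosing the radius so that the ball sits inside $A_{\tau,\hat\nu}$; the interplay of the constants $c_1,c_2,c_3$ with the threshold on $\tau$ is exactly what makes this delicate. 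A secondary subtlety is handling the case where $\argmin O(\cdot,\hat\nu)$ is not a singleton, which is dealt with by the averaging in the definition of $\theta$ and does not affect the bound.
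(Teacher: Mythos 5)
Your high-level plan — split $\norm{\tilde\theta-\theta}\le\norm{Y-\hat\theta}+\norm{\hat\theta-\theta}$ and handle the exponential-mechanism cost and the sampling error as two separate lemmas — is a genuinely different organization of the argument from the one the paper uses, though the underlying technical estimates largely coincide. The paper never introduces the sample minimizer $\hat\theta$ at all: Lemma~\ref{lem::pd_acc2} conditions on the events $\{S_n(\mu)\le s\}$, $\{c_1/2 \le \inf_u\hat\sigma_u\le\sup_u\hat\sigma_u\le 2c_2\}$, $\{\hat\alpha_\mu\le 4c_3\}$ and on that event derives the two inequalities $O(x,\hat\nu)\ge(\norm{x-\theta}-c_3-s)/(2c_2)$ and $O(x,\hat\nu)\le(\norm{x-\theta}+c_3+s)/(c_1/2)$, hence the ball inclusions $B_{c_1\tau/2-s-c_3}(\theta)\subset A_{\tau,\hat\nu}\subset B_{2c_2\tau+s+c_3}(\theta)$, and then bounds $\Pr(\norm{\tilde\theta-\theta}\ge t\mid \tilde\theta\neq\perp)$ directly as a ratio of integrals over those balls, optimizing over $s$ and a second free radius $r$. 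This is cleaner than your route precisely at the point you flag as the ``main obstacle'': you want a lower bound of the form $O(x,\hat\nu)-O(\hat\theta,\hat\nu)\gtrsim\norm{x-\hat\theta}/c_2$, but that bound holds only after subtracting an additive penalty of order $O(\hat\theta,\hat\nu)\lesssim(c_3+s)/c_1$, so you must first locate $\hat\theta$ near $\theta$ (itself a consequence of the two $\theta$-centered bounds) and then bootstrap — a circularity the paper avoids by never leaving the $\theta$-centered world. Concretely, the $c_3/c_1$ contribution enters the paper's proof through the quantity $\hat\alpha_\mu$ appearing in the volume-ratio bound and the upper bound on $O$, not through an explicit ``minimal empirical outlyingness'' term. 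Two bookkeeping items to repair: (i) you budget four failure events at $\gamma/3$ each, which exceeds $\gamma$; the paper uses exactly three shares — the test failing, the sample-deviation events jointly, and the exponential-mechanism tail — and the theorem's lower bound on admissible $\gamma$ is designed for that split; and (ii) the role of the threshold on $\tau$ is not that it prevents truncation from moving the minimizer, but that it guarantees the inner ball $B_{c_1\tau/2-s-c_3}(\theta)$ is nonempty for the optimizing choices $r\propto t$, $s\propto t$, so the denominator of the integral ratio is bounded away from zero.
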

The proof of Theorem~\ref{thm::pd_acc2} is given in Appendix~\ref{app:proofs}. 
Theorem~\ref{thm::pd_acc2} shows that, provided $\eta$ is relatively small and $\E{}{Z}$ is large enough, the cost of privacy for privately estimating the projection-depth-based median is governed by $\eta(\log(1/\gamma)\vee d)/\epsilon$. 
Together, Theorem~\ref{thm::no-reply} and Theorem~\ref{thm::pd_acc2} quantify the trade-off inherent in $\eta$. 
Smaller $\eta$ improves accuracy at the cost of a higher probability of failing the test. 
In general, it is desirable for the cost of privacy to be less than the sampling error, $c_2\zeta_{n,\mu}^{-1}(\gamma/3)/c_1$. 
We find this to be the case in the examples explored in the next section for robust choices of $\mu$ and $\sigma$. 
Indeed, for such choices, in the following examples, we have that $\kappa_n$ and $\zeta_{n,\sigma}(c_1/2)\vee \zeta_{n,\mu}(c_3)$ are exponentially small in $n$, as well as $\eta$ can be taken to be $O(c_d/n)$, where $c_d$ is a term which is at most polynomial in the dimension, $d$. 
This results in a failure probability $\lesssim \delta e^{\epsilon}$ and a cost of privacy which is less than the sampling error.

\begin{remark}
It is popular to consider the error of an estimator with respect to the Mahalanobis metric, e.g. \citep{Brown2021,Depersin2022}, as it accounts for the differing scales across different directions. 
However, we aim to develop a general theory which includes population measures for which the Mahalanobis metric may not be suitable. 
As a result, we develop results based on the Euclidean metric. 
However, \citet{2021Liu} and \citet{Depersin2022} have considered the behavior of different subclasses of projection-depth-based medians (both privately and non-privately, respectively). 
Their results show good performance of these projection-depth-based medians, i.e., under their assumptions, they achieve sub-Gaussian rates. 
Furthermore, if we restrict to the median of means version of the projection-depth-based median and the assumptions of \citet{Depersin2022}, one can straightforwardly substitute the techniques of \citet{Depersin2022} into the proof of Theorem \ref{thm::pd_acc2} where appropriate, which would give the rate of convergence of the median of means version of the private projection-depth-based median under the Mahalanobis metric. 
\end{remark}

Before concluding this section, we present a result concerning the robustness of the private projection-depth-based median. 
The finite sample breakdown point of an estimator is the maximum fraction of points which can be set to arbitrary values, under which the estimator lies in the interior of the parameter space \citep{Donoho1983}. 
For a set $A$, let $\interior(A)$ denote the interior of $A$. 
Formally, for an estimator $T$ of parameter $\theta\in\Theta$, the finite sample breakdown point of $T$ with respect to $\hat\nu$ is
$$\BP(T,\hat\nu)=\sup\{\frac{m}{n}\colon \forall \tilde\nu\in\cM_1(\hat\nu,m),\ T(\tilde\nu)\in \interior(\Theta)\}.$$
Given $\hat\nu\in\widehat\cM_{1}(n,d)$, define
\begin{multline*}
\BP(\EM,\hat\nu)=\sup\{\frac{m}{n}\colon \forall \tilde\nu\in\cM_1(\hat\nu,m),\exists\ C_{\hat\nu,\alpha,\tau}<\infty \\
\text{such that}\EM_{\tilde\nu}(\{\norm{X}<C_{\hat\nu,\alpha,\tau}\})=1\}.
\end{multline*}
$\BP(\EM,\hat\nu)$ is the maximum fraction of points which can be set to arbitrary values, under which the estimate is bounded with probability 1. (Note that if $\norm{\tilde\theta}<\infty$, then $\tilde\theta\in\interior(\Theta)$.)  
We now present the following result, pertaining to the robustness of the private projection-depth-based median. 
\begin{theorem}\label{thm::bdp}
For all $0<\eta,\delta,\epsilon,\tau<\infty$ it holds that $\BP(\EM,\hat\nu)\geq \BP(\mu,\hat\nu)\wedge \BP(\sigma,\hat\nu).$
\end{theorem}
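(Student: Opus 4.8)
The plan is to show that if fewer than $n\cdot(\BP(\mu,\hat\nu)\wedge\BP(\sigma,\hat\nu))$ points are corrupted, then the exponential mechanism $\EM_{\tilde\nu}$ places all its mass on a bounded set, with a bound depending only on $\hat\nu,\alpha,\tau$ (and not on the corrupted values). The key observation is that the support of $\EM_{\tilde\nu}$ is exactly $A_{\tau,\tilde\nu}=\{x\colon O(x,\tilde\nu)\leq\tau\}$, so it suffices to show this sublevel set of the outlyingness is bounded whenever $\mu$ and $\sigma$ have not broken down on the corrupted sample.

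First I would fix $m<n\cdot(\BP(\mu,\hat\nu)\wedge\BP(\sigma,\hat\nu))$ and an arbitrary $\tilde\nu\in\cM_1(\hat\nu,m)$. By definition of the breakdown point applied to the univariate functionals: since $\mu$ is translation and scale equivariant and does not break down, there is a finite constant $M_\mu=M_\mu(\hat\nu,m)$ with $\sup_{\ubsd}|\mu(\tilde\nu_u)|\leq M_\mu$ (one must check that corrupting $m$ rows of the $d$-dimensional dataset corrupts at most $m$ values of each projected one-dimensional dataset $\tilde\nu_u$, which is immediate since projection is applied rowwise). Similarly, $\sigma$ not breaking down gives a finite upper bound $\sup_{\ubsd}\sigma(\tilde\nu_u)\leq M_\sigma^+$; the more delicate half is a \emph{strictly positive lower bound} $\inf_{\ubsd}\sigma(\tilde\nu_u)\geq M_\sigma^->0$. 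This lower bound is where I expect the main obstacle to lie: breakdown as usually defined controls the estimator going to the boundary of the parameter space, and for a scale estimator the boundary is $\{0,\infty\}$, so one has to argue that "not breaking down" genuinely precludes $\sigma(\tilde\nu_u)\to 0$ as well, uniformly over $u$. I would handle this either by invoking the convention (standard in the projection-depth literature, cf.\ \citep{Zuo2003,Zuo2004}) that the scale functional's parameter space is $(0,\infty)$ so that $0$ is a boundary point, or by noting that for $m<n/2$ a majority of the original points survive in every direction, which bounds $\sigma(\tilde\nu_u)$ away from $0$ for the canonical choices; I would state the needed nondegeneracy as the operative meaning of $\BP(\sigma,\hat\nu)$.

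Given $M_\mu<\infty$ and $0<M_\sigma^-\leq M_\sigma^+<\infty$, the conclusion is a short computation: for any $x$ with $O(x,\tilde\nu)\leq\tau$ and any fixed $u$,
\begin{equation*}
|x^\top u|\leq |\mu(\tilde\nu_u)|+\sigma(\tilde\nu_u)\,O_u(x,\tilde\nu_u)\leq M_\mu+M_\sigma^+\,\tau,
\end{equation*}
and taking the supremum over $\ubsd$ gives $\norm{x}\leq M_\mu+M_\sigma^+\tau=:C_{\hat\nu,\alpha,\tau}<\infty$. Hence $A_{\tau,\tilde\nu}\subseteq\{\norm{X}<C_{\hat\nu,\alpha,\tau}+1\}$, and since $\EM_{\tilde\nu}$ is supported on $A_{\tau,\tilde\nu}$ (when this set has positive Lebesgue measure; if it is empty or null the mechanism is degenerate and the claim is vacuous for that $\tilde\nu$, or one adopts the convention that nothing is released), we get $\EM_{\tilde\nu}(\{\norm{X}<C_{\hat\nu,\alpha,\tau}\})=1$ after relabeling the constant. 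Taking the supremum over admissible $m$ yields $\BP(\EM,\hat\nu)\geq\BP(\mu,\hat\nu)\wedge\BP(\sigma,\hat\nu)$. I would close by remarking that no use was made of $\epsilon,\delta,\eta$, which is why the bound holds for all positive values of those parameters as stated.
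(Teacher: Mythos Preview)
Your proposal is correct and follows essentially the same route as the paper: reduce to showing that the support $A_{\tau,\tilde\nu}=\{x:O(x,\tilde\nu)\le\tau\}$ of $\EM_{\tilde\nu}$ is bounded whenever fewer than $n(\BP(\mu,\hat\nu)\wedge\BP(\sigma,\hat\nu))$ points are corrupted, then obtain a uniform-in-$u$ bound on $|x^\top u|$ from the finiteness of $\sup_u|\mu(\tilde\nu_u)|$ and $\sup_u\sigma(\tilde\nu_u)$ guaranteed by the breakdown hypotheses. Your treatment is in fact more careful than the paper's on the role of the lower bound $\sigma(\tilde\nu_u)>0$ (needed for the mechanism to be well defined rather than for the boundedness inequality itself), but the core argument is identical.
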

\begin{proof}
Note that by definition $\EM_{\hat\nu}(\{x\colon O(x,\hat\nu)\leq\tau\})=1$. 
Therefore, it suffices to show that the set $\{x\colon O(x,\tilde\nu)\leq\tau\}$ is bounded for all $\tilde\nu\in\cM_1(\hat\nu,k)$, where $k<n(\BP(\mu)\wedge \BP(\sigma))$. 
Consider $\tilde\nu\in\cM_1(\hat\nu,k)$, where $k<n(\BP(\mu)\wedge \BP(\sigma))$. 
We have that $$\{x\colon O(x,\tilde\nu)\leq\tau\}\subset \{x\colon \tau \inf_u\sigma(\tilde\nu_u)-\sup_u\mu(\tilde\nu_u)\leq x^\top u\leq \tau \sup_u\sigma(\tilde\nu_u)+\sup_u\mu(\tilde\nu_u)\}.$$ This set is bounded, provided that $\mu(\tilde\nu_u)$ and $\sigma(\tilde\nu_u)$ are bounded and $\sigma(\tilde\nu_u)>0$. Furthermore, $\mu(\tilde\nu_u)$ and $\sigma(\tilde\nu_u)$ are bounded and $\sigma(\tilde\nu_u)>0$ follows from the fact that $\tau<\infty$, the condition on $k$ and the definition of the breakdown point. 
\end{proof}
Therefore, if the data is contaminated such that $\mu$ and $\sigma$ do not break down, then Theorem \ref{thm::bdp} implies that the differentially private projection-depth-based median will also not breakdown.

\subsection{Verifying Condition~\ref{cond::LS_Bound} and example applications}\label{sec::examples}
We next verify Condition~\ref{cond::LS_Bound} for several choices of $\mu$ and $\sigma$, before applying Theorem~\ref{thm::no-reply} and Theorem~\ref{thm::pd_acc2} for Gaussian and Cauchy population measures. 
We first introduce some new notation. 
For $\nu\in \cM_1(\rdd)$, let $F_{\nu}$ be the cumulative distribution function and $f_{\nu}$ be the Radon--Nikodym derivative of $\nu$. 
If $d=1$, let $F^{-1}_{\nu}$ be the left-continuous quantile function and let $\xi_{q,u}=F^{-1}_{\nu_u}(q)$. 
We now define the univariate estimators considered in this section. 
First, we have the trimmed mean and trimmed mean absolute deviation:
\begin{definition}
Denote the $i$th order statistic of a sample, $\{X_1,\dots,X_n\}$, as $X_{(i)}$. Then, for $0<\alpha<1/2$ and $\sam{X}{n}\in\bD_{n\times 1}$, the $\alpha$-trimmed mean and $\alpha$-trimmed mean absolute deviation is given by:
    \begin{equation*}
       \tm_\alpha(\hat\nu_{\sam{X}{n}})= \sum_{i=\floor{n\alpha}}^{n-\floor{n\alpha}}\frac{X_{(i)}}{n(1-2\alpha)}\andd\tad_\alpha(\hat\nu_{\sam{X}{n}})= \sum_{i=\floor{n\alpha}}^{n-\floor{n\alpha}}\frac{|X_{(i)}- \tm_\alpha(\hat\nu_{\sam{X}{n}})|}{n(1-2\alpha)}.
    \end{equation*}
\end{definition}
We could also consider the trimmed standard deviation, as in \cite{2021Liu}, but the trimmed mean absolute deviation is more convenient mathematically. 
We next define the canonical estimators used in the projected outlyingness, the median and median absolute deviation. 
For a dataset $\sam{X}{n}\in\bD_{n\times 1}$ and a real function $g$, define $g(\sam{X}{n})=\{g(X_1),\ldots,g(X_n)\}$. 
\begin{definition}
Given $\sam{X}{n}\in\bD_{n\times 1}$, the median is given by $\med(\hat\nu_{\sam{X}{n}})=\tm_{1/2-1/2n}(\hat\nu_{\sam{X}{n}})$ and the median absolute deviation is given by $\mad(\hat\nu_{\sam{X}{n}})=\tm_{1/2-1/2n}(\hat\nu_{|\sam{X}{n}-\med(\hat\nu_{\sam{X}{n}})|})$.
\end{definition}
For these estimators, Condition~\ref{cond::LS_Bound} reduces to the following condition on the pair $(q,\nu)$, for a certain $0<q\leq 1/2$. 
\begin{condition}\label{cond::dens_lb}
The measure $\nu$ is absolutely continuous and there exists $r,M>0$ such that, for the interval $\mathcal{I} = [\xi_{q-3/n,u}-r, \xi_{1-q+3/n,u}+r]$, we have $\inf_{\substack{x\in\mathcal{I}; \ubsd}}f_{\nu_u}(x)>M$.
\end{condition}
For a given quantile, Condition~\ref{cond::dens_lb} requires that the density of the projected population measure is bounded below in some interval which contains $(\xi_{q-3/n,u}, \xi_{1-q+3/n,u})$. 
For $q=1/2$, Condition~\ref{cond::dens_lb} is a multivariate extension of the conditions imposed on the univariate PTR median \citep{Brunel2020} and the univariate median of \citet{zamos2020}. 
Under Condition~\ref{cond::dens_lb}, the trimmed estimators, as well as the median and median absolute deviation satisfy Condition~\ref{cond::LS_Bound}.
\begin{theorem}\label{thm::trimmed}
For all $0<\alpha<1/2$ if Condition~\ref{cond::dens_lb} holds for $(\alpha,\nu)$ then there exists universal constants $c,c'>0$ such that the trimmed mean and trimmed absolute deviation satisfy Condition~\ref{cond::LS_Bound} with $a_{1}=\exp({d+\log(n(1-2\alpha))})$, $a_{2}=cMn(1-2\alpha)$ and $\kappa_{n}=\exp({-nr^2M^2+d\log (c'n/d)}).$
\end{theorem}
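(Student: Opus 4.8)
The plan is to establish the tail bound for $\tilde S_{n,k}(T)$ directly for $T = \tm_\alpha$ and then reduce $T = \tad_\alpha$ to it. Fix $u \in \bS^{d-1}$ and a contaminated measure $\tilde\nu \in \am{\hat\nu}{k}$ with $k \le 3$; write $Y_i = X_i^\top u$ for the projected sample and let $Y_{(1)} \le \dots \le Y_{(n)}$ be the projected order statistics. The first key observation is that changing $k \le 3$ of the $Y_i$ can move $\tm_\alpha(\tilde\nu_u)$ by at most a sum of order-statistic spacings: since the trimmed mean is an average of the central $n(1-2\alpha)$ order statistics, replacing $k$ points shifts each retained term by at most the distance from $Y_{(\lfloor n\alpha\rfloor - k)}$ to $Y_{(\lfloor n\alpha\rfloor + k)}$ on the left, and symmetrically on the right. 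Hence I would bound
$$
\tilde S_{n,k}(\tm_\alpha) \;\lesssim\; \frac{1}{n(1-2\alpha)}\sum_{j} \bigl(Y_{(i_j+1)} - Y_{(i_j)}\bigr)
$$
for a bounded (in terms of $k\le 3$) collection of indices $i_j$ lying near the trimming cutoffs $n\alpha$ and $n(1-\alpha)$, uniformly over $u$. So it suffices to control the \emph{projected spacings} of order statistics near the $\alpha$- and $(1-\alpha)$-quantiles, uniformly in $u$ — this is exactly what Lemma~\ref{thm::order_spacing_new_uniform} is designed to supply.

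The second step is the probabilistic estimate: under Condition~\ref{cond::dens_lb} the density $f_{\nu_u}$ is bounded below by $M$ on an interval $\mathcal I$ that contains a neighborhood of the relevant quantiles (the $-3/n$ and $+3/n$ buffers in $\mathcal I$ are precisely there to absorb the index shifts from $k\le 3$ contaminated points). On such an interval a spacing $Y_{(i+1)} - Y_{(i)}$ behaves like an exponential with rate $\asymp nM$, so $\Pr(Y_{(i+1)} - Y_{(i)} \ge t) \lesssim \exp(-c n M t)$ for each fixed index and direction, provided the order statistic itself falls inside $\mathcal I$. The event that some $Y_{(i)}$ near the cutoff escapes $\mathcal I$ is a large-deviation event for a binomial count, contributing $\exp(-nr^2M^2)$; combined with a uniform-in-$u$ (VC / covering) argument over $\bS^{d-1}$ this yields the additive $\kappa_n = \exp(-nr^2M^2 + d\log(c'n/d))$ term, and taking a union bound over the $O(n)$ relevant indices produces the $\exp(d + \log(n(1-2\alpha)))$ prefactor $a_1$ with rate $a_2 = cMn(1-2\alpha)$. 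I would lift the per-direction spacing bound to a supremum over $u$ by the same device used in Lemma~\ref{thm::order_spacing_new_uniform}, so this is mostly a matter of invoking that lemma with the quantile levels $q = \alpha$.

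For the trimmed mean absolute deviation, I would write $\tad_\alpha(\tilde\nu_u) = \tm_\alpha\bigl(\tilde\nu_{|u^\top X - \tm_\alpha(\tilde\nu_u)|}\bigr)$ and use the triangle inequality together with the first part: its sensitivity is controlled by (i) the sensitivity of the inner centering $\tm_\alpha(\tilde\nu_u)$, already bounded, and (ii) the sensitivity of a trimmed mean of the transformed sample $|Y_i - \tm_\alpha|$, whose order statistics are just a rearrangement of reflected spacings of the $Y_i$ — so the same spacing bound applies, at the cost of absolute constants. The map $x \mapsto |x - c|$ is $1$-Lipschitz, so the transformed spacings are dominated by original spacings and Condition~\ref{cond::dens_lb} transfers. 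Adjusting $c, c'$ absorbs these reductions.

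The main obstacle I anticipate is the uniformity over $u \in \bS^{d-1}$: a fixed-direction spacing bound is routine, but making it simultaneous over the sphere without losing more than the stated $d\log(c'n/d)$ factor requires care, since naively covering $\bS^{d-1}$ at scale $1/\mathrm{poly}(n)$ and controlling how projected order statistics vary across the net is delicate (the index-to-quantile correspondence is not Lipschitz in $u$). This is precisely why the paper isolates Lemma~\ref{thm::order_spacing_new_uniform} as a standalone result; my plan is to defer that difficulty to it and treat Theorem~\ref{thm::trimmed} as a bookkeeping exercise in combining the spacing lemma, the binomial large-deviation bound for quantile localization, and the two reductions above.
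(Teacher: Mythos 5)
Your high-level plan — reduce to a statement about projected order-statistic spacings, invoke Lemma~\ref{thm::order_spacing_new_uniform}, and handle $\tad_\alpha$ by reducing to $\tm_\alpha$ — is the paper's plan, and your reading of the $\pm 3/n$ buffer in Condition~\ref{cond::dens_lb} and the $\mathrm{Beta}(1,n)$ spacing heuristic is correct. However, both reduction steps as you have stated them contain genuine gaps.

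\emph{Sensitivity bound for $\tm_\alpha$.} When $k$ observations are replaced, the $i$-th order statistic of the contaminated sample $Z_{(i)}$ satisfies $X_{(i-k)}\le Z_{(i)}\le X_{(i+k)}$ — a local window around \emph{position $i$ itself}, not around the trimming cutoffs. Thus each of the $\approx n(1-2\alpha)$ retained terms shifts by a local $2k$-gap spacing at its \emph{own} index, and
$$
|\tm_\alpha(\tilde\nu_u)-\tm_\alpha(\hat\nu_u)|\le \frac{1}{n(1-2\alpha)}\sum_{i=\lfloor n\alpha\rfloor}^{n-\lfloor n\alpha\rfloor}\bigl(X_{(i+k)}-X_{(i-k)}\bigr),
$$
which has $\approx n(1-2\alpha)$ terms and telescopes to $O\bigl(k\,(X_{(n-\lfloor n\alpha\rfloor+k)}-X_{(\lfloor n\alpha\rfloor-k)})\bigr)$, the full span of the retained range. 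Concretely, deleting $Y_{(\lfloor n\alpha\rfloor)}$ and inserting a point above $Y_{(n-\lfloor n\alpha\rfloor)}$ changes $\tm_\alpha$ by exactly $(Y_{(n-\lfloor n\alpha\rfloor+1)}-Y_{(\lfloor n\alpha\rfloor)})/(n(1-2\alpha))$, a span over $n(1-2\alpha)$, not a bounded sum of adjacent spacings near the cutoffs over $n(1-2\alpha)$. Your display underestimates the sensitivity by roughly a factor of $n$. The paper's proof bounds $\tilde S_{n,k}(\tm_\alpha)$ by a span of precisely this form, $\max_{0\le i\le k+1}\bigl(X_{(n-\alpha n+1+k-i)}-X_{(\alpha n+1-i)}\bigr)/(n-2\alpha n)$, and it is that span which is fed into Lemma~\ref{thm::order_spacing_new_uniform}.

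\emph{Reduction for $\tad_\alpha$.} The identity $\tad_\alpha(\tilde\nu_u)=\tm_\alpha\bigl(\tilde\nu_{|u^\top X-\tm_\alpha(\tilde\nu_u)|}\bigr)$ is false. By definition $\tad_\alpha(\hat\nu)=\frac{1}{n(1-2\alpha)}\sum_{i=\lfloor n\alpha\rfloor}^{n-\lfloor n\alpha\rfloor}|X_{(i)}-\tm_\alpha(\hat\nu)|$: the trimming is done in the \emph{original} ordering of the $X_{(i)}$'s, and the absolute deviations of those \emph{same} retained observations are averaged. A trimmed mean of the absolute-deviation sample would re-sort by deviation magnitude before trimming, which is the construction used for $\mad$ ($\mad=\tm_{1/2-1/2n}(\hat\nu_{|X-\med|})$) but not for $\tad_\alpha$. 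The Lipschitz-composition shortcut you propose is therefore not available. The paper instead fixes the retained index sets $A_{1,u}$ and $A_{2,u}$ for $\hat\nu$ and $\tilde\nu$, uses that they differ in at most $k$ indices, and bounds the contribution of those few indices separately while using $|\hat\mu_u-\tilde\mu_u|\le\eta$ (from the $\tm_\alpha$ part) to control the change of centering on the common indices; some such direct comparison of retained sets is needed here.
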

\begin{theorem}\label{thm::order_cond_1}
Suppose there exists $0<q'<q<1/4$ such that both $6<n(q-q')$ and Condition~\ref{cond::dens_lb} holds for $(q',\nu)$. 
Then the median absolute deviation satisfies Condition~\ref{cond::LS_Bound} with $a_{1}\lesssim n\exp(d)$, $a_{2}=3M/24$ and $\kappa_{n}=\exp({-nr^2M^2+d\log (cn/d)}).$
\end{theorem}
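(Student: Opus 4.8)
The plan is to bound the projected sensitivity $\tilde S_{n,k}(\mad)$, $k\le 3$, by a sum of a few spacings of projected order statistics and then to invoke Lemma~\ref{thm::order_spacing_new_uniform}. Fix $u\in\bS^{d-1}$ and $k\le 3$; any $\tilde\nu\in\am{\hat\nu}{k}$ projects to a univariate sample differing from $\{X_i^\top u\}_{i=1}^n$ in at most $k$ entries. Write $c_0=\med(\hat\nu_u)$, $c_1=\med(\tilde\nu_u)$, and recall that on a projected sample $\mad$ is the $(\tfrac12-\tfrac1{2n})$-trimmed mean of the folded sample $\{\,|X_i^\top u-c_0|\,\}_{i=1}^n$, i.e.\ the average of its one or two central order statistics. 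Interpolating through the intermediate sample $\{\,|X_i^\top u-c_1|\,\}_{i=1}^n$, I would split $|\mad(\hat\nu_u)-\mad(\tilde\nu_u)|$ into: (i) the effect of moving the fold centre from $c_0$ to $c_1$, which perturbs every folded value by at most $|c_0-c_1|$, hence --- a trimmed mean being $1$-Lipschitz in the $\ell_\infty$ perturbation of its inputs --- moves the statistic by at most $|c_0-c_1|$; and (ii) the effect of resetting the $\le k$ corrupted folded values, which moves the central order statistic by at most the spacing between the folded order statistics about $2k$ indices apart that bracket the central position. Taking suprema over $\tilde\nu$ and $u$, term (i) contributes exactly $\tilde S_{n,k}(\med)$, a spacing near the $\tfrac12$-quantile that is controlled just as in the median case ($\alpha=\tfrac12-\tfrac1{2n}$) of Theorem~\ref{thm::trimmed}, noting that Condition~\ref{cond::dens_lb} for $(q',\nu)$ with $q'<1/4$ implies it for $(\tfrac12-\tfrac1{2n},\nu)$ with the same $M,r$.

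For term (ii) I would pass from folded spacings to original-scale spacings via the reverse triangle inequality $\bigl|\,|a-c_1|-|b-c_1|\,\bigr|\le|a-b|$: the folded order statistics near the MAD arise from $X^\top u$-values near the two ``fold images'' $c_1\pm m$, $m:=\mad(\nu_u)$, which in the population sit near quantiles $p_1$ and $p_1+\tfrac12$ for some $p_1\le\tfrac12$. The crux is that at least one of these feeding locations lies where Condition~\ref{cond::dens_lb} forces density $\ge M$. For the median itself this is immediate since $q'<\tfrac14<\tfrac12$. For the fold images, the role of $q<1/4$ is that if one of $\xi_{p_1,u},\xi_{p_1+1/2,u}$ exits $[\xi_{q',u},\xi_{1-q',u}]$ then the other is pushed \emph{strictly} inside (e.g.\ $p_1<q'$ forces $p_1+\tfrac12<q'+\tfrac12<1-q'$ using $q'<1/4$), so near its median the folded sample accumulates mass at rate $f_{\nu_u}(c_1-m)+f_{\nu_u}(c_1+m)\ge M$ and its central spacings are governed by original-scale spacings on that good side. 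The buffer $n(q-q')>6$ then lets a uniform-in-$u$ empirical-quantile concentration bound place the relevant \emph{sample} order statistics --- and $c_0,c_1$ themselves, despite the $\le k$ corruptions --- inside the density-$\ge M$ interval $\mathcal{I}$ of Condition~\ref{cond::dens_lb}, except on an event of probability $\le\kappa_n=\exp(-nr^2M^2+d\log(cn/d))$; the $r$-slack in $\mathcal{I}$ and a $(cn/d)^d$ covering of $\bS^{d-1}$ are precisely what produce this $\kappa_n$.

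On that event, Lemma~\ref{thm::order_spacing_new_uniform} supplies, uniformly over $u\in\bS^{d-1}$, an exponential tail $\Pr(\text{spacing}\ge t)\lesssim n e^{d}e^{-c'Mnt}$ for each of the $O(1)$ original-scale spacings appearing in (i) and (ii). Summing these contributions, absorbing the $k\le 3$, the $3/n$ margins and the constant $c'$ into the rate, and combining with the $\kappa_n$ term gives Condition~\ref{cond::LS_Bound} for $\mad$ with $a_1\lesssim n\exp(d)$, $a_2=3M/24$, and the stated $\kappa_n$.

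The main obstacle is the bookkeeping in term (ii): identifying precisely which original-scale order statistics control the central order statistic of the folded sample $\{\,|X_i^\top u-c_0|\,\}_i$, and proving they are ``inner'' --- which is exactly where $q<1/4$ enters --- uniformly in $u$ and stably with respect both to the $\le k$ corruptions and to the sampling fluctuation of $\med(\hat\nu_u)$ around $\xi_{1/2,u}$. This folding step is what makes the argument heavier than that of Theorem~\ref{thm::trimmed}; the two triangle-inequality reductions and the appeal to Lemma~\ref{thm::order_spacing_new_uniform} are otherwise routine.
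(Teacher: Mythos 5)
Your decomposition --- median movement plus $k$-sensitivity of the median of the folded sample --- is genuinely different from the paper's. The paper's proof (via Lemma~\ref{lem::mad_cond_1}) never works with the folded sample at all. It conditions on the event $E_{10}$ that all projected spacings with ranks in $[q',1-q'-k/n]$ are below $C/2n$ (Lemma~\ref{thm::order_spacing_new_uniform}), then bounds $\sigma(\tilde\nu_u)-\sigma(\hat\nu_u)$ from above by an interval-containment argument (if $[\hat\mu_u-\hat\sigma_u,\hat\mu_u+\hat\sigma_u]\subset[\tilde\mu_u-\hat\sigma_u-C/2n,\tilde\mu_u+\hat\sigma_u+C/2n]$ then the larger symmetric interval still captures $\floor{(n+1)/2}$ points) and from below by a rank-counting argument that the endpoint of the corrupted MAD interval is itself an \emph{original} order statistic at rank $p_u\in(q',1-q')$, which is where $q<1/4$ and the buffer $2k<n(q-q')$ enter. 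Your proposal uses the same key lemma and the same $q<1/4$ insight (at least one fold image must be an inner quantile), but routes it through the folded sample.

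The part you flag as bookkeeping is a real gap, not a routine step, and it is precisely what the paper's interval/rank argument is built to avoid. The reverse triangle inequality $\bigl||a-c_1|-|b-c_1|\bigr|\le|a-b|$ does not translate a folded spacing $|X|_{(m+k)}-|X|_{(m)}$ into a single original-scale spacing, because the folded observations near the central rank interleave points from both sides of the fold: $|X|_{(m)}$ may arise from $c_1-m$ while $|X|_{(m+k)}$ arises from $c_1+m'$, in which case $|a-b|\approx 2\mad$ and the inequality is vacuous. The correct statement is something like: the folded spacing at offset $k$ is bounded by the \emph{minimum} of the two one-sided original spacings (one near each fold image), which requires counting how many observations fall in the two mirror strips $(c_1+|X|_{(m)},\,c_1+|X|_{(m+k)})$ and $(c_1-|X|_{(m+k)},\,c_1-|X|_{(m)})$, and handling the case where the reference point $c_1\pm|X|_{(m)}$ is not itself an observation. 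Lemma~\ref{thm::order_spacing_new_uniform} as stated controls spacings of original order statistics at a given rank, so it does not directly supply this two-sided folded-strip count; you would need an extra argument linking the folded rank $m$ to original ranks on each side (uniformly in $u$, and stably under the $\le k$ corruptions that also move $c_1$). Until that is written out, the outline is plausible but incomplete; the paper's route sidesteps it entirely by never leaving the original sample.
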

\noindent The proofs are deferred to Appendix~\ref{app::ex_pro}, and follow mainly from Lemma~\ref{thm::order_spacing_new_uniform}. 
Theorem~\ref{thm::trimmed} and Theorem~\ref{thm::order_cond_1} can be interpreted as stating that, provided the (univariate) projected densities are bounded below when computed at the inner quantiles, the trimmed estimators and the median absolute deviation have small projected sensitivity with high probability. 
Note that we can prove an analogue of Theorem~\ref{thm::order_cond_1} with the interquartile range in place of the median absolute deviation (see Lemma~\ref{lem::iqr_cond_1}). 
\noindent To illustrate the methodology, we begin with a simple example, taking $\nu$ to be Gaussian.  
\begin{example}[Gaussian data]\label{ex::gaus}
Take the population measure to be $\cN(\theta,\Sigma)$ with $\theta\in\rdd$ and $\Sigma$ as a positive definite symmetric matrix. 
Without loss of generality, we may assume that $\nu = \cN(0,\Lambda)$, where $\Lambda$ is a diagonal matrix with eigenvalues $\beta_1 \leq \cdots \leq \beta_d$ and let $\omega$ be the condition number of $\Lambda$, i.e., $\omega=\beta_d/\beta_1$.
Specifically, the bounds produced by Theorems~\ref{thm::no-reply} and~\ref{thm::pd_acc2} when $\nu = \cN(0,\Lambda)$ are equivalent to those produced by taking $\nu=\cN(\theta,\Sigma)$, where $\Sigma=R\Lambda R^\top$ for some unitary matrix $R$. 
To see this, note that $R\nu + \theta=\cN(\theta,\Sigma)$, where $A\nu+b$ represents the law of $AX+b$ if $X\sim \nu$. 
Given that the outlyingness function is invariant under affine transformations, with $O(x,\nu) = O(Ax + b, A\nu + b)$, and using the fact that the private median is computed using only the outlyingness function, the simplified case loses no generality. 

First, we check Conditions~\ref{cond::Pop_Bound}--\ref{cond::dens_lb} for $(\mu,\sigma)=(\tm_\alpha,\tad_\alpha)$ where $1-2\alpha=O(1)$. 
Direct calculation yields that Condition~\ref{cond::bounded-parameters} is satisfied with $c_1\leq \beta_1^{1/2}$, $c_2\geq \beta_d^{1/2}$ and $c_3>0$. Take $c_3=1$.  
Next, we check Condition~\ref{cond::dens_lb} for $q=\alpha$ and $r=\beta_1^{1/2}/4$.
Provided that $\delta>e^{-n\epsilon/4\alpha}/2$, then Condition~\ref{cond::dens_lb} holds with $$M=(2\pi\beta_d)^{-1/2}\exp({-(\Phi^{-1}(\alpha-3/ n)-1/4)^2/2})\gtrsim \beta_d^{-1/2}.$$ 
We now apply Theorem~\ref{thm::trimmed} to show that Condition~\ref{cond::LS_Bound} holds. 
Doing so yields that for some universal constant $c'>0$, we have that $a_{1}\lesssim \exp({d+\log n})$,  $a_{2}\gtrsim \beta_d^{-1/2}n$, and $\kappa_n\lesssim \exp\left(-c'n/\omega+d\log(cd/n)\right).$
Next, using Gaussian concentration of measure, (Lemma~\ref{lemm::cond2-gaussian-trimmed}) Condition~\ref{cond::Pop_Bound} is satisfied for $(\tm_\alpha,\tad_\alpha)$ with 
$$\zeta_{n,\mu}(t)=\zeta_{n,\sigma}(t)= \exp\left( -cnt^2/\beta_d+d\right),$$
for some universal constant $c>0$. 

We can now apply Theorems~\ref{thm::no-reply} and~\ref{thm::pd_acc2}. 
Beginning with Theorem~\ref{thm::no-reply}, there exists universal constants $C,C'>0$, for
$$\delta\gtrsim \exp\left(-\epsilon\left(\frac{ \tau}{\eta}\wedge n\right)+d\log(C'\omega+C)\right),$$
there exists universal constants $c,c',c''>0$ such that
\begin{equation*}
          \E{}{Z}
      \geq  1-c''\left(\delta e^\epsilon+ \exp\left(-c'\frac{n^2\eta }{\beta_d^{1/2}(\tau\vee 1)}+d+\log n\right)
      +\exp\left(-\frac{c'n}{\omega}+d\log(cn/d)\right)\right).
\end{equation*}

Thus, in order to control the probability of failing the test, it suffices to inspect the ratio 
$n^2\eta/\tau.$
Note that $O$ is a standardized measure of outlyingness, and doesn't depend on the scale of the data, therefore it is natural to take $\tau$ to be a constant. 
Next, suppose $\delta=O(n^{-L})$ for some large $L>0$. 
Taking $\eta\gtrsim 1/n\beta_d^{1/2}$ results in $\E{}{Z}\geq 1-c\delta e^{\epsilon}$, provided that $\beta_d^{1/2}=O(1)$. 

Next, we can take $\eta\propto 1/n$ and apply Theorem~\ref{thm::pd_acc2}, which yields that, 
provided $n\geq d$, for all $c e^{\epsilon} n^{-L}\lesssim \gamma \leq 1,$ and $d\geq 1$, with probability $1-\gamma$,
\begin{equation*}
    \norm{\tilde{\theta}-\theta}\lesssim \frac{\log(1/\gamma)\vee d\log(\beta_d+1)}{n\epsilon} \vee \sqrt{\omega\frac{\log(1/\gamma)\vee d\log(n/d)}{n}}.
\end{equation*}
For many reasonable values of $\gamma$, the sampling error greatly exceeds the cost of privacy. 
For instance, for $\gamma \propto \delta$, gives $||\tilde{\theta}-\theta||\approx \sqrt{\omega\cdot d\log(n/d)/n}$ whenever, roughly, $n\gtrsim d/\omega\epsilon^2$. 
Indeed, whenever $n\gtrsim d/\omega\epsilon^2$, the deviations bound obtained from Theorem~\ref{thm::pd_acc2} matches the optimal deviations bound (for sub-Gaussian mean estimation) given by \citet[][see Theorem 3.1]{Cai2019}, see also \citep{Kamath2018}, up to logarithmic terms. 
\end{example}
In our next example, we quantify the cost of privacy in the heavy-tailed setting, i.e., when no moments of $\nu$ exist. 
Given that the mean does not exist, we consider $(\mu,\sigma)=(\med,\mad)$. 
We show that even in this difficult setting, the cost of privacy is small relative to the sampling error. 
Furthermore, we observe that the ``outlier error amplification'' resulting from the heavy tails, greatly outweighs the cost of privacy. 
\begin{example}[Cauchy data]\label{ex::cauchy}
Consider $\nu$ to be made up of $d$ independent Cauchy marginals with scale parameter equal to one and location parameter equal to zero. Analogous to Example~\ref{ex::gaus}, the assumptions of the scale parameter being one and location parameter being zero are made without loss of generality.
Again, we check Conditions~\ref{cond::Pop_Bound}--\ref{cond::dens_lb}. 
First, direct calculation yields that Condition~\ref{cond::bounded-parameters} holds with $c_1=1$, $c_2=d^{1/2}$ and $c_3=1$. 

Provided that $\delta\geq e^{-cn\epsilon }$ for some universal constant $c>0$, Condition~\ref{cond::dens_lb} holds with $M\gtrsim d^{-1/2}$. 
Applying Theorem~\ref{thm::order_cond_1}, we conclude that Condition~\ref{cond::LS_Bound} holds with $a_{1}\lesssim \exp(d+\log n)$, 
$a_{2,d,3}\gtrsim d^{-1/2}$, and $\kappa_n\lesssim \exp(-c'n/d+d\log\left(cn/d\right))$. 
Next, we use Condition~\ref{cond::dens_lb} to show that Condition~\ref{cond::Pop_Bound} holds (see Lemma~\ref{lem::cond_2_med} and Lemma~\ref{lem::cond_2_mad}) with 
$$\zeta_{n,\mu}(t)= \exp\left(-2nt^2/d+d\log(cn/d)\right),$$
and
$$\zeta_{n,\sigma}(t)= \exp\left(-c'nt^2/d+d\log(cn/d)\right)+
\exp\left(-c'n/d^2+d\log(cn/d)\right),$$
for universal constants $c,c'>0$.
We have now shown that all necessary conditions for the application of Theorems~\ref{thm::no-reply} and~\ref{thm::pd_acc2} are satisfied. 
We begin with an application of Theorem~\ref{thm::no-reply} and omit universal constants for clarity. 
For $\delta\gtrsim \exp\left(-\epsilon \tau/\eta+\epsilon+d\log(d\tau)\right)$, we have
\begin{align*}
     \E{}{Z}&\geq  1-(e^{\epsilon}\delta-\exp\left(-\frac{n\eta }{d^{1/2}\tau}+d+\log n\right)- \exp\left(-\frac{n}{d^2}+d\log(n/d)\right).
\end{align*}
Thus, compared to the Gaussian case, the lower bound on $\delta$ is larger. 
Furthermore, we require $\eta\gtrsim d^{3/2}/n\vee d^{1/2}\log n/n$ ensuring that $\E{}{Z}\gtrsim  n^{-L}$, a factor of $d^{1/2}$ larger than in the Gaussian case. 
We can now apply Theorem~\ref{thm::pd_acc2}. 
Taking $\eta\propto d^{3/2}/n $, for $n^{-L}<\gamma<1$ with probability $1-\gamma$, it holds that
$$
\norm{\tilde{\theta}-\theta} \lesssim\frac{d^{3/2}\left[\log (1 / \gamma)\vee d\log d\right]}{n\epsilon} \vee  \sqrt{\frac{d\log(1/\gamma)\vee d^{2}\log(n/d)}{n}}.
$$
It is immediate that the scaling in the dimension of the deviations bound is worse than that of the Gaussian case (Example~\ref{ex::gaus}). 
This is expected, given that heavy-tailed estimation is a more challenging problem. 
We refer to this as ``outlier error amplification''. 
A similar phenomenon, in the form of sample complexity bounds, was observed by \citet{ramsay2022concentration}, for private medians based on other depth functions.  
However, it is important to note that the obtained bound is only an upper bound on the deviations of the estimator. 
Indeed, to the best of our knowledge, the lower bound on the sample complexity of differentially private estimation of the location parameter from a product of Cauchy marginals is unknown. 
It is straightforward to show that this is at least as hard as differentially private Gaussian mean estimation, see Lemma~\ref{lem::s_c_cauchy}. 
However, there is a gap between the upper bounds derived here and in \citet{ramsay2022concentration} and the lower bound for Gaussian mean estimation. 
We conjecture that a tighter lower bound exists, which matches the upper bound of \citet{ramsay2022concentration}. 
This is supported by the numerical experiments we completed, see Section~\ref{sec::sim}, where we observed the ``outlier error amplification'' effect. 
\end{example}
\section{Numerical experiments}\label{sec::sim}
To complement our theoretical results, we provide a numerical demonstration of our proposed methods. 
In order to compute the private projection-depth-based median, we employ discretized Langevin dynamics (as was done by \citet{ramsay2022concentration}) and compute outlyingness by (randomly) discretizing the unit sphere, see Appendix~\ref{app::algorithm} for the algorithm. 
More efficient algorithms could be constructed using the methods and results of \citet{DYCKERHOFF2021}, but we leave that for future work. The current implementation's differential privacy guarantee may be affected by the use of Markov chain Monte Carlo (MCMC) methods \citep{mcmcps}. 
The purpose of the implementation is to numerically investigate the cost of privatizing the projection-depth-based median and compare to some other private measures of location, and the non-private projection-depth-based median. 
Our code is available on \href{https://github.com/12ramsake/PTR-PD-Median}{GitHub}. 

We compute the private and non-private projection-depth-based median based on the median, with the median absolute deviation as the scale estimate. 
We compare the performance of the projection-depth-based medians to the private smoothed integrated dual depth median \citep{ramsay2022concentration}, the non-private sample mean, and the private coin-press mean of \citet{Biswas2020}. 
Following \citet{ramsay2022concentration}, for the integrated dual depth median, we set the smoothing parameter to 10, the prior as $\cN(0,25dI)$. For the coin-press mean, we set the bounding ball to have a radius of $10\sqrt{d}$ and the algorithm was run for four iterations. 

\begin{figure}[t]
\includegraphics[width=\textwidth]{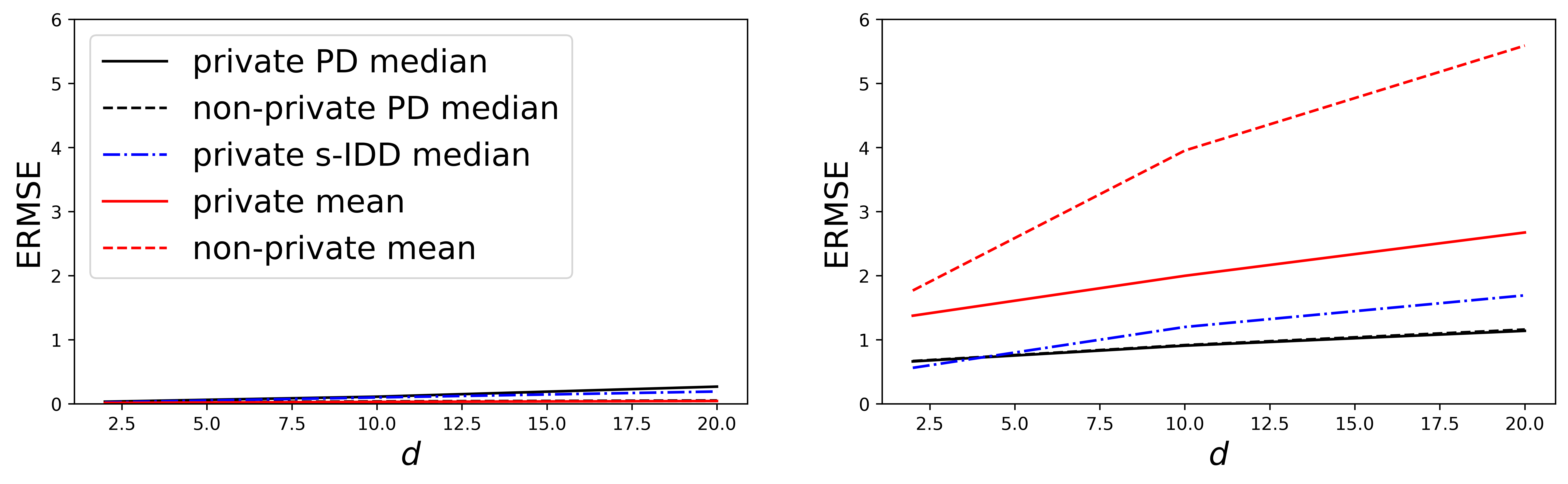}
    \caption{Empirical root mean squared error (ERMSE) of the location estimates under (left) Gaussian data and (right) contaminated Gaussian data. The cost of privacy is eclipsed by the outlier error amplification. The private estimates are all relatively close to their non-private counterparts. By contrast, the increase in error for all estimators resulting from contamination is apparent. The usual differences between robust and non-robust estimators are also apparent. Notably, the projection-depth-based median is the most robust.}
    \label{fig:dimension}
\end{figure}
Following \citet{ramsay2022concentration}, we simulated fifty instances of each PD median, with $n=10,000$ over a range of dimensions from two to twenty. 
The data were either generated from a standard Gaussian measure or contaminated standard Gaussian measure. 
In the contaminated model, 25\% of the observations had mean $(5,\ldots,5)$. 
We set $\epsilon=10,\ \delta=10/n$, $\tau=1$ and $\eta=30(\log n)/n$. The number of vectors was chosen to be 500 for $d<20$ and 1000 otherwise. 

Figure~\ref{fig:dimension} displays the empirical root mean squared error (ERMSE) for the different location estimates. 
It is apparent that the cost of privacy is relatively low for all estimates, though, the classical trade-off between robustness and efficiency is evident. 
The mean estimators perform best in the uncontaminated setting. The opposite is true in the contaminated setting. 
The outlier error amplification is greater than the cost of privacy in both scenarios. 
That is, the increase in error in all estimates resulting from the outliers is much larger than the increase in error from private estimation. 
We also note that the private smoothed integrated dual depth median performs better than the private projection-depth-based median in the uncontaminated setting, but worse than the private projection-depth-based median in the contaminated setting. 
When considering which median to use, it is helpful to note that the smoothed integrated dual depth median is considerably faster to compute in high dimensions than the projection-depth-based median.
However, it is only equivariant under similarity transformations \citep{ramsay2022concentration}, while the projection-depth-based medians, are equivariant under affine transformations \citep{Zuo2003}. 

\section{General results on propose-test-release mechanisms}\label{sec::ptr_general}

In this section, we elaborate on the notion of safety \citep{Brown2021, 2021Liub} in the general case of PTR mechanisms. 
We show that safety is both sufficient and necessary for privacy of a PTR mechanism. We also present general results for analyzing a PTR mechanism based on the exponential mechanism. 
We begin by introducing a general notion of a test mechanism. 
\begin{definition}\label{dfn::test_m}
    A test mechanism is any mechanism, $Q$, such that for all $\hat\nu\in \widehat\cM_{1}(n,d)$,  $Q_{\hat\nu}$ is a Bernoulli measure with probability of success $\lambda(\hat\nu)$, for some $0\leq \lambda(\hat\nu)\leq 1$.
\end{definition}
\noindent Definition \ref{dfn::test_m} simply says that $Q$ maps an empirical measure $\hat\nu$ to a Bernoulli measure with probability of success $\lambda(\hat\nu)$, for some $0\leq \lambda(\hat\nu)\leq 1$. 
Note that every test mechanism is characterized by its associated $\lambda$ function. 
\begin{definition}\label{dfn::ptr_gen}
Given a mechanism $Q$ whose range is $\cM_1(\Theta)$ and a test mechanism characterized by $\lambda$, the PTR mechanism based on $(Q,\lambda)$ is given by the map
$$P_{\hat\nu}\colon (B)\mapsto\lambda(\hat\nu)Q_{\hat\nu}(B\backslash \{\perp\}) + \left[1-\lambda(\hat\nu)\right]\ind{\perp\in B},$$
where $B\in\sB(\Theta\cup \{\perp\})$ and $\hat\nu\in \widehat{\cM}_1(n,d)$. 
\end{definition}
The probability of any event under the PTR mechanism $P$ applied to $\hat\nu$ will be the probability that the test is passed, multiplied by the probability of the event under the mechanism $Q$ applied to $\hat\nu$, plus the probability of the test failing if $\perp$ is an element of the event. 
We now introduce generalized definitions of safety and the safety margin, which apply to arbitrary mechanisms. Subsequently, we will introduce a generalized definition of the exponential mechanism, which applies to an arbitrary cost function $\phi$.

\begin{definition}\label{dfn::sfty_gen}
Given $\epsilon,\delta>0$, the pair $(Q,\hat\nu)$ is $(\epsilon,\delta)$-safe if, for all $\tilde\nu\in\widetilde\cM(\hat\nu,1)$ and measurable sets $B$, it holds that
$$Q_{\hat{\nu}}(B )\leq e^{\epsilon}Q_{\tilde{\nu}}(B)+\delta\qquad\text{and}\qquad Q_{\tilde{\nu}}(B )\leq e^{\epsilon}Q_{\hat{\nu}}(B )+\delta.$$
The $(\epsilon,\delta)$-safe set of $Q$ is
$$\bfS(Q,\epsilon,\delta)= \{ \hat\nu\in \widehat{M}_1(n,d)\colon\  (Q,\hat\nu) \text{ is } (\epsilon,\delta)\text{-safe} \}.$$
\end{definition}
Observe that Definition \ref{dfn::sfty_gen} is a generalized version of Definition \ref{dfn::sfty}. 
Definition \ref{dfn::sfty} is a special case of Definition \ref{dfn::sfty_gen}, with $Q=\EM$. 
Note that $Q$ is allowed to depend on $\epsilon$ and $\delta$. 
Safety is linked to the privacy of a given PTR mechanism. 
Colloquially, if $\lambda(\hat\nu)$ is large, then the dataset must satisfy some level of safety. 
In addition, sufficient conditions for differential privacy of a PTR mechanism are that the test mechanism is differentially private and that $\lambda(\hat\nu)$ is very small if $\hat\nu$ is unsafe. 
\begin{definition}\label{dfn::sfty_gen_mar}
Given $\epsilon,\delta>0$ and a mechanism $Q$, the safety margin of an empirical measure $\hat\nu$ with respect to $Q$, is 
$$\SM(\hat\nu,Q; \epsilon,\delta)=\inf\{m\in \mathbb{Z}^+\colon\ \exists \tilde\nu\in\tilde{\cM}(\hat\nu,m)\ s.t.\ \tilde\nu\notin \bfS(Q,\epsilon,\delta) \}.$$
\end{definition}
Observe that Definition \ref{dfn::sfty_gen_mar} is a generalized version of Definition \ref{dfn::sfty_mar}. 
That is, Definition \ref{dfn::sfty_mar} is a special case of Definition \ref{dfn::sfty_gen_mar} with $Q$ always set to $\EM$. 
When $\epsilon$ and $\delta$ are clear from the context, we omit them, denoting the safety margin as $\SM(\hat\nu,Q).$ 
If $\lambda(\hat\nu)$ is small whenever $\hat\nu$ is not in the safe set of $Q$ and the test mechanism is differentially private, then a PTR mechanism will be differentially private.
\begin{theorem}\label{thm::s_suff}
Given $\epsilon_1,\epsilon_2,\delta_{1},\delta_{2},\delta_3>0$, let $P$ be the PTR mechanism based on $Q$ and  $\lambda$. 
If $\lambda$ is $(\epsilon_1,\delta_{1})$-differentially private and is such that for any $\hat\nu\notin \bfS(Q,\epsilon_2,\delta_{2})$, $\lambda(\hat\nu,\epsilon_1,\delta_{1})<\delta_3$, then $P$
is $(\epsilon_1+\epsilon_2,\delta_{1}+\delta_{2}\vee \delta_3)$-differentially private. 
\end{theorem}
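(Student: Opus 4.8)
I would show that $P_{\hat\nu}(B)\le e^{\epsilon_1+\epsilon_2}P_{\tilde\nu}(B)+\delta_1+\delta_2\vee\delta_3$ for every $\hat\nu\in\widehat\cM_1(n,d)$, every $\tilde\nu\in\widetilde\cM(\hat\nu,1)$, and every measurable $B\subseteq\Theta\cup\{\perp\}$. Writing $B_\Theta=B\cap\Theta$, Definition~\ref{dfn::ptr_gen} gives $P_{\hat\nu}(B)=\lambda(\hat\nu)Q_{\hat\nu}(B_\Theta)+(1-\lambda(\hat\nu))\ind{\perp\in B}$, and similarly for $\tilde\nu$. The plan is to split into two cases according to whether at least one of $\hat\nu,\tilde\nu$ lies in $\bfS(Q,\epsilon_2,\delta_2)$. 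The key observation is that, since the $1$-neighbour relation is symmetric, if \emph{either} member of the pair is safe then Definition~\ref{dfn::sfty_gen} already yields the two-sided bounds $Q_{\hat\nu}(C)\le e^{\epsilon_2}Q_{\tilde\nu}(C)+\delta_2$ and $Q_{\tilde\nu}(C)\le e^{\epsilon_2}Q_{\hat\nu}(C)+\delta_2$ for all measurable $C\subseteq\Theta$.

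\textbf{Case 1: at least one of $\hat\nu,\tilde\nu$ is safe.} Here $P_{\hat\nu}$ is the pushforward, under the fixed measurable map $f$ with $f(1,w)=w$ and $f(0,w)=\perp$, of the product of a $\mathrm{Bernoulli}(\lambda(\hat\nu))$ law with $Q_{\hat\nu}$, and likewise for $P_{\tilde\nu}$. Since $\lambda$ is $(\epsilon_1,\delta_1)$-differentially private the Bernoulli factors satisfy both $(\epsilon_1,\delta_1)$-indistinguishability inequalities, and by the observation above the $Q$-factors satisfy both $(\epsilon_2,\delta_2)$-indistinguishability inequalities; basic composition applied to this single neighbouring pair, together with closure of approximate indistinguishability under post-processing, then gives the claim with slack $\delta_1+\delta_2\le\delta_1+\delta_2\vee\delta_3$. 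An elementary version avoiding any composition theorem: write $\sup_{B}\{P_{\hat\nu}(B)-e^{\epsilon_1+\epsilon_2}P_{\tilde\nu}(B)\}$ as the sum of its restriction to subsets of $\Theta$ and its contribution from the singleton $\{\perp\}$; the latter is at most $[(1-\lambda(\hat\nu))-e^{\epsilon_1}(1-\lambda(\tilde\nu))]_+\le\delta_1$, and the former, after inserting the truncations $\min(\lambda(\hat\nu),e^{\epsilon_1}\lambda(\tilde\nu))$ and $\min(Q_{\hat\nu}(C),e^{\epsilon_2}Q_{\tilde\nu}(C))$, is bounded by $(\lambda(\hat\nu)-\min(\lambda(\hat\nu),e^{\epsilon_1}\lambda(\tilde\nu)))+\delta_2$; one then checks that the first summand of this bound is zero whenever the $\{\perp\}$-contribution is nonzero (both are governed by the sign of $\lambda(\hat\nu)-\lambda(\tilde\nu)$), so the total never exceeds $\delta_1+\delta_2$.

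\textbf{Case 2: both $\hat\nu$ and $\tilde\nu$ are unsafe.} Then the hypothesis gives $\lambda(\hat\nu)<\delta_3$ and $\lambda(\tilde\nu)<\delta_3$, so each of $P_{\hat\nu},P_{\tilde\nu}$ is within total variation distance $\delta_3$ of the point mass at $\perp$. If $\perp\notin B$ then $P_{\hat\nu}(B)=\lambda(\hat\nu)Q_{\hat\nu}(B)<\delta_3\le\delta_1+\delta_2\vee\delta_3$; if $\perp\in B$ then $P_{\hat\nu}(B)\le1$ while $P_{\tilde\nu}(B)\ge1-\lambda(\tilde\nu)>1-\delta_3$, so $e^{\epsilon_1+\epsilon_2}P_{\tilde\nu}(B)+\delta_1+\delta_2\vee\delta_3>e^{\epsilon_1+\epsilon_2}(1-\delta_3)+\delta_3\ge1\ge P_{\hat\nu}(B)$. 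Either way the required inequality holds.

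The main obstacle is obtaining exactly the advertised privacy loss $\delta_1+\delta_2\vee\delta_3$ rather than something weaker: a naive accounting produces an extra $\delta_1$, or picks up $e^{\epsilon_1}$/$e^{\epsilon_2}$ factors on the $\delta$'s. This is what dictates the two refinements above --- splitting on ``at least one safe'' rather than on ``$\hat\nu$ safe'' (so the awkward unsafe-$\hat\nu$, safe-$\tilde\nu$ pair is absorbed into Case~1 via symmetry of neighbouring and the two-sidedness of safety), and the mutual-exclusivity observation in Case~1 that the $\perp$-leakage and the $\lambda$-part of the $\Theta$-leakage cannot both be active. A minor point to state explicitly is that basic composition and post-processing are being invoked only for the fixed pair $(\hat\nu,\tilde\nu)$, which is legitimate since both are statements about pairs of measures.
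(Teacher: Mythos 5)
Your proof is correct, and it proceeds along a genuinely different decomposition from the paper's. The paper splits on whether $\hat\nu$ is safe: if $\hat\nu\in\bfS(Q,\epsilon_2,\delta_2)$ it invokes composition to get $(\epsilon_1+\epsilon_2,\delta_1+\delta_2)$, and if $\hat\nu$ is unsafe it handles $\perp\notin A$ by $P_{\hat\nu}(A)\le\lambda(\hat\nu)<\delta_3$ and handles $\perp\in A$ by $P_{\hat\nu}(A)\le P_{\hat\nu}(\{\perp\})+\delta_3\le e^{\epsilon_1}P_{\tilde\nu}(\{\perp\})+\delta_1+\delta_3$, which uses the $(\epsilon_1,\delta_1)$-DP of the test on the singleton $\{\perp\}$. (The paper's displayed $\delta_2$ there is a typo for $\delta_1$; the intended bound is $\delta_1+\delta_3\le\delta_1+\delta_2\vee\delta_3$.) You instead split on whether at least one of $\hat\nu,\tilde\nu$ is safe, exploiting the symmetry of the neighbour relation and the two-sidedness built into Definition~\ref{dfn::sfty_gen}; this is a valid alternative, though not strictly needed since the one-sided split already works. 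Your Case~2 argument for $\perp\in B$ is also different: rather than invoking the DP of $\lambda$, you use the fact that both endpoints are unsafe, hence $\lambda(\tilde\nu)<\delta_3$, so $P_{\tilde\nu}(B)\ge 1-\delta_3$ and the inequality follows from $e^{\epsilon_1+\epsilon_2}(1-\delta_3)+\delta_3\ge 1$ (which requires $\delta_3\le 1$, but that is harmless since the claim is vacuous otherwise). Finally, your explicit, composition-free Case~1 argument is a nice addition that the paper compresses to the phrase ``by composition''; the mutual-exclusivity observation is correct, though your parenthetical ``governed by the sign of $\lambda(\hat\nu)-\lambda(\tilde\nu)$'' is a mild overstatement --- the precise conditions are $\lambda(\hat\nu)>e^{\epsilon_1}\lambda(\tilde\nu)$ and $1-\lambda(\hat\nu)>e^{\epsilon_1}(1-\lambda(\tilde\nu))$, and one sees they cannot hold simultaneously because adding them gives $1>e^{\epsilon_1}$.
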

\begin{proof}
First, consider $\hat\nu\in \bfS(Q,\epsilon_2,\delta_{2})$. By composition, the differential privacy bound with parameters $(\epsilon_1+\epsilon_2,\delta_{1}+\delta_{2})$ will be satisfied. 
Now, consider $\hat\nu\notin \bfS(Q,\epsilon_2,\delta_{3})$. 
For sets such that $\perp\notin A$, then $P_{\hat\nu}(A)\leq \lambda(\hat\nu)\leq \delta_3.$
For sets such that $\perp\in A$, then 
\begin{equation*}
   P_{\hat\nu}(A)\leq P_{\hat\nu}(\{\perp\})+\delta_3\leq P_{\tilde\nu}(\{\perp\})e^{\epsilon_1}+\delta_{2}+\delta_3. \qedhere
\end{equation*}
\end{proof}
The previous theorem gives sufficient conditions on both the privacy guarantee of the test mechanism and the relationship between the test mechanism and the safe set of $Q$ to gain differential privacy of a PTR mechanism. 
We can conversely show that a $(\epsilon,\delta)$-differentially private propose test release mechanism consists of a test that is at least $(\epsilon,\delta)$-differentially private and, for datasets likely to pass the test, the release mechanism-dataset pair is safe. 
\begin{theorem}
Suppose that a PTR mechanism $P$ based on $Q$ and $\lambda$ is $(\epsilon,\delta)$-differentially private. 
Then the follow statements hold:
\begin{enumerate}
    \item The test mechanism characterized by $\lambda$ satisfies $(\epsilon,\delta)$-differential privacy.
    \item For any $\hat{\nu}$ such that $\lambda(\hat{\nu})>1/k$, the pair $(Q,\hat\nu)$ is $(2\epsilon,k\delta e^\epsilon)$-safe.
\end{enumerate}
\end{theorem}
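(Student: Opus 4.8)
The plan is to prove both statements by unpacking Definition~\ref{dfn::ptr_gen} and applying the $(\epsilon,\delta)$-differential privacy of $P$ to three particular kinds of measurable sets in $\sB(\Theta\cup\{\perp\})$: the singleton $\{\perp\}$, the whole parameter space $\Theta$, and an arbitrary $B\in\sB(\Theta)$. The relevant identities, valid for every $\hat\nu\in\widehat\cM_1(n,d)$, are $P_{\hat\nu}(\{\perp\})=1-\lambda(\hat\nu)$, $P_{\hat\nu}(\Theta)=\lambda(\hat\nu)$, and $P_{\hat\nu}(B)=\lambda(\hat\nu)Q_{\hat\nu}(B)$ for $B\subseteq\Theta$; each follows immediately from the defining formula for $P_{\hat\nu}$ together with the fact that $Q_{\hat\nu}$ is a probability measure on $\Theta$ and $\perp\notin\Theta$.

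For the first claim, recall from Definition~\ref{dfn::test_m} that the test mechanism characterized by $\lambda$ is a Bernoulli measure whose only nontrivial events are $\{\text{pass}\}$ and $\{\text{fail}\}$, with probabilities $\lambda(\hat\nu)=P_{\hat\nu}(\Theta)$ and $1-\lambda(\hat\nu)=P_{\hat\nu}(\{\perp\})$ respectively. Since $\Theta$ and $\{\perp\}$ both lie in $\sB(\Theta\cup\{\perp\})$, applying the $(\epsilon,\delta)$-privacy inequality for $P$ to these two sets (in both directions, using symmetry of the adjacency relation) yields exactly the two inequalities required for the test mechanism; the events $\emptyset$ and $\{\text{pass},\text{fail}\}$ are trivial, the latter because $e^\epsilon\geq 1$. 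Hence the test mechanism is $(\epsilon,\delta)$-differentially private.

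For the second claim, fix $\hat\nu$ with $\lambda(\hat\nu)>1/k$, an adjacent $\tilde\nu\in\widetilde\cM(\hat\nu,1)$, and $B\in\sB(\Theta)$. Writing $Q_{\hat\nu}(B)=P_{\hat\nu}(B)/\lambda(\hat\nu)$ and applying the $(\epsilon,\delta)$-privacy of $P$ to $B$ gives $Q_{\hat\nu}(B)\leq \lambda(\hat\nu)^{-1}\big(e^\epsilon\lambda(\tilde\nu)Q_{\tilde\nu}(B)+\delta\big)$. I would then use $\lambda(\hat\nu)^{-1}<k$ together with the bound $\lambda(\tilde\nu)\leq e^\epsilon\lambda(\hat\nu)+\delta$ (the instance of the first claim for the event $\{\text{pass}\}$) to control the ratio $\lambda(\tilde\nu)/\lambda(\hat\nu)$, and absorb the resulting lower-order cross terms using $Q_{\tilde\nu}(B)\leq 1$, arriving at $Q_{\hat\nu}(B)\leq e^{2\epsilon}Q_{\tilde\nu}(B)+k\delta e^\epsilon$. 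The reverse inequality is symmetric, except that it first requires a lower bound on $\lambda(\tilde\nu)$, which again comes from the first claim: $\lambda(\tilde\nu)\geq e^{-\epsilon}(\lambda(\hat\nu)-\delta)>e^{-\epsilon}(1/k-\delta)$, where one may assume $\delta<1/k$ since otherwise $k\delta e^\epsilon\geq 1$ and both safety inequalities hold trivially.

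The routine parts — the three probability identities, the first claim, and the algebra of dividing by $\lambda(\hat\nu)$ — are straightforward. The main obstacle is the bookkeeping of multiplicative versus additive privacy loss in the second claim: because $\lambda$ itself varies across adjacent datasets and is only pinned down up to the test's own $(\epsilon,\delta)$ guarantee, one must check that dividing by $\lambda(\hat\nu)$ and substituting the $\lambda(\tilde\nu)$ bound does not inflate the constants past $e^{2\epsilon}$ and $k\delta e^\epsilon$; the reverse direction is the more delicate of the two, since there $\lambda(\tilde\nu)$ must be bounded away from zero rather than assumed so.
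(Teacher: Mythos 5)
Your approach is the same as the paper's: item~1 is the $(\epsilon,\delta)$-DP inequality for $P$ applied to the set $\{\perp\}$ (equivalently $\Theta$), and item~2 follows from that same inequality applied to a set $B$ with $\perp\notin B$ together with item~1 to control $\lambda(\tilde\nu)/\lambda(\hat\nu)$. The paper's own proof is a two-sentence sketch, so you are reconstructing its intended argument, and the structure you lay out (the three identities $P_{\hat\nu}(\{\perp\})=1-\lambda(\hat\nu)$, $P_{\hat\nu}(\Theta)=\lambda(\hat\nu)$, $P_{\hat\nu}(B)=\lambda(\hat\nu)Q_{\hat\nu}(B)$, then divide and substitute) is the right one.

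However, the last step of item~2, as you outline it, does not actually land on the stated constant $k\delta e^\epsilon$, and you should not assert that it does. Carrying out your algebra in the forward direction,
$$Q_{\hat\nu}(B)\leq e^\epsilon\frac{\lambda(\tilde\nu)}{\lambda(\hat\nu)}Q_{\tilde\nu}(B)+\frac{\delta}{\lambda(\hat\nu)}< e^\epsilon\bigl(e^\epsilon+k\delta\bigr)Q_{\tilde\nu}(B)+k\delta = e^{2\epsilon}Q_{\tilde\nu}(B)+k\delta\bigl(e^\epsilon Q_{\tilde\nu}(B)+1\bigr),$$
and bounding $Q_{\tilde\nu}(B)\le 1$ gives the additive term $k\delta(e^\epsilon+1)$, which is strictly larger than $k\delta e^\epsilon$; there is no case split on $Q_{\tilde\nu}(B)$ that rescues the stated constant for all $\epsilon$. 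The reverse direction is worse still: one must lower-bound $\lambda(\tilde\nu)>e^{-\epsilon}(1/k-\delta)$, and dividing by this quantity introduces a factor $(1-k\delta)^{-1}$, so the additive error is on the order of $k\delta e^{\epsilon}(e^\epsilon+1)/(1-k\delta)$, i.e.\ a $k\delta e^{2\epsilon}$-scale term rather than $k\delta e^{\epsilon}$. This imprecision is almost certainly shared with the paper (its proof does not display the arithmetic), but your writeup presently claims to "arrive at $Q_{\hat\nu}(B)\leq e^{2\epsilon}Q_{\tilde\nu}(B)+k\delta e^\epsilon$" when the outlined steps yield only the weaker $(2\epsilon,\,Ck\delta e^{2\epsilon})$-safety for a modest constant $C$ (e.g.\ under the harmless restriction $k\delta\le 1/2$ that you already invoke). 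Either record the constant you actually obtain, or flag explicitly that the theorem's constant appears slightly optimistic and that the correct statement carries an extra factor.
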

\begin{proof}
    The first item follows from applying the  $(\epsilon,\delta)$-differential privacy inequality to the set $\{\perp\}$. 
    The second item follows the first item, and from applying the $(\epsilon,\delta)$-differential privacy inequality to a set $A$ such that $\perp\notin A$. 
\end{proof}


We now consider PTR mechanisms based on general exponential mechanisms. 
For a general cost function $\phi(\cdot;\hat{\nu})$, let $A_{\tau,\hat{\nu}}=\{\phi(x ,\hat{\nu})\leq \tau\}$. 
Then, we can define the exponential mechanism $\EM_{\hat{\nu},\phi}(\cdot;\tau,\eta,\epsilon)$ as the measure whose density satisfies
\begin{equation*}\label{eqn::EM}
d\EM_{\hat{\nu},\phi}(\cdot;\tau,\eta,\epsilon)\propto\ind{x\in A_{\tau,\hat{\nu}}}\exp(-\phi(x,{\hat{\nu})\epsilon/4\eta})dx.
\end{equation*}

Now, the critical components for accuracy are that $\Prr{\hat{\nu}\in \bfS(Q,\epsilon,\delta)}$ is large and that $\lambda(\hat{\nu})$ is large when $\hat{\nu}\in \bfS(Q,\epsilon,\delta)$. 
We formalize this as follows. 
Let $G$ be the CDF of a standard Laplace RV. 
\begin{lemma}\label{lem::gen_bound_no_reply}
For all $\epsilon,\tau,\eta>0$, $\delta\leq e^{-\epsilon/2}/2$, $n,d\geq 1$ and $\nu\in \cM_1(\rdd)$ it holds that
\begin{multline*}
        \E{}{\lambda(\hat\nu,\epsilon,\delta)}\geq 1-\Prr{\inf_{y>0}e^{-\epsilon y/4\eta} \frac{\vol(A_{\tau+2\eta,\hat\nu}\cap A_{\tau-2\eta,\hat\nu}^c)}{\vol(A_{\tau-y-2\eta,\hat{\nu}})}> e^{-\epsilon/4}\delta/3}\\
        -\inf_{k\in \{0\}\cup [n-1]} \left[ \Prr{    \sup_{\substack{\tilde{\nu}\in \am{\hat\nu}{k+1}\\x\in A_{\tau+\eta,\hat{\nu}}\cup A_{\tau+\eta,\tilde{\nu}}}}|\phi(x,\hat{\nu})-\phi(x,\tilde{\nu})|>\eta/2}+G(\log(1/2\delta)-\epsilon k/2)\right].
\end{multline*}
\end{lemma}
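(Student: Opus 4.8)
The plan is to bound $\E{}{\lambda(\hat\nu)} = \Prr{Z=1}$ from below by showing that the test is passed with high probability whenever two good events occur: (i) the safety margin of $\hat\nu$ is large, and (ii) the Laplace noise $W$ is not too negative. First I would recall that, by the definition of the test mechanism in the general setting (as in~\eqref{eqn::test_m} with $\SM(\hat\nu,Q)$ in place of $\SM(\hat\nu)$), we have $\lambda(\hat\nu) = \Prr{\SM(\hat\nu,Q)+\tfrac{2}{\epsilon}W > \tfrac{2\log(1/2\delta)}{\epsilon}}$, where the probability is over $W$ only. Conditioning on $\hat\nu$, if $\SM(\hat\nu,Q)\geq k+1$ then this probability is at least $\Pr(W > \log(1/2\delta) - \epsilon k/2) = 1 - G(\log(1/2\delta)-\epsilon k/2)$. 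Taking expectations over $\hat\nu$ and using $\E{}{\lambda(\hat\nu)}\geq (1-G(\log(1/2\delta)-\epsilon k/2))\Prr{\SM(\hat\nu,Q)\geq k+1}$ for any fixed $k$, and then optimizing over $k\in\{0\}\cup[n-1]$, isolates the term $G(\log(1/2\delta)-\epsilon k/2)$ and reduces the problem to lower-bounding $\Prr{\SM(\hat\nu,Q)\geq k+1}$, equivalently upper-bounding $\Prr{\SM(\hat\nu,Q)\leq k+1}$.

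The next step is to bound $\Prr{\SM(\hat\nu,Q)\leq k+1}$ by the probability that \emph{some} $\tilde\nu\in\am{\hat\nu}{k+1}$ fails to be $(\epsilon,\delta)$-safe with respect to $Q=\EM_{\cdot,\phi}$. So I need a deterministic, verifiable sufficient condition guaranteeing that \emph{every} $\tilde\nu$ reachable from $\hat\nu$ in $k+1$ steps is safe. This is where the two remaining events in the statement come from. The key analytic step is the standard exponential-mechanism privacy computation: for neighbouring empirical measures $\hat\mu_1,\hat\mu_2$ (i.e.\ differing in one row), the ratio $\EM_{\hat\mu_1,\phi}(B)/\EM_{\hat\mu_2,\phi}(B)$ is controlled by (a) the sensitivity of the cost function $\sup_x|\phi(x,\hat\mu_1)-\phi(x,\hat\mu_2)|$ on the relevant support region, which contributes a multiplicative $e^{\epsilon}$-type factor, and (b) the discrepancy in the normalizing constants $\vol(A_{\tau,\hat\mu_1})$ versus $\vol(A_{\tau,\hat\mu_2})$, together with the mismatch between the truncation sets $A_{\tau,\hat\mu_1}$ and $A_{\tau,\hat\mu_2}$, which contributes the additive $\delta$ term. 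Concretely, when $\phi$ changes by at most $\eta/2$ between $\hat\nu$ and every $\tilde\nu\in\am{\hat\nu}{k+1}$ (on $A_{\tau+\eta,\hat\nu}\cup A_{\tau+\eta,\tilde\nu}$), the level sets nest as $A_{\tau-2\eta,\hat\nu}\subseteq A_{\tau,\tilde\nu}\subseteq A_{\tau+2\eta,\hat\nu}$ for any $\tilde\nu$ one step from such a $\tilde\nu$, and the symmetric difference of truncation regions is contained in $A_{\tau+2\eta,\hat\nu}\cap A_{\tau-2\eta,\hat\nu}^c$. The additive error in the privacy inequality, after also accounting for the exponential tilt $e^{-\epsilon y/4\eta}$ picking up mass at outlyingness level $y$, is then bounded by $\inf_{y>0} e^{-\epsilon y/4\eta}\,\vol(A_{\tau+2\eta,\hat\nu}\cap A_{\tau-2\eta,\hat\nu}^c)/\vol(A_{\tau-y-2\eta,\hat\nu})$; requiring this to be at most $e^{-\epsilon/4}\delta/3$ (the constants being slack absorbed from $\delta\leq e^{-\epsilon/2}/2$) forces safety of all such $\tilde\nu$.

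Putting these together: on the complement of the event $\{\inf_{y>0} e^{-\epsilon y/4\eta}\vol(A_{\tau+2\eta,\hat\nu}\cap A_{\tau-2\eta,\hat\nu}^c)/\vol(A_{\tau-y-2\eta,\hat\nu}) > e^{-\epsilon/4}\delta/3\}$ intersected with the complement of $\{\sup_{\tilde\nu\in\am{\hat\nu}{k+1},\,x\in A_{\tau+\eta,\hat\nu}\cup A_{\tau+\eta,\tilde\nu}}|\phi(x,\hat\nu)-\phi(x,\tilde\nu)| > \eta/2\}$, every $\tilde\nu\in\am{\hat\nu}{k+1}$ is $(\epsilon,\delta)$-safe, hence $\SM(\hat\nu,Q)\geq k+1$. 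A union bound over these two bad events, combined with the earlier reduction and the optimization over $k$, yields exactly the claimed inequality. The main obstacle I anticipate is getting the exponential-mechanism privacy bookkeeping exactly right — in particular, tracking how the $\eta/2$ cost-sensitivity propagates through \emph{two} neighbouring steps (from $\hat\nu$ to $\tilde\nu\in\am{\hat\nu}{k+1}$, and then from $\tilde\nu$ to its own neighbour in the safety definition), ensuring the level-set inclusions $A_{\tau-2\eta,\hat\nu}\subseteq A_{\tau,\tilde\nu}$ hold with the right constants, and verifying that the normalizing-constant ratio in terms of $\vol(A_{\tau-y-2\eta,\hat\nu})$ correctly captures the worst case over the exponential tilt — so that the additive slack closes at $e^{-\epsilon/4}\delta/3$ rather than some other constant.
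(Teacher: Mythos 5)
Your approach is essentially the same as the paper's: you split the failure probability into (i) the Laplace noise being too negative and (ii) the safety margin being too small, and then bound (ii) via a deterministic, verifiable sufficient condition for safety of the exponential mechanism, which the paper isolates as Lemma~\ref{thm::safety_margin_lower_bound} (built on Lemma~\ref{lemm::safe_cond}) while you re-derive it inline; your multiplicative decomposition $(1-G)\Prr{\SM\geq\cdot}\geq 1-G-\Prr{\SM<\cdot}$ and the paper's additive union bound conditional on the Laplace event give the same estimate.

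The one real slip is an off-by-one in the indexing that you yourself flag as the main risk. Conditioning on $\SM\geq k+1$ reduces the problem to upper-bounding $\Prr{\SM<k+1}=\Prr{\SM\leq k}$, not $\Prr{\SM\leq k+1}$ as you write; and since Lemma~\ref{thm::safety_margin_lower_bound} (conditions posed over $\am{\hat\nu}{m}$ imply $\SM\geq m-1$) turns $\Prr{\SM\leq k}$ into a failure probability over $\am{\hat\nu}{k+2}$, not $\am{\hat\nu}{k+1}$, your chain as stated would land on $\am{\hat\nu}{k+2}$, a slightly weaker (or at least re-indexed) bound. To produce exactly the pairing of $\am{\hat\nu}{k+1}$ with $G(\log(1/2\delta)-\epsilon k/2)$ that the lemma states, you should threshold at $\SM\geq k$ rather than $\SM\geq k+1$; this is precisely what the paper's proof does, writing $1-\E{}{\lambda}\leq \Prr{\SM<k}+\Prr{W<\log(1/2\delta)-\epsilon k/2}$ and then bounding $\Prr{\SM<k}$ via the conditions over $\am{\hat\nu}{k+1}$.
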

The proof of Lemma~\ref{lem::gen_bound_no_reply} can be found in Appendix~\ref{app::gen_proofs}. 
This has the interpretation that the probability of passing the test is high when the probability that, for some small $\alpha>0$, the $\tau\pm \alpha$-contours of $\phi(\cdot,\hat\nu)$ are not too steep and the objective is locally insensitive within the $\tau\pm \alpha$-contour. 


\bibliographystyle{apalike}
\bibliography{main.bib}
\appendix

\section{Algorithm}\label{app::algorithm}
Here we outline the algorithm used to compute the private, projection-depth-based median in Section~\ref{sec::sim}. 
We will approximate $O(x,\hat\nu)$ by $\widehat O(x,\hat\nu)=\sup_{u\in \cU_N} O_u(x,\hat\nu)$, where $\cU_N$ is a set of $N$ unit vectors sampled uniformly from $\bS^{d-1}$. 
The algorithm consists of two parts, executing the test and sampling from the exponential mechanism. 
For the latter, observe that the outlyingness function is quasiconvex, i.e., its lower level sets are convex. 
This allows us to apply an envelope theorem to compute the gradient of $\widehat O_u(x,\hat\nu)$ with respect to $x$. To this end, for $x\in A_\tau$, let $u_x$ be the unit vector that maximizes $\widehat O_u(x,\hat\nu)$. 
Then, for any $x\in A_\tau$, it holds that
\begin{align*}
    \nabla \widehat O(x,\nu)&=u_x\frac{\mathrm{sign}(x^\top u_x-\mu_{u_x} )}{\sigma_{u_x}}. 
\end{align*}
To compute the non-private projection-depth-based median, we use gradient descent, and, to sample from the exponential mechanism, we use Langevin Dynamics. 
Given that Langevin Dynamics samples approximately from the exponential mechanism, we note that this may violate the differential privacy condition, and stress that this implementation is just to confirm the theoretical properties of the proposed private median. 
Of course, more sophisticated methods to compute the outlyingness could be developed, such as those based on \citep{DYCKERHOFF2021}, but we leave that for future work. Indeed, given the caveat on using Langevin Dynamics to sample from the exponential mechanism mentioned above, an algorithm to compute the private projection-depth-based median is an interesting open problem. 
\begin{algorithm}[t]
\setstretch{1.2}
\caption{Exponential Mechanism via Langevin Dynamics}
\label{alg:EM}
\begin{algorithmic}
\Procedure{EM}{$X_1,\ldots,X_n,\epsilon,\eta,\tau,N,T,\omega$}
\State Set $\cU_N$ to be $N$ independent random vectors uniformly over $\bS^{d-1}$. 
\State Set $Y_0=\cN(0,I)$
\For{$i\in [T]$}
\State Set $O=\widehat O(x,\hat\nu)$
\If{$O>\tau$}
\State Set $\nabla O=0$
\Else
\State Set $\nabla O=\nabla \widehat O(x,\hat\nu)$
\EndIf
\State Set $Z_i=\cN(0,I)$
\State Set $Y_i=Y_{i-1}-\omega\frac{\epsilon}{4\eta}\nabla O+\sqrt{2\omega}Z_i$
\EndFor\\
\Return $Y_{T}$
\EndProcedure
\end{algorithmic}
\end{algorithm}
\begin{algorithm}[t]
\setstretch{1.2}
\caption{Test Mechanism}
\label{alg:Test}
\begin{algorithmic}
\Procedure{Test}{$X_1,\ldots,X_n,\epsilon,\delta,\eta,\tau,d$}
\State Set $k=1$
\State Set $\mathrm{Dev\_Cond}=$True
\State Set $\mathrm{Vol\_Cond}=$True
\While{$\mathrm{Dev\_Cond}$ and $\mathrm{Vol\_Cond}$}
\State Set $k=k+1$
\State Set $\mathrm{Vol\_Cond}=\mathrm{VR}_k<\delta$
\State Compute $\widehat S_{n,k}(\sigma),\ \widehat S_{n,k}(\mu),\ \hat b$
\State Set $\mathrm{Dev\_Cond}=\widehat \Delta_k<\eta$
\EndWhile
\State Set $\SM=k-2$. 
\State Generate $W\sim\lap(0,1)$\\
\Return $\SM+\frac{2}{\epsilon}>\frac{2\log(1/2\delta)}{\epsilon}$
\EndProcedure
\end{algorithmic}
\end{algorithm}
\begin{algorithm}[t]
\setstretch{1.2}
\caption{Private PDB Median}
\label{alg:o_can}
\begin{algorithmic}
\Procedure{Private\_Median}{$X_1,\ldots,X_n,\epsilon,\delta,\eta,\tau,d,T,\omega$}
\State Set $\mathrm{test}=$Test($X_1,\ldots,X_n,\epsilon,\delta,\eta,\tau,d$)
\If{$\mathrm{test}$}
\Return $\perp$
\Else{} \ 
\Return $\EM(X_1,\ldots,X_n,\epsilon,\eta,\tau,T,\omega)$
\EndIf
\EndProcedure
\end{algorithmic}
\end{algorithm}

To perform the test, it is clearly difficult to compute the safety margin in this case. 
As a result, we can make use of Lemma~\ref{thm::safety_margin_lower_bound}, Lemma~\ref{lem::ls_out} and Lemma~\ref{lem:vr_bound}.
Now, define $$\widehat{S}_{n,k}(T)=\sup_{\substack{\tilde{\nu}\in \am{\hat\nu}{k}\\ u\in \cU_N}}|T(\hat\nu_{u})-T(\tilde{\nu}_u)|.$$
We must first compute three quantities: $\widehat S_{n,k}(\mu),\widehat S_{n,k}(\sigma)$ and $\hat b=\inf_{\substack{\tilde{\nu}\in \am{\hat\nu}{k}\cup\{\hat\nu\}\\ u\in \cU_N}}\sigma(\hat\nu_u)$. 
First, it is easy to check that if $\mu=\med$, then
$$\widehat S_{n,k}(\mu)\leq \sup_{u\in \cU_N} [|(X^\top u)_{(\floor{(n+1)/2}+k)}-(X^\top u)_{(\floor{(n+1)/2})}|\vee |(X^\top u)_{(\floor{(n+1)/2})}-(X^\top u)_{(\floor{(n+1)/2}-k)}|].$$


In order to compute the $\widehat S_{n,k}(\sigma)$ where $\sigma=\mad$, we can first consider the local sensitivity of the median absolute deviation, for a univariate sample $Z_1,\ldots,Z_n$ with empirical measure $\hat\nu$. 
Let $Y_{i}(z)=|Z_i-z|$ and call this measure $\hat\nu_z$. 
Consider $\tilde\nu\in \cM(k,\hat\nu)$. 
First, note that $\mu(\tilde\nu)\in [Z_{(\floor{(n+1)/2}+k)},Z_{(\floor{(n+1)/2}-k)}]=[a_k,b_k]$. 
Now observe that
\begin{align*}
    \sup_{\tilde\nu\in \cM(k,\hat\nu)}|\sigma(\tilde\nu)-\sigma(\hat\nu)|&\leq \sup_{z\in [a_k,b_k]}\sup_{\tilde\nu\in \cM(k,\hat\nu_z)}|\med(\hat\nu_z)-\med(\tilde\nu)|\\
    &\leq \sup_{z\in [a_k,b_k]}|Y_{(\floor{(n+1)/2}+k)}(z)-Y_{(\floor{(n+1)/2})}(z)|\\
    &\hspace{100pt}\vee |Y_{(\floor{(n+1)/2}-k)}(z)-Y_{(\floor{(n+1)/2})}(z)|. 
\end{align*}
Note that the right-hand side of the above inequality is a piecewise linear function in $z$, with knots at the observations. 
Thus, it suffices to compute this function for the $2k$ observations in $[a_k,b_k]$. 
Thus, letting $Y_{i,u}(z)=|X_i^\top u-z|$,
$$\widehat S_{n,k}(\sigma)\leq \sup_{\substack{u\in \cU_N\\ z\in [a_k,b_k]}} |Y_{(\floor{(n+1)/2}+k),u}(z)-Y_{(\floor{(n+1)/2}),u}(z)|\vee |Y_{(\floor{(n+1)/2}-k),u}(z)-Y_{(\floor{(n+1)/2}),u}(z)|.$$
Similarly,
$\hat b \leq  \inf_{u\in \cU_N}\inf_{ z\in [a_k,b_k]}|Y_{(\floor{(n+1)/2}),u}(z)|.$ 

Next, we can compute $\widehat \Delta_k=((\tau+\eta) \widehat S_{n,k}(\sigma)+\widehat S_{n,k}(\mu))/\hat b$. 
The last step is to compute the volume condition in Lemma~\ref{thm::safety_margin_lower_bound} using Lemma~\ref{lem:vr_bound} and check if 
\begin{equation*}
\mathrm{VR}_k\coloneqq\exp\left(-\epsilon \tau/4\eta+\epsilon(1/2+\hat\alpha_{\mu}/4\inf_{\ubsd} \hat\sigma_u\eta)+d\right)\left(\frac{\sup_{\ubsd} \hat\sigma_u (\tau+2\eta)+\hat\alpha_{\mu}}{\inf_{\ubsd} \hat\sigma_u4\eta d\epsilon}\right)^d\leq \delta.
\end{equation*}
All the described methods are summarized in Algorithms~\ref{alg:EM}--\ref{alg:o_can}.
\section{Proofs from Section~\ref{sec::PD}}\label{app:proofs}
We prove the main theorems with a series of lemmas. 
Let $\hat\alpha_{\mu}=\sup_{\ubsd}\hat\mu_u-\inf_{\ubsd}\hat\mu_u$ and $\alpha_{\mu}=\sup_{u\in\bS^{d-1}}\mu_u-\inf_{\ubsd}\mu_u$. 
The first lemma bounds the values of $\mu$ and $\sigma$ when computed on contaminated datasets. 
\begin{lemma}
    \begin{enumerate}
        \item Assuming Conditions~\ref{cond::LS_Bound}-\ref{cond::bounded-parameters}: \begin{equation}
            \Prrr{\inf_{\substack{u\in \bS^{d-1}\\\tilde{\nu}\in \{\hat\nu_u\}\cup\am{\hat\nu_u}{k}}} \sigma(\tilde\nu) \leq \frac{c_1}{2}} \lesssim a_{1}e^{-a_{2}nc_1/4}+\kappa_{n}+\zeta_{n,\sigma}(\frac{c_1}{4}). \label{eq::E2C_bound}
        \end{equation}
    \item Assuming $\alpha_\mu \leq 2c_3$ and that Condition~\ref{cond::Pop_Bound} holds: \begin{equation}
        \Prr{\hat\alpha_\mu > 4c_3} \lesssim \zeta_{n,\mu}(c_3). \label{eq::E3C_bound}
    \end{equation}
    \item Assuming Conditions~\ref{cond::Pop_Bound} and~\ref{cond::bounded-parameters}: \begin{equation}
        \Prr{\sup_{u\in \bS^{d-1}}\sigma(\hat\nu_{u})\geq 2c_2}\lesssim \zeta_{n,\sigma}(c_2).\label{eq::E4C_bound}
    \end{equation}
    \item Assuming Conditions~\ref{cond::Pop_Bound} and~\ref{cond::bounded-parameters}:\begin{equation}
        \Prr{\inf_{u\in \bS^{d-1}}\sigma(\hat\nu_{u})\leq c_1/2} \lesssim \zeta_{n,\sigma}(\frac{c_1}{2}).\label{eq::E7C_bound}
    \end{equation}
    
    \end{enumerate}
\end{lemma}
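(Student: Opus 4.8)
The plan is to derive all four bounds from one template: write $T$ evaluated on a (possibly contaminated) projected empirical measure as the population value $T(\nu_u)$ plus a ``sampling error'' term, controlled uniformly in $u$ by $S_n(T)$ via Condition~\ref{cond::Pop_Bound}, plus (where relevant) a ``contamination error'' term, controlled uniformly in $u$ and over $k$-modifications by $\tilde S_{n,k}(T)$ via Condition~\ref{cond::LS_Bound}; then intersect the corresponding good events and apply a union bound. Condition~\ref{cond::bounded-parameters} supplies the population anchors $c_1 \le \sigma(\nu_u) \le c_2$ for all $u \in \bS^{d-1}$, and in item~2 the hypothesis $\alpha_\mu \le 2c_3$ is used directly.

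For item~1, fix $u \in \bS^{d-1}$ and $\tilde\nu \in \widetilde{\cM}_1(\hat\nu_u,k)$ with $k \le 3$. I would first note that any such univariate $k$-modification of $\hat\nu_u$ is the $u$-projection of some $d$-dimensional $k$-modification of $\hat\nu$ (keep the $n-k$ unmodified rows and choose the $k$ new rows to be arbitrary points with the prescribed inner product with $u$), so $|\sigma(\tilde\nu) - \sigma(\hat\nu_u)| \le \tilde S_{n,k}(\sigma)$. Combining this with the triangle inequality and Condition~\ref{cond::Pop_Bound},
\[
\sigma(\tilde\nu) \ge \sigma(\nu_u) - |\sigma(\tilde\nu) - \sigma(\hat\nu_u)| - |\sigma(\hat\nu_u) - \sigma(\nu_u)| \ge c_1 - \tilde S_{n,k}(\sigma) - S_n(\sigma),
\]
uniformly in $u$ and $\tilde\nu$ (the $\tilde\nu = \hat\nu_u$ case being the $k=0$ instance), so the infimum on the left of \eqref{eq::E2C_bound} is at least $c_1 - \tilde S_{n,k}(\sigma) - S_n(\sigma)$. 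On the event $\{\tilde S_{n,k}(\sigma) < c_1/4\} \cap \{S_n(\sigma) < c_1/4\}$ this exceeds $c_1/2$, so the probability in \eqref{eq::E2C_bound} is at most $\Pr(\tilde S_{n,k}(\sigma) \ge c_1/4) + \Pr(S_n(\sigma) \ge c_1/4)$, which by Conditions~\ref{cond::LS_Bound} and~\ref{cond::Pop_Bound} is $\lesssim a_1 e^{-a_2 n c_1/4} + \kappa_n + \zeta_{n,\sigma}(c_1/4)$.

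Items~2--4 use only the sampling-error half of the template. For item~2, from $\hat\mu_u = \mu_u + (\hat\mu_u - \mu_u)$ and Condition~\ref{cond::Pop_Bound} one gets $\sup_u \hat\mu_u \le \sup_u \mu_u + S_n(\mu)$ and $\inf_u \hat\mu_u \ge \inf_u \mu_u - S_n(\mu)$, hence $\hat\alpha_\mu \le \alpha_\mu + 2S_n(\mu) \le 2c_3 + 2S_n(\mu)$, so $\Pr(\hat\alpha_\mu > 4c_3) \le \Pr(S_n(\mu) \ge c_3) \lesssim \zeta_{n,\mu}(c_3)$. For items~3 and~4, the bound $|\sigma(\hat\nu_u) - \sigma(\nu_u)| \le S_n(\sigma)$ together with Condition~\ref{cond::bounded-parameters} gives $\sup_u \sigma(\hat\nu_u) \le c_2 + S_n(\sigma)$ and $\inf_u \sigma(\hat\nu_u) \ge c_1 - S_n(\sigma)$, whence $\Pr(\sup_u \sigma(\hat\nu_u) \ge 2c_2) \le \Pr(S_n(\sigma) \ge c_2) \lesssim \zeta_{n,\sigma}(c_2)$ and $\Pr(\inf_u \sigma(\hat\nu_u) \le c_1/2) \le \Pr(S_n(\sigma) \ge c_1/2) \lesssim \zeta_{n,\sigma}(c_1/2)$.

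None of these steps is genuinely hard; the only point needing care is the reduction in item~1 from univariate $k$-modifications of $\hat\nu_u$ to $d$-dimensional $k$-modifications of $\hat\nu$, so that the supremum defining $\tilde S_{n,k}(\sigma)$ dominates $|\sigma(\tilde\nu) - \sigma(\hat\nu_u)|$ uniformly in $u$, together with the bookkeeping of how the infimum over $u$ in \eqref{eq::E2C_bound} interacts with the suprema over $u$ built into $S_n$ and $\tilde S_{n,k}$. Everything else is the triangle inequality and a union bound over at most three events.
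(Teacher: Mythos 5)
Your proof is correct and follows essentially the same route as the paper's: each item is a triangle-inequality decomposition anchored at the population value via Condition~\ref{cond::bounded-parameters}, with the deviation terms controlled by Conditions~\ref{cond::LS_Bound} and~\ref{cond::Pop_Bound} and a union bound. You are in fact slightly more careful than the paper in item~1, where you explicitly justify that the supremum over univariate $k$-modifications of $\hat\nu_u$ is dominated by $\tilde S_{n,k}(\sigma)$ (whose defining supremum ranges over $d$-dimensional $k$-modifications of $\hat\nu$), a reduction the paper uses implicitly.
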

\begin{proof}
\textbf{Proof of \eqref{eq::E2C_bound}:} \begin{align*}
    \inf_{\substack{u\in \bS^{d-1}\\\tilde{\nu}\in \{\hat\nu_u\}\cup\am{\hat\nu_u}{k}}} \sigma(\tilde\nu) &= \inf_{\substack{u\in \bS^{d-1}\\\tilde{\nu}\in \{\hat\nu_u\}\cup\am{\hat\nu_u}{k}}} (\sigma(\tilde\nu) - \sigma(\hat\nu) + \sigma(\hat\nu)) \\
    &\geq \inf_{\substack{u\in \bS^{d-1}\\\tilde{\nu}\in \{\hat\nu_u\}\cup\am{\hat\nu_u}{k}}} (\sigma(\tilde\nu) - \sigma(\hat\nu)) + \inf_{u\in \bS^{d-1}}\sigma(\hat\nu) \\
    &\geq \inf_{\substack{u\in \bS^{d-1}\\\tilde{\nu}\in \{\hat\nu_u\}\cup\am{\hat\nu_u}{k}}} (\sigma(\tilde\nu) - \sigma(\hat\nu)) + \inf_{u\in \bS^{d-1}}(\sigma(\hat\nu) - \sigma(\nu)) + \inf_{u\in \bS^{d-1}}\sigma(\nu)\\ 
    &\geq -\sup_{\substack{u\in \bS^{d-1}\\\tilde{\nu}\in \{\hat\nu_u\}\cup\am{\hat\nu_u}{k}}} |\sigma(\tilde\nu) - \sigma(\hat\nu)| - \sup_{u\in \bS^{d-1}}|\sigma(\hat\nu) - \sigma(\nu)| + c_1.
\end{align*}

The last inequality uses Condition~\ref{cond::bounded-parameters}. Then, if $\inf_{\substack{u\in \bS^{d-1}\\\tilde{\nu}\in \{\hat\nu_u\}\cup\am{\hat\nu_u}{k}}} \sigma(\tilde\nu) \leq c_1/2$, \begin{align*}
    -\sup_{\substack{u\in \bS^{d-1}\\\tilde{\nu}\in \{\hat\nu_u\}\cup\am{\hat\nu_u}{k}}} |\sigma(\tilde\nu) - \sigma(\hat\nu)| - \sup_{u\in \bS^{d-1}}|\sigma(\hat\nu) - \sigma(\nu)| + c_1 &\leq \frac{c_1}{2} \\
    \implies \sup_{\substack{u\in \bS^{d-1}\\\tilde{\nu}\in \{\hat\nu_u\}\cup\am{\hat\nu_u}{k}}} |\sigma(\tilde\nu) - \sigma(\hat\nu)| + \sup_{u\in \bS^{d-1}}|\sigma(\hat\nu) - \sigma(\nu)| &\geq \frac{c_1}{2}.
\end{align*} Taken together, \begin{align*}
    \Prr{\sup_{u\in \bS^{d-1}}\sigma(\hat\nu_{u})> 2c_2} &\leq \Prr{\sup_{\substack{u\in \bS^{d-1}\\\tilde{\nu}\in \{\hat\nu_u\}\cup\am{\hat\nu_u}{k}}} |\sigma(\tilde\nu) - \sigma(\hat\nu)| + \sup_{u\in \bS^{d-1}}|\sigma(\hat\nu) - \sigma(\nu)| \geq \frac{c_1}{2}} \\
    &\leq \Prr{\sup_{\substack{u\in \bS^{d-1}\\\tilde{\nu}\in \{\hat\nu_u\}\cup\am{\hat\nu_u}{k}}} |\sigma(\tilde\nu) - \sigma(\hat\nu)|\geq \frac{c_1}{4}} \\
    &\hspace{10em} + \Prr{\sup_{u\in \bS^{d-1}}|\sigma(\hat\nu) - \sigma(\nu)| \geq \frac{c_1}{4}} \\
    &\lesssim a_{1}e^{-a_{2}nc_1/4}+\kappa_{n}+\zeta_{n,\sigma}(\frac{c_1}{4}).
\end{align*}
The final inequality follows directly from Conditions~\ref{cond::LS_Bound} and~\ref{cond::Pop_Bound}.

\noindent\textbf{Proof of \eqref{eq::E3C_bound}:} Note that \begin{align*}
    \hat\alpha_{\mu} = \sup_{u\in\bS^{d-1}}\hat\mu_u-\inf_{u\in\bS^{d-1}}\hat\mu_u  &= \sup_{u\in\bS^{d-1}}(\hat\mu_u + \mu_u - \mu_u) + \sup_{u\in\bS^{d-1}}(-\hat\mu_u + \mu_u - \mu_u) \\
    &\leq  \sup_{u\in\bS^{d-1}}|\hat\mu_u - \mu_u| +  \sup_{u\in\bS^{d-1}} \mu_u + \sup_{u\in\bS^{d-1}}|\mu_u -\hat\mu_u| - \inf_{u\in\bS^{d-1}}\mu_u \\
    &= 2\sup_{u\in\bS^{d-1}}|\mu_u -\hat\mu_u| + a_\mu \\
    &\leq 2\left(\sup_{u\in\bS^{d-1}}|\mu_u -\hat\mu_u| + c_3\right),
\end{align*} 
where we used the fact that $\alpha_\mu \leq 2c_3$. As a result, applying Condition~\ref{cond::Pop_Bound}, 
\begin{align*}
    \Prr{\hat\alpha_{\mu}\geq 4c_3} &\leq \Prr{2\left(\sup_{u\in\bS^{d-1}}|\mu_u -\hat\mu_u| + c_3\right) > 4c_3}\lesssim \zeta_{n,\mu}(c_3).
\end{align*}
\noindent \textbf{Proof of \eqref{eq::E4C_bound}:} \begin{align*}
    \sup_{u\in \bS^{d-1}}\sigma(\hat\nu_{u}) &= \sup_{u\in \bS^{d-1}}\left(\sigma(\hat\nu_{u})-\sigma(\nu_u)+\sigma(\nu_u)\right) \\
    &\leq \sup_{u\in \bS^{d-1}}\left(\sigma(\hat\nu_{u})-\sigma(\nu_u)\right)+\sup_{u\in \bS^{d-1}}\sigma(\nu_u) \\
    &\leq \sup_{u\in \bS^{d-1}}|\sigma(\hat\nu_{u})-\sigma(\nu_u)|+c_2,
\end{align*}
where the final inequality relies on condition~\ref{cond::bounded-parameters}. Then, \begin{align*}
    \Prr{\sup_{u\in \bS^{d-1}}\sigma(\hat\nu_{u})\geq 2c_2} &\leq \Prr{ \sup_{u\in \bS^{d-1}}|\sigma(\hat\nu_{u})-\sigma(\nu_u)|+c_2 \geq 2c_2} \\
    &= \Prr{ \sup_{u\in \bS^{d-1}}|\sigma(\hat\nu_{u})-\sigma(\nu_u)| \geq c_2} \\
    &\lesssim \zeta_{n,\sigma}(c_2),
\end{align*} where the final result is a straightforward application of condition~\ref{cond::Pop_Bound}.

\noindent \textbf{Proof of \eqref{eq::E7C_bound}:} \begin{align*}
    \inf_{u\in \bS^{d-1}}\sigma(\hat\nu_{u}) &= \inf_{u\in \bS^{d-1}}\left(\sigma(\hat\nu_{u})-\sigma(\nu_u) + \sigma(\nu_u)\right) \\
    &\geq \inf_{u\in \bS^{d-1}}\left(\sigma(\hat\nu_{u})-\sigma(\nu_u)\right) + \inf_{u\in \bS^{d-1}}\left(\sigma(\nu_u)\right) \\
    &= -\sup_{u\in \bS^{d-1}}\left(\sigma(\nu_u)-\sigma(\hat\nu_{u})\right) + \inf_{u\in \bS^{d-1}}\left(\sigma(\nu_u)\right) \\
    &\geq -\sup_{u\in \bS^{d-1}}\left|\sigma(\nu_u)-\sigma(\hat\nu_{u})\right| + c_1,
\end{align*} where the final inequality relies on Condition~\ref{cond::bounded-parameters}. This means that \begin{align*}
    \Prr{\inf_{u\in \bS^{d-1}}\sigma(\hat\nu_{u}) \leq \frac{c_1}{2}} &\leq \Prr{-\sup_{u\in \bS^{d-1}}\left|\sigma(\nu_u)-\sigma(\hat\nu_{u})\right| + c_1 \leq \frac{c_1}{2}} \\
    &= \Prr{\sup_{u\in \bS^{d-1}}\left|\sigma(\nu_u)-\sigma(\hat\nu_{u})\right| \geq \frac{c_1}{2}}\\
    &\lesssim \zeta_{n,\sigma}(\frac{c_1}{2}),
\end{align*} where the final line is a simple application of Condition~\ref{cond::Pop_Bound}.
\end{proof}
The next lemma bounds the local $k$-sensitivity of the outlyingness function. 
Let $b_{k,u}^-= \inf_{\tilde{\nu}\in \{\hat\nu_u\}\cup\am{\hat\nu_u}{k}}\sigma(\tilde\nu)$. 
\begin{lemma}\label{lem::ls_out}
For all $\tau>0$, $n,d\geq 1$, $\hat\nu\in\widehat\cM(n,d)$ and $k\in[n]$ 
\begin{equation}
 \sup_{\substack{\tilde\nu\in\am{\hat\nu}{k}\\
 x\in A_{\tau,\tilde\nu}\cup A_{\tau,\hat\nu}}}|O(x,\hat{\nu})-O(x,\tilde{\nu})|\leq \frac{\tau  \tilde{S}_{n,k}(\sigma)+\tilde{S}_{n,k}(\mu)}{  \inf_{u\in\bS^{d-1}} b_{k,u}^-} \coloneqq \Delta_k.
\end{equation}
\end{lemma}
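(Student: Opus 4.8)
The plan is to bound the difference $|O(x,\hat\nu) - O(x,\tilde\nu)|$ pointwise by splitting it across the supremum over directions $u \in \bS^{d-1}$ and using the elementary inequality $|\sup_u g_u - \sup_u h_u| \le \sup_u |g_u - h_u|$. Thus it suffices to bound, for each fixed $u$,
\begin{equation*}
\left| \frac{|x^\top u - \hat\mu_u|}{\hat\sigma_u} - \frac{|x^\top u - \mu(\tilde\nu_u)|}{\sigma(\tilde\nu_u)} \right|,
\end{equation*}
uniformly over $x$ lying in the relevant level set and over $\tilde\nu \in \am{\hat\nu}{k}$. First I would write this single-direction difference over a common denominator $\hat\sigma_u \sigma(\tilde\nu_u)$, obtaining a numerator of the form $\big| |x^\top u - \hat\mu_u|\,\sigma(\tilde\nu_u) - |x^\top u - \mu(\tilde\nu_u)|\,\hat\sigma_u \big|$. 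Adding and subtracting $|x^\top u - \hat\mu_u|\,\hat\sigma_u$ inside, and then applying the triangle inequality (both the ordinary one and the reverse triangle inequality $\big||a|-|b|\big| \le |a-b|$ applied to $|x^\top u - \hat\mu_u| - |x^\top u - \mu(\tilde\nu_u)|$), the numerator is bounded by
\begin{equation*}
|x^\top u - \hat\mu_u|\,|\sigma(\tilde\nu_u) - \hat\sigma_u| + |\mu(\tilde\nu_u) - \hat\mu_u|\,\hat\sigma_u.
\end{equation*}

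Next I would control each factor. The deviations $|\sigma(\tilde\nu_u) - \hat\sigma_u|$ and $|\mu(\tilde\nu_u) - \hat\mu_u|$ are each at most $\tilde S_{n,k}(\sigma)$ and $\tilde S_{n,k}(\mu)$ respectively, by definition of the projected sensitivity. The crucial factor is $|x^\top u - \hat\mu_u|$, which must be bounded using the constraint that $x$ lies in $A_{\tau,\hat\nu} \cup A_{\tau,\tilde\nu}$. If $x \in A_{\tau,\hat\nu}$ then $O_u(x,\hat\nu_u) \le O(x,\hat\nu) \le \tau$, so $|x^\top u - \hat\mu_u| \le \tau \hat\sigma_u$. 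If instead $x \in A_{\tau,\tilde\nu}$, then $|x^\top u - \mu(\tilde\nu_u)| \le \tau\,\sigma(\tilde\nu_u)$, and so $|x^\top u - \hat\mu_u| \le \tau\,\sigma(\tilde\nu_u) + |\mu(\tilde\nu_u) - \hat\mu_u| \le \tau\,\sigma(\tilde\nu_u) + \tilde S_{n,k}(\mu)$; I would then need to absorb the slack, which is why the stated bound has the clean form it does only after dividing by the infimum of $b_{k,u}^-$ — dividing the numerator bound by $\hat\sigma_u\,\sigma(\tilde\nu_u)$ and noting $\hat\sigma_u \ge b_{k,u}^-$ and $\sigma(\tilde\nu_u) \ge b_{k,u}^-$ (both $\hat\nu_u$ and $\tilde\nu_u$ are among the measures over which $b_{k,u}^-$ takes its infimum) collapses everything to $(\tau \tilde S_{n,k}(\sigma) + \tilde S_{n,k}(\mu)) / \inf_u b_{k,u}^-$.

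I expect the main obstacle to be bookkeeping the two cases $x \in A_{\tau,\hat\nu}$ versus $x \in A_{\tau,\tilde\nu}$ carefully enough that the bound $|x^\top u - \hat\mu_u| \le \tau \hat\sigma_u$ holds in a unified way — in the second case one does not immediately get $\tau\hat\sigma_u$ but rather $\tau\sigma(\tilde\nu_u)$ plus a sensitivity term, and one must check that after dividing by $\hat\sigma_u \sigma(\tilde\nu_u)$ and bounding denominators below by $b_{k,u}^-$, all the extra terms still fit inside $\Delta_k$ without an extra additive constant. A clean way around this is to observe that by symmetry of the roles (the sensitivity quantities $\tilde S_{n,k}$ are symmetric in $\hat\nu$ and $\tilde\nu$ up to the fact that $\tilde\nu$ ranges over $\am{\hat\nu}{k}$), one may always reduce to the case where the numerator's leading factor is the outlyingness-bounded one; then take the supremum over $u$ and over admissible $\tilde\nu$ and $x$ at the very end. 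Finally, I would take $\sup$ over $u$, $x$, and $\tilde\nu$ on both sides to conclude, noting the bound is uniform because $\inf_u b_{k,u}^-$ does not depend on the particular $x$ or $\tilde\nu$.
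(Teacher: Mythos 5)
The proposal is correct and takes essentially the same approach as the paper: both reduce $|O(x,\hat\nu)-O(x,\tilde\nu)|$ to a single-direction bound via $|\sup_u g_u - \sup_u h_u|\le\sup_u|g_u-h_u|$, put the difference over the common denominator $\hat\sigma_u\sigma(\tilde\nu_u)$, and then split on whether $x\in A_{\tau,\hat\nu}$ or $x\in A_{\tau,\tilde\nu}$, using the symmetric algebraic decomposition in the second case so that the factor multiplying $\tilde S_{n,k}(\sigma)$ is $|x^\top u-\mu(\tilde\nu_u)|\le\tau\sigma(\tilde\nu_u)$ rather than $|x^\top u-\hat\mu_u|$; this avoids the cross term you correctly flag. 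The paper simply performs that symmetric decomposition from the outset for the two cases rather than first observing the slack, but the content is identical.
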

\begin{proof}
We use the fact that 
$$\sup_{\substack{\tilde\nu\in\am{\hat\nu}{k}\\
 x\in A_{\tau,\tilde\nu}\cup A_{\tau,\hat\nu}}}|O(x,\hat{\nu})-O(x,\tilde{\nu})|\leq \sup_{\substack{\tilde\nu\in\am{\hat\nu}{k}\\ u\in \bS^{d-1}\\ x\in A_{\tau,\tilde\nu}\cup A_{\tau,\hat\nu}}}|O_u(x,\hat{\nu})-O_u(x,\tilde{\nu})|,$$
and bound the right-hand side. 
Observe that for all $\tau>0$ and all $\tilde\nu\in\am{\hat\nu}{k}$ it holds that
\begin{align*}
  \sup_{\substack{u\in \bS^{d-1}\\ x\in A_{\tau,\hat\nu}}}|O_u(x,\hat{\nu})-O_u(x,\tilde{\nu})|&= \sup_{\substack{u\in \bS^{d-1}\\ x\in A_{\tau,\hat\nu}}}\left|\frac{x^\top u-\hat{\mu}_{u}}{\hat{\sigma}_u}-\frac{x^\top u-\mu(\tilde{\nu}_u)}{\sigma(\tilde{\nu}_u)}\right| \\
 &\leq \sup_{u\in\bS^{d-1}}\frac{1}{\hat{\sigma}_u\sigma(\tilde{\nu}_u)}\left(\left( \sup_{x\in A_{\tau,\tilde\nu}}|x^\top u-\hat\mu_u| \right)\tilde{S}_{n,k}(\sigma)+  \sigma(\hat{\nu}_u) \tilde{S}_{n,k}(\mu)\right)\\
  &\leq \frac{\tau  \tilde{S}_{n,k}(\sigma)+\tilde{S}_{n,k}(\mu)}{  \inf_{u\in\bS^{d-1}}b^-_{k,u} }.
\end{align*}
Using the same logic, for all $\tau>0$ and all $\tilde\nu\in\am{\hat\nu}{k}$ we can write:
\begin{align*}
 \sup_{\substack{u\in \bS^{d-1}\\ x\in A_{\tau,\tilde\nu}}}|O_u(x,\hat{\nu})-O_u(x,\tilde{\nu})| &\leq \frac{\tau  \tilde{S}_{n,k}(\sigma)+\tilde{S}_{n,k}(\mu)}{ \inf_{u\in\bS^{d-1}} \hat\sigma_u }\leq \frac{\tau  \tilde{S}_{n,k}(\sigma)+\tilde{S}_{n,k}(\mu)}{\inf_{u\in\bS^{d-1}}b^-_{k,u}}.
\end{align*}
Combining these two results yields that, for all $\tau>0$ and all $\tilde\nu\in\am{\hat\nu}{k}$, it holds that 
\begin{equation*}
  \sup_{\substack{u\in \bS^{d-1}\\ x\in A_{\tau,\tilde\nu}\cup A_{\tau,\hat\nu}}}|O_u(x,\hat{\nu})-O_u(x,\tilde{\nu})|\leq \frac{\tau  \tilde{S}_{n,k}(\sigma)+\tilde{S}_{n,k}(\mu)}{  \inf_{u\in\bS^{d-1}} b_{k,u}^-} . \qedhere
\end{equation*}
\end{proof}
%
The next lemma bounds the change in volume of $A_\tau$, when $\tau$ is slightly increased. 
\begin{lemma}\label{lem:vr_bound}
For $\phi=O$, and all $\hat\nu\in\widehat\cM_1(n,d)$ and $\tau,\eta,y>0$ such that $\inf_{\ubsd} \hat\sigma_u (\tau-y-2\eta)-\hat\alpha_{\mu}>0$, it holds that
{\small
\begin{equation*}
\frac{\vol(A_{\tau+2\eta,\hat\nu}\cap A^c_{\tau-2\eta,\hat\nu})}{\vol(A_{\tau-y-2\eta,\hat{\nu}})}\leq  \left(\frac{\sup_{\ubsd} \hat\sigma_u (\tau+2\eta)+\hat\alpha_{\mu}}{\inf_{\ubsd} \hat\sigma_u (\tau-y-2\eta)-\hat\alpha_{\mu}}\right)^d-\left(\frac{\inf_{\ubsd} \hat\sigma_u (\tau-2\eta)-\hat\alpha_{\mu}}{\inf_{\ubsd} \hat\sigma_u (\tau-y-2\eta)-\hat\alpha_{\mu}}\right)^d.
\end{equation*}}
\end{lemma}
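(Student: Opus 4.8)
\textbf{Proof proposal for Lemma~\ref{lem:vr_bound}.}

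The plan is to sandwich each of the three level sets $A_{\tau',\hat\nu}=\{x\colon O(x,\hat\nu)\le \tau'\}$ between two Euclidean balls, and then compute the volumes of those balls explicitly. The key observation is that for any $x$ and any $u\in\bS^{d-1}$, the definition of outlyingness gives $|x^\top u-\hat\mu_u|\le O(x,\hat\nu)\,\hat\sigma_u$, so if $O(x,\hat\nu)\le\tau'$ then $|x^\top u|\le \tau'\hat\sigma_u+|\hat\mu_u|\le \sup_{\ubsd}\hat\sigma_u\,\tau'+\hat\alpha_\mu$ (after recentering so that $\inf_{\ubsd}\hat\mu_u$ and $\sup_{\ubsd}\hat\mu_u$ straddle $0$, which is harmless since the volume ratio is translation invariant; more precisely one bounds $|x^\top u - \hat\mu_u|$ and absorbs the spread $\hat\alpha_\mu$). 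Taking the supremum over $u$ shows $\|x\|\le \sup_{\ubsd}\hat\sigma_u\,\tau'+\hat\alpha_\mu$, i.e. $A_{\tau',\hat\nu}$ is contained in a ball of that radius. Conversely, picking the particular direction $u=x/\|x\|$ shows $O(x,\hat\nu)\ge (\|x\|-\hat\alpha_\mu)/\inf_{\ubsd}\hat\sigma_u$, so $\|x\|\le \inf_{\ubsd}\hat\sigma_u\,\tau'+\hat\alpha_\mu$ forces $O(x,\hat\nu)\le\tau'$; hence $A_{\tau',\hat\nu}$ contains the ball of radius $\inf_{\ubsd}\hat\sigma_u\,\tau'-\hat\alpha_\mu$ (when this is positive).

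With these inclusions in hand the rest is bookkeeping. First I would bound the numerator: $A_{\tau+2\eta,\hat\nu}\cap A^c_{\tau-2\eta,\hat\nu}\subseteq B(0,R_+)\setminus B(0,R_-)$ where $R_+=\sup_{\ubsd}\hat\sigma_u(\tau+2\eta)+\hat\alpha_\mu$ and $R_-=\inf_{\ubsd}\hat\sigma_u(\tau-2\eta)-\hat\alpha_\mu$, using the outer inclusion on $A_{\tau+2\eta,\hat\nu}$ and the inner inclusion on $A_{\tau-2\eta,\hat\nu}$ (so its complement lies outside $B(0,R_-)$). This gives $\vol(A_{\tau+2\eta,\hat\nu}\cap A^c_{\tau-2\eta,\hat\nu})\le \omega_d(R_+^d-R_-^d)$, where $\omega_d$ is the volume of the unit ball. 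Then I would lower-bound the denominator using the inner inclusion on $A_{\tau-y-2\eta,\hat\nu}$: $\vol(A_{\tau-y-2\eta,\hat\nu})\ge \omega_d R_0^d$ with $R_0=\inf_{\ubsd}\hat\sigma_u(\tau-y-2\eta)-\hat\alpha_\mu$, which is positive precisely by the hypothesis $\inf_{\ubsd}\hat\sigma_u(\tau-y-2\eta)-\hat\alpha_\mu>0$. Dividing, the $\omega_d$ cancels and I obtain
$$
\frac{\vol(A_{\tau+2\eta,\hat\nu}\cap A^c_{\tau-2\eta,\hat\nu})}{\vol(A_{\tau-y-2\eta,\hat{\nu}})}\le \frac{R_+^d-R_-^d}{R_0^d}=\left(\frac{R_+}{R_0}\right)^d-\left(\frac{R_-}{R_0}\right)^d,
$$
which is exactly the claimed bound.

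The only mild subtlety — and the step I would be most careful about — is the handling of the centering term $\hat\alpha_\mu=\sup_{\ubsd}\hat\mu_u-\inf_{\ubsd}\hat\mu_u$ rather than $\sup_{\ubsd}|\hat\mu_u|$. The clean way is to note that the volume ratio on the left-hand side is invariant under translating the data (equivalently, under replacing each $\hat\mu_u$ by $\hat\mu_u-c^\top u$ for a fixed $c\in\rdd$), so one may assume a centering in which $\sup_{\ubsd}|\hat\mu_u|\le \hat\alpha_\mu$ — e.g. translate so that $\inf_{\ubsd}\hat\mu_u=-\sup_{\ubsd}\hat\mu_u$; then $\sup_{\ubsd}|\hat\mu_u|=\tfrac12\hat\alpha_\mu\le\hat\alpha_\mu$. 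Alternatively, avoid translation entirely: for the outer bound write $\|x\|=\sup_u x^\top u$ and $x^\top u=(x^\top u-\hat\mu_u)+\hat\mu_u\le O(x,\hat\nu)\hat\sigma_u+\sup_u\hat\mu_u$, while for the inner bound $O(x,\hat\nu)\ge \sup_u (x^\top u-\hat\mu_u)/\hat\sigma_u\ge (\|x\|-\sup_u\hat\mu_u)/\inf_u\hat\sigma_u$ — but then one also needs a matching lower control using $\inf_u\hat\mu_u$, and combining the two sides is what produces the spread $\hat\alpha_\mu$. Either route is routine; once the ball inclusions are pinned down, no further work is needed beyond the volume computation above.
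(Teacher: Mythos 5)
Your overall strategy is exactly the paper's: sandwich each level set $A_{t,\hat\nu}$ between concentric balls with radii $a(t)=\inf_{\ubsd}\hat\sigma_u\,t-\hat\alpha_\mu$ and $b(t)=\sup_{\ubsd}\hat\sigma_u\,t+\hat\alpha_\mu$, then bound the numerator by a spherical shell and the denominator from below by the inner ball, so the unit-ball volume constant cancels. The paper avoids the recentering discussion by placing all three balls at the common center $v\hat\mu_v$ for a fixed $v\in\bS^{d-1}$; your translation-invariance workaround leads to the same radii, and your outer inclusion and the final volume bookkeeping are correct.

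There is, however, a genuine slip in the sentence justifying the inner inclusion. You write that choosing $u=x/\norm{x}$ shows $O(x,\hat\nu)\ge(\norm{x}-\hat\alpha_\mu)/\inf_{\ubsd}\hat\sigma_u$, and then that $\norm{x}\le\inf_{\ubsd}\hat\sigma_u\,\tau'+\hat\alpha_\mu$ ``forces'' $O(x,\hat\nu)\le\tau'$. This doesn't work for two reasons. First, a single-direction evaluation only gives $O(x,\hat\nu)\ge|\norm{x}-\hat\mu_{x/\norm{x}}|/\hat\sigma_{x/\norm{x}}$, and to turn that into something uniform you must use $\hat\sigma_{x/\norm{x}}\le\sup_{\ubsd}\hat\sigma_u$, not $\inf$; so the stated lower bound on $O$ is itself incorrect. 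Second, and more importantly, a lower bound on $O$ can only establish outer containment ($A_{\tau',\hat\nu}\subset B$), never the inner containment $B\subset A_{\tau',\hat\nu}$ you need here. What you want is the complementary upper bound
$$O(x,\hat\nu)=\sup_{\ubsd}\frac{|x^\top u-\hat\mu_u|}{\hat\sigma_u}\le\frac{\norm{x}+\sup_{\ubsd}|\hat\mu_u|}{\inf_{\ubsd}\hat\sigma_u}\le\frac{\norm{x}+\hat\alpha_\mu}{\inf_{\ubsd}\hat\sigma_u},$$
which immediately gives $\norm{x}\le\inf_{\ubsd}\hat\sigma_u\,\tau'-\hat\alpha_\mu\implies O(x,\hat\nu)\le\tau'$, i.e.\ $B\bigl(\cdot,a(\tau')\bigr)\subset A_{\tau',\hat\nu}$. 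With that replacement your argument goes through; the conclusion you state is right, only the justification needs flipping.
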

\begin{proof}
Let $a(t)=\inf_{\ubsd} \hat\sigma_u t-\hat\alpha_{\mu}$ and $b(t)=\sup_{\ubsd} \hat\sigma_u t+\hat\alpha_{\mu}$.
We first show that 
$\vol(B_{a(t)})\leq \vol(A_{t,\hat\nu})\leq\vol(B_{b(t)}).$ 
Beginning with the upper bound, for any $z\in\rdd$,
{\small
\begin{align*}
   A_{t,\hat\nu}&= \{x\colon \sup_{\ubsd}\frac{|x^\top u-\hat\mu_u|}{\hat\sigma_u}\leq t\}\subset \{x\colon \sup_{\ubsd}|x^\top u-z^\top u|\leq \sup_{\ubsd} \hat\sigma_u t+\sup_{u\in\bS^{d-1}}|z^\top u-\hat\mu_u|\}.
\end{align*}
}
In particular, taking $z=v\hat\mu_v$ for $v\in\bS^{d-1}$, 
\begin{align*}
   A_{t,\hat\nu}&\subset\{|x^\top u-\hat\mu_v v^\top u|\leq b(t)\}=\{x\colon \norm{x-\hat\mu_v v}\leq b(t)\}=B_{b(t)}(\hat\mu_v v).
\end{align*}
It follows that $\vol(A_{t,\hat\nu})\leq \vol(B_{b(t)}).$
Next, we show the analogous lower bound. To this end, note that for any $z\in\rdd$
\begin{align*}
   A_{t,\hat\nu}&= \{x\colon \sup_{u\in\bS^{d-1}}\frac{|x^\top u-\hat\mu_u|}{\hat\sigma_u}\leq t\}\supset \{x\colon \sup_{u\in\bS^{d-1}}|x^\top u-z^\top u|\leq \inf_{\ubsd} \hat\sigma_u t-\sup_{u\in\bS^{d-1}}|z^\top u-\hat\mu_u|\}.
\end{align*}
Take $z=v\hat\mu_v$, where $v\in\bS^{d-1}$, yielding 
$A_{t,\hat\nu}\supset \{x\colon \norm{x-v\hat\mu_v}\leq a(t)\}=B_{a(t)}(v\hat\mu_v).$ It follows that $\vol(A_{t,\hat\nu})\geq \vol(B_{a(t)}).$
Now, we apply the upper and lower bounds, in conjunction with the assumption that $a(\tau-y-2\eta)=\inf_{\ubsd} \hat\sigma_u (\tau-y-2\eta)-\hat\alpha_{\mu}>0$, which results in
\begin{align*}
    \frac{\vol(A_{\tau+2\eta,\hat\nu}\cap A^c_{\tau-2\eta,\hat\nu})}{\vol(A_{\tau-y-2\eta,\hat{\nu}})}&\leq \frac{\vol(B_{b(\tau+2\eta)}\cap B^c_{a(\tau-2\eta)})}{\vol(B_{a(\tau-y-2\eta)})}\\
    &\leq \left(\frac{\sup_{\ubsd} \hat\sigma_u (\tau+2\eta)+\hat\alpha_{\mu}}{\inf_{\ubsd} \hat\sigma_u (\tau-y-2\eta)-\hat\alpha_{\mu}}\right)^d\\
    &\hspace{1.7in} -\left(\frac{\inf_{\ubsd} \hat\sigma_u (\tau-2\eta)-\hat\alpha_{\mu}}{\inf_{\ubsd} \hat\sigma_u (\tau-y-2\eta)-\hat\alpha_{\mu}}\right)^d. \qedhere
\end{align*}
\end{proof}
We can now prove Theorem~\ref{thm::no-reply}.
\begin{proof}[Proof of Theorem~\ref{thm::no-reply}]
First, note that $\mu$ and $\sigma$ may satisfy Conditions~\ref{cond::LS_Bound} 
for different parameters, e.g., different sequences $\kappa_n$. 
However, we can define a set of joint parameters $\kappa_{n},a_1,a_2$ for which the bound in Condition~\ref{cond::LS_Bound} holds for both $\mu$ and $\sigma$, which we will use throughout the proof. 

An application of Lemma~\ref{lem::gen_bound_no_reply} with $\phi=O$, and subsequently taking $k=2$ yields:
\begin{align}
    \E{}{Z}&\geq 1-\Prr{E_1} \nonumber\\
   & \hspace{30pt}-\inf_{k\in [n]} \left[ \Prrr{\sup_{\substack{\tilde{\nu}\in \am{\hat\nu}{k+1}\\x\in A_{\tau+\eta,\hat{\nu}}\cup A_{\tau+\eta,\tilde{\nu}}}}|O(x,\hat{\nu})-O(x,\tilde{\nu})|>\eta}+G\left(\log\frac{1}{2\delta}-\frac{\epsilon k}{2}\right)\right]\nonumber \\
        \label{eqn::gen_lem_app}
        &\geq  1-\Prr{E_1}- \Prrr{\sup_{\substack{\tilde{\nu}\in \am{\hat\nu}{3}\\x\in A_{\tau+\eta,\hat{\nu}}\cup A_{\tau+\eta,\tilde{\nu}}}}|O(x,\hat{\nu})-O(x,\tilde{\nu})|>\eta}-\delta e^{\epsilon},
\end{align}
where $$E_1=\left\{\inf_{y>0}e^{-\epsilon y/4\eta} \frac{\vol(A_{\tau+2\eta,\hat\nu}\cap A^c_{\tau-2\eta,\hat\nu})}{\vol(A_{\tau-y-2\eta,\hat{\nu}})}> e^{-\epsilon/2}\delta\right\}.$$

We now bound the deviation of the outlyingness function with high probability. 
To this end, note that Lemma~\ref{lem::ls_out} implies that for any $k\in[n]$,
\begin{align*}
        \Prrr{\sup_{\substack{\tilde{\nu}\in \am{\hat\nu}{k}\\x\in A_{\tau+\eta,\hat{\nu}}\cup A_{\tau+\eta,\tilde{\nu}}}}|O(x,\hat{\nu})-O(x,\tilde{\nu})|>\eta}\leq   \Prr{\Delta_{k}>\eta},
\end{align*} 
where $$\Delta_{k} = \frac{\tau  \tilde{S}_{n,k}(\sigma)+\tilde{S}_{n,k}(\mu)}{  \inf_{u\in\bS^{d-1}} b_{k,u}^-}.$$ 
Define $E_2 = \left\{\inf_{u\in \bS^{d-1}}b_{k,u}^-\geq c_1/2\right\}$. Then, conditioning on $E_2$ we have that \[\Delta_k \leq \frac{2}{c_1}\left[\tau\tilde{S}_{n,k}(\sigma)+\tilde{S}_{n,k}(\mu)\right],\] and so 
\begin{align*}\Prr{\Delta_k \geq \eta | E_2} &\leq \Prr{\frac{2}{c_1}\left[\tau\tilde{S}_{n,k}(\sigma)+\tilde{S}_{n,k}(\mu)\right] \geq \eta|E_2} \\ 
&\leq \Prr{\frac{2}{c_1}\left[\tau\tilde{S}_{n,k}\right] \geq \eta/2|E_2} + \Prr{\frac{2}{c_1}\left[\tilde{S}_{n,k}(\mu)\right] \geq \eta/2|E_2} \\
&= \Prr{\tilde{S}_{n,k} \geq c_1\eta/4\tau|E_2} + \Prr{\tilde{S}_{n,k}(\mu) \geq c_1\eta/4|E_2}.\end{align*}

Thus, applying Condition~\ref{cond::LS_Bound} gives \begin{align}
     \Prr{\Delta_{k}>\eta|E_2}&\lesssim  \Prr{\tilde{S}_{n,k}(\mu)>c_1\eta/4 |E_2}+ \Prr{ \tilde{S}_{n,k}(\sigma)>c_1 \eta/4\tau|E_2}\nonumber\\
     \label{eqn::delta_bound_general}
     &\lesssim a_{1}e^{-a_{2}c_1n \eta/4(\tau\vee 1)}+\kappa_{n}. 
\end{align}

Note $\Prr{E_2^c}$ is bounded based on \eqref{eq::E2C_bound}. Combining these past three results yields \begin{align*}
    &\Prrr{\sup_{\substack{\tilde{\nu}\in \am{\hat\nu}{k}\\x\in A_{\tau+\eta,\hat{\nu}}\cup A_{\tau+\eta,\tilde{\nu}}}}|O(x,\hat{\nu})-O(x,\tilde{\nu})|>\eta}\\
    &\qquad \leq \Prr{\Delta_k>\eta|E_2}\Prr{E_2} +  \Prr{\Delta_k>\eta|E_2^c}\Prr{E_2^c} \\
    &\qquad \leq \Prr{\Delta_k>\eta|E_2} + \Prr{E_2^c} \\
    &\qquad \lesssim a_{1}\exp\left(-a_{2}\frac{c_1n}{4}\left[1 \wedge \frac{\eta}{\tau\vee 1}\right]\right)+\kappa_{n}+\zeta_{n,\sigma}(\frac{c_1}{4}).
\end{align*}

We next bound $\Prr{E_1}$. To do so we will make use of Lemma~\ref{lem:vr_bound}. In order to apply the result, we require that $\inf_{\ubsd} \hat\sigma_u (\tau-y-2\eta)-\hat\alpha_{\mu}>0$. Note that, given $E_2$ and the restriction that $\tau \geq 4\eta + 16c_3/c_1$, \[\inf_{\ubsd} \hat\sigma_u (\tau-y-2\eta)-\hat\alpha_{\mu} \geq \frac{c_1(\tau - y - 2\eta)}{2} - \hat\alpha_{\mu} \geq c_1\eta + 8c_3 - \frac{c_1}{2}y - \hat\alpha_{\mu}.\] 
If we take $E_3 = \left\{\hat\alpha_{\mu}\leq 4c_3\right\}$, then further conditioning on $E_3$ gives that, for all $y \in \mathcal{L}$, where $\mathcal{L} = (0,\tau-2\eta - 8c_3/c_1)$, $\inf_{\ubsd} \hat\sigma_u (\tau-y-2\eta)-\hat\alpha_{\mu} > 0$. Thus, \begin{align*}
  \frac{\vol(A_{\tau+2\eta,\hat\nu}\cap A^c_{\tau-2\eta,\hat\nu})}{\vol(A_{\tau-y-2\eta,\hat{\nu}})} &\leq  \left(\frac{\sup_{\ubsd} \hat\sigma_u (\tau+2\eta)+\hat\alpha_{\mu}}{\inf_{\ubsd} \hat\sigma_u (\tau-y-2\eta) -\hat\alpha_{\mu}}\right)^d\\
  &\qquad -\left(\frac{\inf_{\ubsd} \hat\sigma_u (\tau-2\eta)-\hat\alpha_{\mu}}{\inf_{\ubsd} \hat\sigma_u (\tau-y-2\eta)-\hat\alpha_{\mu}}\right)^d \\  
    &\leq  \left(\frac{\sup_{\ubsd} \hat\sigma_u (\tau+2\eta)+\hat\alpha_{\mu}}{\inf_{\ubsd} \hat\sigma_u (\tau-y-2\eta)-\hat\alpha_{\mu}}\right)^d.
\end{align*}

Define $E_4 = \left\{\sup_{u\in \bS^{d-1}}\sigma(\hat\nu_{u})\leq 2c_2\right\}$, so that given $E_5 = E_2\cap E_3 \cap E_4$, \[\left(\frac{\sup_{\ubsd} \hat\sigma_u (\tau+2\eta)+\hat\alpha_{\mu}}{\inf_{\ubsd} \hat\sigma_u (\tau-y-2\eta)-\hat\alpha_{\mu}}\right)^d \leq \left(\frac{2c_2(\tau + 2\eta) + 4c_3}{c_1(\tau-y-2\eta)/2-4c_3}\right)^d.\] Thus, taken together, \begin{align*}
    \Prr{E_1} &\leq \Prr{E_1|E_5} + \Prr{E_5^c}\\
    &\leq \Prr{\inf_{y \in \mathcal{L}} e^{-\epsilon y/4\eta}\left(\frac{2c_2(\tau + 2\eta) + 4c_3}{c_1(\tau-y-2\eta)/2-4c_3}\right)^d > e^{-\epsilon/2}\delta} + \Prr{E_5^c}\\
    &\leq \ind{\inf_{y \in \mathcal{L}} e^{\epsilon/2-\epsilon y/4\eta}\left(\frac{2c_2(\tau + 2\eta) + 4c_3}{c_1(\tau-y-2\eta)/2-4c_3}\right)^d > \delta}+ \Prr{E_5^c}.
\end{align*}

Taking $y = \tau/2$, the inequality in the indicator is never satisfied for any \[\delta > e^{\epsilon/2-\epsilon y/4\eta}\left(\frac{2c_2(\tau + 2\eta) + 4c_3}{c_1(\tau-y-2\eta)/2-4c_3}\right)^d = \exp\left\{\frac{\epsilon}{2} - \frac{\epsilon\tau}{8\eta} + d\log\left(\frac{2c_2(\tau + 2\eta) + 4c_3}{c_1\tau/4-c_1\eta-4c_3}\right)\right\}.\] Thus, for any such $\delta$, we have $\Prr{E_1} \leq \Prr{E_5^c}$. 

Using equations~\eqref{eq::E2C_bound}, \eqref{eq::E3C_bound}, and \eqref{eq::E4C_bound} we have bounds on $\Prr{E_2^C}$, $\Prr{E_3^C}$, and $\Prr{E_4^C}$ respectively. These three bounds combine to give \[\Prr{E_5^c} \lesssim a_{1}e^{-a_{2}nc_1/4}+\kappa_{n}+\zeta_{n,\sigma}(\frac{c_1}{4})\wedge \zeta_{n,\mu}( c_3).\] Finally,
\begin{align*}
        1-\E{}{Z} &\leq  \Prr{E_1}+ \Prrr{\sup_{\substack{\tilde{\nu}\in \am{\hat\nu}{3}\\x\in A_{\tau+\eta,\hat{\nu}}\cup A_{\tau+\eta,\tilde{\nu}}}}|O(x,\hat{\nu})-O(x,\tilde{\nu})|>\eta}+e^{\epsilon}\delta \\
        &\lesssim \Prr{E_5^c}+a_{1}\exp\left(-a_{2}\frac{c_1n}{4}\left[1 \wedge \frac{\eta}{\tau\vee 1}\right]\right)+\kappa_{n}+\zeta_{n,\sigma}(\frac{c_1}{4})+e^{\epsilon}\delta \\
        &\lesssim  a_{1}e^{-a_{2}nc_1/4}+\kappa_{n}+\zeta_{n,\sigma}(\frac{c_1}{4})\wedge \zeta_{n,\mu}( c_3)
        \\
        &\qquad +a_{1}\exp\left(-a_{2}\frac{c_1n}{4}\left[1 \wedge \frac{\eta}{\tau\vee 1}\right]\right)+\kappa_{n}+\zeta_{n,\sigma}(\frac{c_1}{4})+e^{\epsilon}\delta \\
        &\lesssim \exp\left(-a_{2}\frac{c_1n}{4}\left[1 \wedge \frac{\eta}{\tau\vee 1}\right]\right) +\kappa_n + \zeta_{n,\sigma}(\frac{c_1}{4}) + e^{\epsilon}\delta \qedhere.
\end{align*}
\end{proof} 
The proof of Theorem~\ref{thm::pd_acc2} relies on the following concentration inequality. 
\begin{lemma}\label{lem::pd_acc2}
If Conditions~\ref{cond::Pop_Bound} and~\ref{cond::bounded-parameters} hold, then for all $\eta,\epsilon,\delta>0$, $t>0$, and $n,d\geq 1$ then
\begin{multline*}
\Pr\left(\norm{\tilde{\theta}-\theta}\geq t\big|\tilde{\theta}\neq \perp\right)\\ \lesssim \inf_{\substack{0<s<c_1\tau/2-c_3\\0<r<c_1\tau/2-s-c_3}} \Bigg[\exp\left(\epsilon\frac{(c_1/2+2c_2)(c_3+s)}{4\eta c_1c_2} -\frac{\epsilon t}{8\eta c_2} +\frac{\epsilon r}{2c_1\eta}+\log\frac{(2c_2\tau +s+c_3)^d-t^d}{r^d} \right)\\+\zeta_{n,\mu}\left(s \wedge c_3\right)+\zeta_{n,\sigma}\left( c_1/2\right)\Bigg].
\end{multline*}
\end{lemma}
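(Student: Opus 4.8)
The plan is to treat $\tilde\theta$, conditioned on $\tilde\theta\neq\perp$, as a draw from $\EM_{\hat\nu}$ and to bound the resulting utility ratio on a single high-probability event for $\hat\nu$. Since given $\tilde\theta\neq\perp$ we have $\tilde\theta\sim\EM_{\hat\nu}$,
$$\Pr\big(\norm{\tilde\theta-\theta}\ge t \,\big|\, \hat\nu,\ \tilde\theta\neq\perp\big)=\frac{\int_{\{\norm{x-\theta}\ge t\}\cap A_{\tau,\hat\nu}}e^{-O(x,\hat\nu)\epsilon/4\eta}\,dx}{\int_{A_{\tau,\hat\nu}}e^{-O(x,\hat\nu)\epsilon/4\eta}\,dx},$$
so I would bound the numerator by ``small volume times the smallest achievable weight on the far-away set'' and bound the denominator from below by integrating over a small ball around $\theta$. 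All the deterministic estimates needed for this are run on the event $E=\{S_n(\mu)\le s\wedge c_3\}\cap\{S_n(\sigma)\le c_1/2\}$; by Condition~\ref{cond::Pop_Bound}, $\Pr(E^c)\lesssim\zeta_{n,\mu}(s\wedge c_3)+\zeta_{n,\sigma}(c_1/2)$, and after conditioning on $\tilde\theta\neq\perp$ this contributes the same quantity up to the (harmless, in every application of the lemma) factor $1/\E{}{Z}$.

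On $E$, Condition~\ref{cond::bounded-parameters} gives the direction-uniform bounds $c_1/2\le\hat\sigma_u\le 2c_2$ and $|\hat\mu_u-\theta^\top u|\le S_n(\mu)+|\mu_u-\theta^\top u|\le s+c_3$. From these I would extract a two-sided handle on $O(\cdot,\hat\nu)$: for $\norm{x-\theta}\ge t$, taking $u=(x-\theta)/\norm{x-\theta}$ gives $O(x,\hat\nu)\ge(t-s-c_3)/(2c_2)$; for $\norm{x-\theta}\le r$, $O(x,\hat\nu)\le 2(r+s+c_3)/c_1$, which is $<\tau$ precisely under the stated hypothesis $r<c_1\tau/2-s-c_3$, so $B_r(\theta)\subseteq A_{\tau,\hat\nu}$; and for any $x\in A_{\tau,\hat\nu}$, projecting onto $u=(x-\theta)/\norm{x-\theta}$ gives $\norm{x-\theta}\le\tau\hat\sigma_u+|\hat\mu_u-\theta^\top u|\le 2c_2\tau+s+c_3$, i.e.\ $A_{\tau,\hat\nu}\subseteq B_{2c_2\tau+s+c_3}(\theta)$.

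Putting these together, the numerator is at most $V_d\big((2c_2\tau+s+c_3)^d-t^d\big)\,e^{-\epsilon(t-s-c_3)/8\eta c_2}$ (with $V_d$ the volume of the unit ball; the exponential bound is still valid when $t<s+c_3$, since the exponent is then nonnegative while the integrand is at most $1$, and for $t>2c_2\tau+s+c_3$ the integration set is empty on $E$), while the denominator is at least $\int_{B_r(\theta)}e^{-O(x,\hat\nu)\epsilon/4\eta}\,dx\ge V_d r^d\,e^{-\epsilon(r+s+c_3)/2\eta c_1}$. Dividing, taking logarithms, and using $\tfrac1{8c_2}+\tfrac1{2c_1}=\tfrac{c_1/2+2c_2}{4c_1c_2}$ to collect the $(s+c_3)$-terms reproduces exactly the exponent of the claimed bound for fixed $s,r$; taking the infimum over $0<s<c_1\tau/2-c_3$ and $0<r<c_1\tau/2-s-c_3$ and adding $\Pr(E^c)$ completes the argument.

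The step I expect to be the main obstacle is the second one: arranging the two-sided control of $O(\cdot,\hat\nu)$ together with the sandwich $B_r(\theta)\subseteq A_{\tau,\hat\nu}\subseteq B_{2c_2\tau+s+c_3}(\theta)$ so that everything holds simultaneously on a single event whose failure probability is controlled by Condition~\ref{cond::Pop_Bound} alone — Condition~\ref{cond::LS_Bound} is not available in this lemma. Tracking how the admissible ranges of $s$ and $r$ are forced by the requirement $B_r(\theta)\subseteq A_{\tau,\hat\nu}$ (in particular that $\theta$ itself lies in $A_{\tau,\hat\nu}$), and handling the degenerate small- and large-$t$ regimes, also needs a little care but is routine once the geometry is in place.
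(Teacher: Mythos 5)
Your proposal is correct and follows essentially the same route as the paper's proof: condition on a high-probability event controlled by Condition~\ref{cond::Pop_Bound} so that $c_1/2\le\hat\sigma_u\le 2c_2$ and $|\hat\mu_u-\theta^\top u|\le s+c_3$ hold uniformly in $u$, use these to sandwich $A_{\tau,\hat\nu}$ between the balls $B_r(\theta)$ and $B_{2c_2\tau+s+c_3}(\theta)$ and to control $O(\cdot,\hat\nu)$ from both sides, and then bound the exponential-mechanism ratio by the volume/weight argument. The only difference is cosmetic — you package the event as $\{S_n(\mu)\le s\wedge c_3\}\cap\{S_n(\sigma)\le c_1/2\}$ rather than the paper's equivalent $E_7\cap E_{6,s}$ — and the ``obstacle'' you flag is in fact already resolved by your own setup.
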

\begin{proof}[Proof of Lemma~\ref{lem::pd_acc2}]
Let $\beta=\epsilon/4\eta$ and define the events $E_{6,s}=\{S_n(\mu)\leq s\}$ and $$E_7=\left\{\inf_{u\in \bS^{d-1}}\sigma(\hat\nu_{u})\geq c_1/2\right\}\cap \left\{\sup_{u\in \bS^{d-1}}\sigma(\hat\nu_{u})\leq 2c_2\right\}\cap \left\{\hat\alpha_{\mu}\leq 4c_3\right\}.$$
Condition~\ref{cond::Pop_Bound} directly gives that $\Prr{E_{6,s}^c} \lesssim \zeta_{n,\mu}(s)$. Using this, the identities established in \eqref{eq::E3C_bound}, \eqref{eq::E4C_bound}, and \eqref{eq::E7C_bound}, and noting that $c_1 \leq c_2$, yields that for all $s\geq 0$, we have
\begin{equation}\label{eqn::accu_step_1}
    \Prr{(E_7\cap E_{6,s})^c} \lesssim \zeta_{n,\mu}(s\wedge c_3)+ \zeta_{n,\mu}(c_1/2).
\end{equation}

Next, condition on $E_7\cap E_{6,s}$.
For any $\norm{x-\theta}\geq t$,
\begin{align}
   O(x,\hat{\nu}) &= \sup_{\ubsd} \frac{x^\top u - \theta^\top u + \theta^\top u - \hat{\mu}_{u}}{\sigma(\hat\nu_{u})} \nonumber\\ 
   &\geq \frac{\sup_{\ubsd} (x - \theta)^\top u -\sup_{\ubsd}|\hat\mu_u-\theta^\top u|}{\sup_{\ubsd}\hat\sigma_u}\nonumber\\
    &= \frac{\norm{x-\theta} -\sup_{\ubsd}|\hat\mu_u-\mu_u + \mu_u - \theta^\top u|}{\sup_{\ubsd}\hat\sigma_u} \nonumber\\
    &\geq \frac{\norm{x-\theta} - \sup_{\ubsd}|\hat\mu_u-\mu_u| - \sup_{\ubsd}|\mu_u - \theta^\top u|}{\sup_{\ubsd}\hat\sigma_u}\nonumber\\
    &\geq \frac{t-c_3-s}{2c_2}.\label{eq::O_lowerbound_ball}
\end{align} The last inequality relies on $E_{6,s}$ and Condition~\ref{cond::bounded-parameters}.
A similar argument shows that, for any $r>0$ and $x\in\rdd$ such that $\norm{x-\theta}\leq r$,
\begin{align}
   O(x,\hat{\nu}) &= \sup_{\ubsd} \frac{x^\top u - \theta^\top u + \theta^\top u - \hat{\mu}_{u}}{\sigma(\hat\nu_{u})} \nonumber\\ 
   &\leq \frac{\sup_{\ubsd}(x^\top u - \theta^\top u) + \sup_{\ubsd} (\theta^\top u - \hat{\mu}_{u})}{\inf_{\ubsd} \sigma(\hat\nu_{u})} \nonumber \\
   &\leq \frac{r +\sup_{\ubsd}|\theta^\top u-\mu_u|+\sup_{\ubsd}|\mu_u-\hat\mu_u|}{c_1/2} \nonumber \\ 
   &\leq \frac{r+c_3+s}{c_1/2}.\label{eq::O_upperbound_ball}
\end{align}
Therefore, taking $B_{r}(x)$ to be the ball of radius $r$ centered on $x$, then
\begin{equation}\label{eqn::in_ball}
B_{c_1\tau/2 -s-c_3}(\theta) \subset A_{\tau,\hat\nu} = \{x \mid O(x,\hat\nu) < \tau\}\subset B_{2c_2\tau +s+c_3}(\theta).
\end{equation} Here, the first inclusion follows from \eqref{eq::O_upperbound_ball} and the second from \eqref{eq::O_lowerbound_ball}.

Next, note that by definition \[\Prr{\norm{\tilde{\theta}-\theta}\geq t|E_7\cap E_{6,s}} = \frac{ \int_{B_t^c(\theta)\cap A_{\tau,\hat\nu}} \exp{\left(-\beta O(x,\hat{\nu})\right)}dx}{ \int_{A_{\tau,\hat\nu}} \exp{\left(-\beta O(x,\hat{\nu})\right)}dx}.\] 
Moreover, if $t \geq 2c_2\tau +s+c_3$, \eqref{eqn::in_ball} indicates that $B_{t}^c(\theta)\cap A_{\tau,\hat\nu} = \emptyset$ and so \[\int_{B_t^c(\theta)\cap A_{\tau,\hat\nu}} \exp{\left(-\beta O(x,\hat{\nu})\right)}dx=0.\] 
As such, we restrict $t< 2c_2\tau +s+c_3$. Under this restriction, applying \eqref{eq::O_lowerbound_ball} we find that \begin{align*}
    \int_{B_t^c(\theta)\cap A_{\tau,\hat\nu}} \exp{\left(-\beta O(x,\hat{\nu})\right)}dx &\leq \int_{t \leq \norm{x-\theta} \leq 2c_2\tau + s + c_3} \exp{\left(-\beta O(x,\hat{\nu})\right)}dx \\
    &\leq \int_{t \leq \norm{x-\theta} \leq 2c_2\tau + s + c_3} \exp{\left(\frac{-\beta(t - c_3 - s)}{2c_2}\right)}dx \\
    &= \exp{\left(\frac{-\beta(t - c_3 - s)}{2c_2}\right)}\vol(\{t\leq \norm{x}\leq 2c_2\tau +s+c_3\}).
\end{align*}

Similarly, an application of \eqref{eq::O_upperbound_ball} gives
\begin{align*}
    \int_{A_{\tau,\hat\nu}}\exp{\left(-\beta O(x,\hat{\nu})\right)}dx &\geq \sup_{0 < r \leq c_1\tau/2 - s - c_3} \int_{B_r(\theta)} \exp{\left(-\beta O(x,\hat{\nu})\right)}dx \\
    &\geq \sup_{0 < r \leq c_1\tau/2 - s - c_3} \int_{B_r(\theta)} \exp{\left(\frac{-\beta(r + c_3 + s)}{c_1/2}\right)}dx \\
    &\geq \sup_{0 < r \leq c_1\tau/2 - s - c_3} \exp{\left(\frac{-\beta(r + c_3 + s)}{c_1/2}\right)}\vol(B_r(\theta)).
\end{align*}

Combining these two bounds gives \begin{align*}
    &\Prr{\norm{\tilde{\theta}-\theta}\geq t|E_7\cap E_{6,s}} \\
    &\leq \frac{\exp{\left(\frac{-\beta(t - c_3 - s)}{2c_2}\right)}\vol(\{t\leq \norm{x}\leq 2c_2\tau +s+c_3\})}{\sup_{0 < r \leq c_1\tau/2 - s - c_3} \exp{\left(\frac{-\beta(r + c_3 + s)}{c_1/2}\right)}\vol(B_r(\theta))} \\
    &= \inf_{0 < r \leq c_1\tau/2 - s - c_3} \frac{\exp{\left(\frac{-\beta(t - c_3 - s)}{2c_2}\right)}\vol(B_t^c(0)\cap B_{2c_2\tau + s + c_3}(0))}{\exp{\left(\frac{-\beta(r + c_3 + s)}{c_1/2}\right)}\vol(B_r(\theta))} \\
    &\lesssim \inf_{0 < r \leq c_1\tau/2 - s - c_3} \exp{\left(-\beta\frac{c_1/2(t - c_3 - s)-2c_2(r + c_3 + s)}{c_2c_1}\right)}\frac{(2c_2\tau + s + c_3)^d - t^d}{r^d} \\
    &= \inf_{0 < r \leq c_1\tau/2 - s - c_3} \exp{\left(\beta\left(\frac{(c_1/2 + 2c_2)(c_3 + s)}{c_1c_2}-\frac{t}{2c_2} + \frac{2 r}{c_1}\right)\right)}\frac{(2c_2\tau + s + c_3)^d - t^d}{r^d}.
\end{align*}
This bound holds for all $0 < s < c_1\tau/2 - c_3$, and so we can take the least upper bound on the range. Combining this and re-writing the expression gives \begin{multline*}\Prr{\norm{\tilde{\theta}-\theta}\geq t|E_7\cap E_{6,s}} \lesssim \inf_{\substack{0 < r \leq c_1\tau/2 - s - c_3 \\ 0 < s < c_1\tau/2 - c_3}} \exp\left(\beta\left(\frac{(c_1/2 + 2c_2)(c_3 + s)}{c_1c_2}-\frac{t}{2c_2} + \frac{2 r}{c_1}\right) \right.\\ 
\left.+ \log\left(\frac{(2c_2\tau + s + c_3)^d - t^d}{r^d}\right)\right).\end{multline*}

Since $\Prr{\norm{\tilde{\theta}-\theta}\geq t} \leq \Prr{\norm{\tilde{\theta}-\theta}\geq t|E_7\cap E_{6,s}} + P((E_7\cap E_{6,s})^c)$, combining this result with \eqref{eqn::accu_step_1} gives the desired result.
\end{proof}
\noindent We now prove Theorem~\ref{thm::pd_acc2}. 
\begin{proof}[Proof of Theorem~\ref{thm::pd_acc2}]
To prove Theorem~\ref{thm::pd_acc2}, it suffices to find a lower bound on $t$ such that  
\begin{equation*}
     \Prr{\norm{\tilde{\theta}-\theta}\geq t}\leq \Prr{\norm{\tilde{\theta}-\theta}\geq t\big|\tilde{\theta}\neq \perp}+\Prr{\tilde{\theta}= \perp} \leq \gamma.
\end{equation*}
Furthermore, by assumption, $\Prr{\tilde{\theta}= \perp}=1-\E{}{Z}\leq \gamma/3$. 
Thus, it suffices to find $t$ such that 
$$\Prr{\norm{\tilde{\theta}-\theta}\geq t\big|\tilde{\theta}\neq \perp}\leq 2\gamma/3.$$
To this end, applying Lemma~\ref{lem::pd_acc2}, yields
\begin{multline}\label{eqn::ci}    
\Pr\left(\norm{\tilde{\theta}-\theta}\geq t\big|\tilde{\theta}\neq \perp\right)\\ \lesssim \inf_{\substack{0<s<c_1\tau/2-c_3\\0<r<c_1\tau/2-s-c_3}} \Bigg[\exp\left(\frac{\epsilon(c_1/2+2c_2)(c_3+s)}{4\eta c_1c_2} -\frac{\epsilon t}{8\eta c_2} +\frac{\epsilon r}{2c_1\eta}+\log\frac{(2c_2\tau +s+c_3)^d-t^d}{r^d} \right)\\+\zeta_{n,\mu}\left(s \wedge c_3\right)+\zeta_{n,\sigma}\left( c_1/2\right)\Bigg].
\end{multline}
We first show that 
{\small
\begin{equation}\label{eqn::gamma_bound}    
\inf_{\substack{0<s<\frac{c_1\tau}{2}-c_3\\0<r<\frac{c_1\tau}{2}-s-c_3}} \Bigg[\exp\left(\frac{\epsilon(\frac{c_1}{2}+2c_2)(c_3+s)}{4\eta c_1c_2} -\frac{\epsilon t}{8\eta c_2} +\frac{\epsilon r}{2c_1\eta}+\log\frac{(2c_2\tau +s+c_3)^d-t^d}{r^d} \right)\Bigg]\leq \frac{\gamma}{3}.
\end{equation}}
To this end, using the fact that $c_1<c_2$, for any $s,r>0$, we have that
\begin{align*}
\frac{(c_1/2+2c_2)(c_3+s)}{4 c_1c_2} -\frac{ t}{8 c_2} +\frac{ r}{2c_1}  &\leq \frac{5(c_3 + s)}{8c_1} -\frac{t}{8c_2} +\frac{r}{2c_1}= \frac{5(c_3+s)+4r-tc_1/c_2}{8c_1}.
\end{align*}
Next, for some $k_r,k_s>0$, take $r = tc_1/(k_rc_2)$ and $s = r/k_s $. Then
\begin{align*}
\frac{5(c_3+s)+4r-tc_1/c_2}{8c_1} &= \frac{(5+4k_s)tc_1/(k_rc_2k_s)-tc_1/c_2+5c_3}{8c_1}= \frac{(5+4k_s-k_rk_s)t}{8c_2k_rk_s}+\frac{5c_3}{8c_1}.
\end{align*}

In order to apply this result to \eqref{eqn::gamma_bound}, it must be the case that $0 < s < c_1\tau/2 - c_3$ and $0 < r < c_1\tau/2 - s - c_3$. 
Note that $ r < c_1\tau/2 - s - c_3\implies  r+s < c_1\tau/2  - c_3\implies s < c_1\tau/2 - c_3$, so it suffices to consider $s>0$,  and $0 < r < c_1\tau/2 - s - c_3$. 
First, observe that $s,r>0$ always holds. 
Next, simplifying the second bound yields
\begin{align*}
     r < c_1\tau/2 - s - c_3&\implies  r+s < c_1\tau/2  - c_3\\
     &\implies  \frac{tc_1}{k_rc_2}(1+k_s^{-1}) < c_1\tau/2-c_3 \\ &\implies  t< \frac{k_rc_2\tau-2k_rc_2c_3/c_1}{2(1+k_s^{-1})} .
\end{align*}
Assuming that this bound on $t$ holds, then 

\begin{multline}\label{eqn::prelim_bound}
    \exp\left(\epsilon\frac{(c_1/2+2c_2)(c_3+s)}{4\eta c_1c_2} -\frac{\epsilon t}{8\eta c_2} +\frac{\epsilon r}{2c_1\eta}+\log\frac{(2c_2\tau +s+c_3)^d-t^d}{r^d} \right)\\
    \leq  \exp\left(\left[\frac{5+4k_s-k_rk_s}{8k_rk_s}\right]\frac{t\epsilon}{c_2\eta}+\log\frac{(2c_2\tau +s+c_3)^d-t^d}{r^d}+\frac{5c_3}{8c_1} \right). 
\end{multline}
We next simplify the logarithmic term. 
First suppose that $r=tc_1/(k_rc_2)$, then,
\begin{align}
  \log\frac{(2c_2\tau +s+c_3)^d-t^d}{r^d} &=   \log\frac{(2c_2\tau +tc_1/k_rc_2+c_3)^d-t^d}{(tc_1/k_rc_2)^d}\nonumber\\
  &=d\log \frac{k_rc_2}{c_1} +\log\frac{(2c_2\tau +tc_1/k_rc_2+c_3)^d-t^d}{t^d}\nonumber\\
  &\leq d\log \frac{k_rc_2}{c_1} +d\log\frac{2c_2\tau +tc_1/k_rc_2+c_3}{t}\nonumber\\
  &= d\log \frac{k_rc_2}{c_1} +d\log\left(\frac{2c_2\tau+c_3} {t}+\frac{c_1}{k_rc_2}\right)\nonumber\\
  \label{eqn::first_bound}
  &= d\log\left(\frac{2k_rc_2^2\tau+c_3k_rc_2} {tc_1}+1\right).
\end{align}
Now, from Lemma~\ref{lem::pd_acc2}, if $t\geq 2c_2\tau+s+c_3$, then $\Pr\left(\norm{\tilde{\theta}-\theta}\geq t\big|\tilde{\theta}\neq \perp\right)=0$ and so we consider $t < 2c_2\tau + s + c_3$. 
Based on the assumed values for $r$ and $s$ this gives 
\[ t < 2c_2\tau + \frac{tc_1}{k_sk_rc_2}+c_3 \implies t < \frac{2k_sk_r\tau c_2^2+c_3k_sk_r c_2}{k_rk_sc_2 - c_1},\] 
whenever $k_rk_sc_2 - c_1 > 0$. This holds trivially for any $k_rk_s \geq 1$. In order to ensure that the bounds discussed holds, we require that this upper bound on $t$ is less than the previous upper bound on $t$ to account for all values of $t$. Thus, using the fact that $k_rk_s\geq 1$, we have that \begin{align*}
 \frac{2k_sk_r\tau c_2^2+c_3k_sk_r c_2}{k_rk_sc_2 - c_1} \leq  \frac{k_rc_2\tau-2k_rc_2c_3/c_1}{2(1+k_s^{-1})},
\end{align*} 
whenever,
\begin{align*}
 &\frac{2k_sk_r\tau c_2^2+c_3k_sk_r c_2}{k_rk_sc_2 - c_1} \leq  \frac{k_rc_2\tau-2k_rc_2c_3/c_1}{2(1+k_s^{-1})}\\
 &\implies \frac{2k_s\tau c_2+c_3k_s}{k_rk_sc_2 - c_1} \leq  \frac{\tau-2c_3/c_1}{2(1+k_s^{-1})}\\
 &\implies \frac{2(k_s+1)\tau c_2+c_3(k_s+1)}{k_rk_sc_2 - c_1}+\frac{c_3}{c_1} \leq  \tau/2\\
  &\implies \frac{c_3(k_s+1)}{k_rk_sc_2 - c_1}+\frac{c_3}{c_1} \leq  \tau \frac{k_rk_sc_2 - c_1-4(k_s+1) c_2}{2(k_rk_sc_2 - c_1)}\\
  &\implies \tau\geq \frac{2(k_rk_sc_2 - c_1)}{k_rk_sc_2 - c_1-4(k_s+1) c_2}\left(\frac{c_3(k_s+1)}{k_rk_sc_2 - c_1}+\frac{c_3}{c_1} \right).
\end{align*} 
Now, say we take $k_s=1$ and $k_r=10$, then 
\begin{align*}
\tau\geq \frac{20 -2(c_1/c_2)}{2 - (c_1/c_2)}\left(\frac{2c_3}{10c_2 - c_1}+\frac{c_3}{c_1} \right),
\end{align*}
for which we can simplify this to the requirement that
\begin{align*}
\tau\geq 20\left(\frac{2c_3}{10c_2 - c_1}+\frac{c_3}{c_1} \right),
\end{align*}

Thus, for this choice of $r$ and $s$, \eqref{eqn::prelim_bound} becomes \begin{multline*}
\exp\left(\frac{\epsilon(c_1/2+2c_2)(c_3+s)}{4\eta c_1c_2} -\frac{\epsilon t}{8\eta c_2} +\frac{\epsilon r}{2c_1\eta}+\log\frac{(2c_2\tau +s+c_3)^d-t^d}{r^d}+\frac{5c_3}{8c_1}\right)\\
    \leq  \exp\left(\left[\frac{5+4k_s-k_rk_s}{8k_rk_s}\right]\frac{t\epsilon}{c_2\eta}+d\log\left(\frac{2k_rc_2^2\tau} {tc_1}+1\right)+\frac{5c_3}{8c_1}\right). 
\end{multline*} As a result, the above argumentation gives consideration of \begin{equation}
   t < \frac{k_rc_2\tau-2k_rc_2c_3/c_1}{2(1+k_s^{-1})}\label{eq::t_bounds}.
\end{equation}

Using the derived upper bound, for $t>1/n$, 
\begin{multline*}
    \exp\left(\left[\frac{5+4k_s-k_rk_s}{8k_rk_s}\right]\frac{t\epsilon}{c_2\eta}+d\log\left(\frac{2k_rc_2^2\tau} {tc_1}+1\right)+\frac{5c_3}{8c_1}\right) \\
    \leq \exp\left(\left[\frac{5+4k_s-k_rk_s}{8k_rk_s}\right]\frac{t\epsilon}{c_2\eta}+d\log\left(\frac{2nk_rc_2^2\tau} {c_1}+1\right)+\frac{5c_3}{8c_1}\right).
\end{multline*} Bounding this above by $\gamma/3$ yields
\begin{align*}
    &\exp\left(\left[\frac{5+4k_s-k_rk_s}{8k_rk_s}\right]\frac{t\epsilon}{c_2\eta}+d\log\left(\frac{2nk_rc_2^2\tau} {c_1}+1\right)\right) \leq \frac{\gamma}{3}\exp\left(-\frac{5c_3}{8c_1}\right) \\
    &\implies \left[\frac{k_rk_s-5-4k_s}{8k_rk_s}\right]\frac{t\epsilon}{c_2\eta} \geq -\log\left(\frac{\gamma}{3}\exp\left(-\frac{5c_3}{8c_1}\right)\left(\frac{2nk_rc_2^2\tau} {c_1}+1\right)^{-d}\right) \\
    &\implies t > \frac{\eta c_2}{\epsilon}\left[\frac{8k_rk_s}{k_rk_s-5-4k_s}\right]\log\left(\frac{3}{\gamma}\exp\left(\frac{5c_3}{8c_1}\right)\left(\frac{2nk_rc_2^2\tau} {c_1}+1\right)^{d}\right)\\
    &\implies t > 80\frac{\eta c_2}{\epsilon}\log\left(\frac{3}{\gamma}\exp\left(\frac{5c_3}{8c_1}\right)\left(\frac{20nc_2^2\tau} {c_1}+1\right)^{d}\right)
\end{align*} 
Therefore, this reduces to
\begin{align*}
   t \gtrsim \frac{\eta c_2}{\epsilon}\left[\log(1/\gamma)\vee d\log\left(\frac{20c_2^2n\tau}{c_1}+1 \right)\vee c_3/c_1\right] \vee 1/n.
\end{align*} 

Next we bound $\zeta_{n,\mu}(s\wedge c_3)+\zeta_{n,\sigma}(c_1/2) \leq \gamma/3$. By assumption, $\gamma > 3(zeta_{n,\sigma}(\frac{c_1}{2})\vee \zeta_{n,\mu}(c_3))$, and so we consider only $\zeta_{n,\mu}(s)$ for $s <  c_3$. To this end, using the fact that $\zeta_{n,\mu}$ are non-increasing, we have that $$\zeta_{n,\mu}(s)\leq \gamma/3\implies t\gtrsim \frac{c_2}{c_1}\zeta_{n,\mu}^{-1}(\gamma/3).$$
This completes the proof. 
\end{proof}
\section{Proofs from Section~\ref{sec::examples}}\label{app::ex_pro}
Before proving Theorems~\ref{thm::trimmed} and~\ref{thm::order_cond_1}, we first prove two results concerning the projected quantiles and spacings. 
Let $\lambda_{t,q_1,q_2}=\inf_{\substack{u\in\bS^{d-1}\\z\in(\xi_{q_1,u}-t,\xi_{q_2,u}+t)}}f_{\nu_u}(z)$ and $\hat\xi_{q,u}=F^{-1}_{\hat\nu_u}(q)$.
\begin{lemma}\label{lem::quantile_process_concen}
If $\nu$ is absolutely continuous, then for all for $0<q_1\leq q_2<1$, $n,d\geq 1$ and $t>0$ it holds that
$$\Prr{\sup_{u\in\bS^{d-1}}\sup_{x\in [q_1,q_2]}|\xi_{x,u}-\hat\xi_{x,u}|\geq t}\leq \left(\frac{cn}{d}\right)^de^{-2nt^2\lambda_{t,q_1,q_2}^2}.$$
\end{lemma}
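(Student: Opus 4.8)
The plan is to reduce uniform control of the projected quantile process to uniform control of the projected empirical distribution functions over the class of half-spaces of $\rdd$, and then to invoke Vapnik--Chervonenkis theory. Throughout fix $t>0$ and abbreviate $\lambda=\lambda_{t,q_1,q_2}$.

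\emph{Step 1 (from quantiles to half-space probabilities).} Since $\nu$ is absolutely continuous, $F_{\nu_u}$ is continuous, so $F_{\nu_u}(\xi_{x,u})=x$ for every $x\in(0,1)$ and $\ubsd$, and $x\mapsto\xi_{x,u}$ is nondecreasing, whence $\xi_{x,u}\in[\xi_{q_1,u},\xi_{q_2,u}]$ for $x\in[q_1,q_2]$. Suppose $\hat\xi_{x,u}>\xi_{x,u}+t$ for some $u$ and $x\in[q_1,q_2]$. By definition of the left-continuous empirical quantile this forces $F_{\hat\nu_u}(\xi_{x,u}+t)<x$, so, writing $H=\{z\in\rdd:z^\top u\le\xi_{x,u}+t\}$ and using $\hat\nu_u(B)=\hat\nu(\{z:z^\top u\in B\})$,
\[
\nu(H)-\hat\nu(H)\;>\;F_{\nu_u}(\xi_{x,u}+t)-x\;=\;\int_{\xi_{x,u}}^{\xi_{x,u}+t}f_{\nu_u}(z)\,dz\;\ge\;t\lambda ,
\]
the last step because $(\xi_{x,u},\xi_{x,u}+t)\subset(\xi_{q_1,u}-t,\xi_{q_2,u}+t)$. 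Symmetrically, $\hat\xi_{x,u}<\xi_{x,u}-t$ produces a half-space $H'$ with $\hat\nu(H')-\nu(H')>t\lambda$. Hence, writing $\mathcal H_d$ for the class of closed half-spaces of $\rdd$ and dispatching the boundary cases $\hat\xi_{x,u}=\xi_{x,u}\pm t$ by a routine monotone limit in $t$ (note $\lambda_{t',q_1,q_2}\ge\lambda$ for $t'<t$),
\[
\Big\{\sup_{\ubsd}\sup_{x\in[q_1,q_2]}|\xi_{x,u}-\hat\xi_{x,u}|\ge t\Big\}\ \subseteq\ \Big\{\sup_{H\in\mathcal H_d}|\hat\nu(H)-\nu(H)|\ge t\lambda\Big\}.
\]

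\emph{Step 2 (uniform concentration over half-spaces).} The class $\mathcal H_d$ has VC dimension $d+1$, so by the Sauer--Shelah lemma its shattering coefficient is at most $(cn/d)^{d}$ for a universal $c$. Since replacing one sample point changes $\hat\nu(H)$ by at most $1/n$ uniformly in $H$, the functional $(X_1,\dots,X_n)\mapsto\sup_{H\in\mathcal H_d}|\hat\nu(H)-\nu(H)|$ has bounded differences $1/n$; combining the standard symmetrization plus Sauer--Shelah bound on its mean with a bounded-differences (equivalently, direction-wise Dvoretzky--Kiefer--Wolfowitz) tail estimate yields, for a universal $c$,
\[
\Pr\Big(\sup_{H\in\mathcal H_d}|\hat\nu(H)-\nu(H)|\ge s\Big)\ \le\ \Big(\tfrac{cn}{d}\Big)^{d}e^{-2ns^{2}},\qquad s>0 .
\]
Taking $s=t\lambda_{t,q_1,q_2}$ and combining with the inclusion of Step~1 gives the claim.

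\emph{Main obstacle.} The crux is the simultaneous uniformity over all directions $\ubsd$. Because $\nu$ is allowed to be heavy-tailed and the $X_i$ unbounded, one \emph{cannot} pass from a finite net of directions to the whole sphere by any Lipschitz or continuity estimate: the wedge between two nearly parallel hyperplanes has arbitrarily small angle but infinite extent and may carry non-negligible $\nu$- and $\hat\nu$-mass, so the naive net error is uncontrollable; this is exactly why Step~2 must run through the VC dimension of $\mathcal H_d$ rather than a covering argument, and the prefactor $(cn/d)^{d}$ is the price of this uniformity, entering through the shattering coefficient. The exponential rate $e^{-2ns^2}$ matches the one-dimensional Dvoretzky--Kiefer--Wolfowitz constant obtained direction-wise; pinning the constant down precisely (as opposed to some universal $c>0$) is the only delicate point in Step~2, and a secondary, purely bookkeeping, issue is the tie/boundary case in Step~1, handled by monotonicity of $F^{-1}_{\hat\nu_u}$, right-continuity of $F_{\hat\nu_u}$, and continuity of $F_{\nu_u}$.
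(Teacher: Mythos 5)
Your proposal is correct and follows essentially the same route as the paper: Step~1 is the same reduction from quantile deviations to empirical-CDF deviations via monotonicity and a mean-value-theorem lower bound using $\lambda_{t,q_1,q_2}$, and Step~2 is the same uniform half-space concentration bound, which the paper obtains by directly citing Talagrand's inequality \citep{Talagrand1994} rather than re-deriving it from Sauer--Shelah and bounded differences as you sketch.
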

\begin{proof}
Note that
\begin{equation}\label{eqn::start}
\Prrr{\sup_{\substack{x\in [q_1,q_2]\\ u\in\bS^{d-1}}}|\xi_{x,u}-\hat\xi_{x,u}|\geq t}=\Prrr{\bigcup_{\substack{x\in [q_1,q_2]\\ u\in\bS^{d-1}}}\left(\{\hat\xi_{x,u}-\xi_{x,u}\geq t\}\cup \{\xi_{x,u}-\hat\xi_{x,u}\geq t\}\right) }.
\end{equation}
Furthermore, a direct consequence of the mean value theorem is that
\begin{align*}
    \{\hat\xi_{x,u}-\xi_{x,u}\geq t\}&= \{x\geq F_{\hat\nu_u}(\xi_{x,u}+t)\}\\
    &= \{ F_{\nu_u}(\xi_{x,u}+t)-F_{\hat\nu_u}(\xi_{x,u}+t)\geq F_{\nu_u}(\xi_{x,u}+t)-x\}\\
    &\subset\{ \sup_{\substack{y\in[0,1]\\ u\in\bS^{d-1}}}|F_{\nu_u}(y)-F_{\hat\nu_u}(y)|\geq t\inf_{\substack{z\in[q_1,q_2]\\ u\in\bS^{d-1}}}f_{\nu_u}(\xi_{z,u}+t)\}. 
\end{align*}
A similar argument yields 
\begin{align*}
    \{\xi_{x,u}-\hat\xi_{x,u}\geq t\}&\subset\{ \sup_{\substack{y\in[0,1]\\ u\in\bS^{d-1}}}|F_{\nu_u}(y)-F_{\hat\nu_u}(y)|\geq t\inf_{\substack{z\in[q_1,q_2]\\ u\in\bS^{d-1}}}f_{\nu_u}(\xi_{z,u}-t)\}. 
\end{align*}
Combining these two set inequalities with \eqref{eqn::start} and applying Talagrand's inequality \citep{Talagrand1994} results in 
\begin{align*}
    \Prrr{\sup_{\substack{x\in [q_1,q_2]\\ u\in\bS^{d-1}}}|\xi_{x,u}-\hat\xi_{x,u}|\geq t}&\leq \Prrr{\sup_{\substack{y\in[0,1]\\ u\in\bS^{d-1}}}|F_{\nu_u}(y)-F_{\hat\nu_u}(y)|\geq t\inf_{\substack{u\in\bS^{d-1}\\z\in(\xi_{q_1,u}-t,\xi_{q_2,u}+t)}}f_{\nu_u}(z)}\\
    &=\Prrr{\sup_{\substack{y\in[0,1]\\ u\in\bS^{d-1}}}|F_{\nu_u}(y)-F_{\hat\nu_u}(y)|\geq t\lambda_{t,q_1,q_2}}\\
    &\leq \left(\frac{cn}{d}\right)^de^{-2nt^2\lambda_{t,q_1,q_2}^2}. \qedhere
\end{align*}
\end{proof}
\noindent For $\sam{X}{n}\in \bD_{n\times d}$ and $\ubsd$, let $(X^\top u)_{(k)}$ denote the $k$th order statistic of the sample $X_1^\top u, \ldots, X_n^\top u$. 
\begin{lemma}\label{thm::order_spacing_new_uniform}
If, for a given $0<q\leq 1/2$, Condition~\ref{cond::dens_lb} holds for $(q,\nu,k)$, then there exists a universal constant $c>0$ such that for all $d,n\geq 1$ and all $C>0$, it holds that
\begin{multline*}
      \Prrr{\sup_{\substack{u\in\bS^{d-1}\\ p\in [q,1-q-k/n]}}(X^\top u)_{(\floor{pn}+k)}-(X^\top u)_{(\floor{pn})}\geq C/n}\lesssim  \exp\left(-nr^2M^2+d\log(cn/d)\right)\\
      +\exp\left(-\frac{3MC}{4k}+d\log 9
      +\log n(1-2q+1/n)\right).
\end{multline*}
\end{lemma}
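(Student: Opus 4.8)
The plan is to separate a ``localization failure'' event---handled by the already-established quantile concentration bound, Lemma~\ref{lem::quantile_process_concen}---from a ``large gap'' event, and to control the latter by a pigeonhole/Beta--Binomial estimate together with a union bound over directions and over bulk indices. For the first piece I would apply Lemma~\ref{lem::quantile_process_concen} with $q_1=q-3/n$, $q_2=1-q+3/n$ and $t=r$; Condition~\ref{cond::dens_lb} gives $\lambda_{r,q_1,q_2}\ge M$, so the event $G=\{\sup_{u\in\bS^{d-1}}\sup_{x\in[q_1,q_2]}|\hat\xi_{x,u}-\xi_{x,u}|<r\}$ satisfies $\Prr{G^c}\lesssim\exp(-nr^2M^2+d\log(cn/d))$, which is the first term of the claim. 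On $G$, for every $u\in\bS^{d-1}$ and every $p\in[q,1-q-k/n]$ both order statistics $(X^\top u)_{(\floor{pn})}=\hat\xi_{\floor{pn}/n,u}$ and $(X^\top u)_{(\floor{pn}+k)}=\hat\xi_{(\floor{pn}+k)/n,u}$ lie in $\mathcal I_u:=[\xi_{q-3/n,u}-r,\xi_{1-q+3/n,u}+r]$, on which $f_{\nu_u}\ge M$. Hence the population spacing obeys $\xi_{(\floor{pn}+k)/n,u}-\xi_{\floor{pn}/n,u}\le k/(nM)$, so if $C>k/M+2nr$ the sample spacing is always $<C/n$ on $G$ and the probability in question is at most $\Prr{G^c}$; we may thus assume $C\le k/M+2nr$.

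Next I would fix a direction $u$ and a bulk index $j=\floor{pn}$ and argue on $G$. If $(X^\top u)_{(j+k)}-(X^\top u)_{(j)}\ge C/n$, then the open interval $\big((X^\top u)_{(j)},\,(X^\top u)_{(j)}+C/n\big)$ is contained in $\mathcal I_u$ and meets at most $k-1$ of the projected samples (it cannot contain $(X^\top u)_{(j+k)}$ or any larger order statistic); splitting it into $k$ equal sub-intervals of length $C/(nk)$, at least one such sub-interval $J$ contains no sample point, and $\nu_u(J)=\int_J f_{\nu_u}\ge MC/(nk)$. To turn this into a probability bound for fixed $u$ and $j$, pass to $Y_i=F_{\nu_u}(X_i^\top u)$, i.i.d.\ uniform on $[0,1]$: since $F^{-1}_{\nu_u}$ is $1/M$-Lipschitz on $\mathcal I_u$, a spacing $\ge C/n$ forces $Y_{(j+k)}-Y_{(j)}\ge MC/n$, and the Beta--Binomial identity gives $\Pr(Y_{(j+k)}-Y_{(j)}\ge MC/n)=\Pr(\mathrm{Bin}(n,MC/n)\le k-1)$, which a Chernoff bound for the binomial lower tail bounds by $\exp(-3MC/(4k))$ in the regime where the asserted bound is nontrivial. (Alternatively one can skip the transform and use $\Pr(J\text{ empty})\le(1-MC/(nk))^n\le e^{-MC/k}$ directly, at the cost of an extra union over the at most $n$ candidate values of $(X^\top u)_{(j)}$.)

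It then remains to make these estimates uniform. The number of distinct bulk indices $j=\floor{pn}$ with $p\in[q,1-q-k/n]$ is at most $n(1-2q+1/n)$, which accounts for the $\log(n(1-2q+1/n))$ term. For the supremum over $\bS^{d-1}$ I would take a $\tfrac14$-net $\mathcal N$ of the sphere with $|\mathcal N|\le 9^d$ (the source of the $d\log 9$ term): the delicate point, on which the whole argument turns, is that on $G$ a sample point whose projection onto $u$ lands in the bounded window $\mathcal I_u$ while its projection onto a nearby net point $u'$ does not is automatically irrelevant to the gap in direction $u'$ as well, so only the boundedly many points relevant to both directions---which are then only boundedly perturbed---need to be tracked; after slightly enlarging the window to absorb the $\tfrac14$-discretization (which is why one obtains $3/4$ rather than $1$ in the exponent), a large gap for $u$ implies a corresponding empty-window event for some $u'\in\mathcal N$. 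A union bound of the estimate above over $\mathcal N$ and over the bulk indices then yields the second term $\exp\big(-\tfrac{3MC}{4k}+d\log 9+\log(n(1-2q+1/n))\big)$.

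The main obstacle is exactly this last step: a naive $\epsilon$-net argument on $\bS^{d-1}$ breaks down because $|X_i^\top(u-u')|$ is not uniformly bounded when $\nu$ is heavy-tailed, and the remedy is to run the net argument only after conditioning on the localization event $G$, which confines all the order statistics that matter to the bounded interval $\mathcal I_u$ and renders the stray large-norm observations harmless. Everything else---the pigeonhole split, the probability-integral-transform reduction, and the binomial lower-tail estimate---is routine.
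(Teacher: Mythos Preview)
Your overall plan mirrors the paper's: isolate a localization event via Lemma~\ref{lem::quantile_process_concen} (the paper's $E_8$, your $G$), reduce the supremum over $\bS^{d-1}$ to a $1/4$-net of size $\le 9^d$, union-bound over the at most $n(1-2q+1/n)$ bulk ranks, and finish with a uniform order-statistic tail bound after the probability integral transform. One minor difference: the paper first bounds the $k$-spacing by $k$ times the maximal \emph{consecutive} spacing among bulk ranks and then uses $U_{(j+1)}-U_{(j)}\stackrel{d}{=}U_{(1)}\sim\mathrm{Beta}(1,n)$, giving $(1-3CM/(4kn))^n\le e^{-3MC/(4k)}$ directly; your Beta--Binomial/Chernoff route on the $k$-spacing is an equivalent alternative that lands on the same exponent.

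The real gap is your net step. Even granting that a point ``relevant to both directions'' has both projections in the bounded windows $\mathcal I_u,\mathcal I_{u'}$, the resulting control on $|X^\top u-X^\top u'|$ is only the \emph{diameter of the window}---an $O(1)$ quantity---not anything of order $C/n$. Hence an empty interval of width $C/n$ in direction $u$ cannot be transferred to one in a nearby net direction $u'$ by ``slightly enlarging the window,'' and the factor $3/4$ you attribute to this enlargement has no such origin. The paper's route is entirely different and does \emph{not} use the localization event for the net reduction: having passed to consecutive spacings, it invokes the deterministic norm--net inequality
\[
\sup_{u\in\bS^{d-1}}|X_i^\top u-X_j^\top u|\ \le\ \frac{1}{1-\epsilon}\,\sup_{u\in\mathcal N_\epsilon}|X_i^\top u-X_j^\top u|,
\]
valid for every fixed pair $(i,j)$ regardless of $\|X_i\|$ or $\|X_j\|$; with $\epsilon=1/4$ this is precisely where the $3/4$ comes from. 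The localization event $E_8$ is used only afterwards, to ensure the relevant order statistics lie where $f_{\nu_u}\ge M$, so that the mean-value theorem converts a spacing $\ge 3C/(4kn)$ into a uniform-spacing event of size $\ge 3MC/(4kn)$.
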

\begin{proof}
Let $m=\floor{qn}$.
Now, define
$$E_8=\bigcap_{\ubsd}\left\{\xi_{q,u}-r\leq (X^\top u)_{(m)}\leq (X^\top u)_{(m+k)}\leq \xi_{q,u}+r\right\}.$$
Now, noting that, for a given $j\in[n]$, $(X^\top u)_{(j)}=\hat\xi_{j/n,u}$, applying Lemma~\ref{lem::quantile_process_concen} with $q_1=q$, $q_2=q+k/n$ and $t=r$ in conjunction with
Condition~\ref{cond::dens_lb} yields
\begin{equation}\label{eqn::spac_step_1}
\Prr{E_8^c}\lesssim \exp\left(-nr^2\lambda_{r,q,q+k/n}^2+d\log(cn/d)\right)\leq \exp\left(-nr^2M^2+d\log(cn/d)\right).
\end{equation}
Now, let
$$E_9=\left\{\max_{m\leq j<n-m}\sup_{\ubsd}\left[(X^\top u)_{(j+1)}-(X^\top u)_{(j)}\right]\geq \frac{C}{kn}\right\},$$
and note that
\begin{equation}\label{eqn::spac_step_2}
        \Prrr{\ \sup_{\substack{u\in\bS^{d-1}\\ p\in [q,1-q-k/n]}}(X^\top u)_{(\floor{pn}+k)}-(X^\top u)_{(\floor{pn})}\geq \frac{C}{n}}\leq \Prr{E_9|E_8}+\Prr{E_8^c}.
\end{equation}
We now consider the event $E_9$. 
Recall that the set $\cN_\epsilon$ is an $\epsilon$-net on $\bS^{d-1}$ if for all $\ubsd$, there exists $v\in \cN_\epsilon$ such that $\norm{u-v}\leq \epsilon$. 
Letting $\cN_\epsilon$ be an arbitrary $\epsilon$-net on $\bS^{d-1}$, it is straightforward (see, e.g., \citep[][Lemma 4.4.1]{vershynin2018high}) to show that for any $i,j\in[n]$,
$$\sup_{\ubsd}|X_i^\top u-X_j^\top u|\leq \frac{1}{1-\epsilon}\sup_{u\in \cN_\epsilon}|X_{i}^\top u-X_{j}^\top u|.$$
Furthermore, there exists $\cN_{1/4}$ such that $|\cN_{1/4}|\leq 9^d$ \citep[][see Corollary 4.2.13]{vershynin2018high}. 
Applying these two facts, in conjunction with two applications of subadditivity of probability measures, yields
\begin{align}
    \Prr{E_8} 
    &\leq \Prr{\max_{m\leq j<n-m}\sup_{u\in \cN_{1/4}}(X^\top u)_{(j+1)}-(X^\top u)_{(j)}\geq \frac{3C}{4n}}\nonumber \\
    &\leq 9^d\sup_{u\in \cN_{1/4}}\Prr{\max_{m\leq j<n-m}(X^\top u)_{(j+1)}-(X^\top u)_{(j)}\geq \frac{3C}{4n}}\nonumber \\
    &\leq 9^d(n-2m+1)\sup_{\substack{u\in \cN_{1/4} \\ m\leq j<n-m}}\Prr{(X^\top u)_{(j+1)}-(X^\top u)_{(j)}\geq \frac{3C}{4n}}\nonumber \\
    \label{eqn::spac_step_3}
    &\coloneqq e^{d\log 9+\log(n-2m+1)}\sup_{\substack{u\in \cN_{1/4}\\ m\leq j<n-m}}\Prr{A_{3,u,j}}.
\end{align}
Now, observe that for any $m\leq j<n-m$ and $\ubsd$, using Condition~\ref{cond::dens_lb}, 
together with the mean value theorem, imply that on $E_8$ and $A_{3,u,j}$ it holds that
\begin{align}\label{eqn::intermediate_spacing_step}
    &\frac{F_{\nu_u}((X^\top u)_{(j+1)})-F_{\nu_u}((X^\top u)_{(j)})}{(X^\top u)_{(j+1)}-(X^\top u)_{(j)}}\geq M\implies F_{\nu_u}((X^\top u)_{(j+1)})-F_{\nu_u}((X^\top u)_{(j)}) \geq   \frac{3C}{4n}M.
\end{align}
Let $U_{(j),u}=F_{\nu_u}((X^\top u)_{(j)})$. 
Now, recall that for any set of $n$ independent standard uniform random variables $V_1,\ldots,V_n$, for any $j\in[n-1]$, it holds that $V_{(j+1)}-V_{(j)}\eqd V_{(1)}$ and $V_{(1)}\sim Beta(1,n)$ \citep[see, e.g.,][]{Pyke1965}. 
These two facts, combined inequality \eqref{eqn::intermediate_spacing_step}, results in 
\begin{align}
        \sup_{\substack{u\in \cN_{1/4}\\ m\leq j<n-m}}\Prr{A_{3,u}|E_8}&\leq\sup_{\substack{u\in \cN_{1/4}\\ m\leq j<n-m}}\Prr{U_{(j+1),u}-U_{(j),u}\geq 3CM/4n}\nonumber\\
         &\leq\sup_{u\in \cN_{1/4}}\Prr{U_{(1),u}\geq 3CM/4n}\nonumber\\
          &\leq (1-(3CM/4n)\wedge 1)^{n}\nonumber\\
          \label{eqn::spac_step_4}
          &\leq e^{-3MC/4k}.
\end{align}
Combining \eqref{eqn::spac_step_1}--\eqref{eqn::spac_step_4} yields 
\begin{multline*}
     \Prrr{\sup_{\substack{u\in\bS^{d-1}\\ p\in [q,1-q-k]}}(X^\top u)_{(\floor{pn}+k)}-(X^\top u)_{(\floor{pn})}\geq \frac{C}{n}}\lesssim \exp\left(-nr^2M^2+d\log(cn/d)\right)\\
     \exp\left(-3MC/4k+d\log 9+\log(n-2m+1)\right).\qedhere
\end{multline*}
\end{proof}
\begin{proof}[Proof of Theorem~\ref{thm::trimmed}]
Note that:
\begin{align*}
  \sup_{\tilde\nu\in \tilde\cM_1(\hat\nu)}S_{n,k}(\mu)\leq \frac{\max_{0\leq i\leq k+1}X_{(n-\alpha n +1+k-i)}-X_{(\alpha n +1-i)}}{n-2\alpha n}.
\end{align*}
Therefore, applying Lemma~\ref{thm::order_spacing_new_uniform} we have that 
\begin{align*}
   \Prr{S_{n,3}(\mu)\geq \eta}&\lesssim  \max_{0\leq i\leq 4}\Prr{\sup_{u}\frac{(X^\top u)_{(n-\alpha n +4-i)}-(X^\top u)_{(\alpha n +1-i)}}{n-2\alpha n}\geq  \eta}\\
   &\lesssim e^{-Mn^2(1-2\alpha) \eta/4+d+\log(n(1-2\alpha))}+e^{-nM^2r^2+d\log(cn/d)}. 
\end{align*}
This proves the assertion for the trimmed mean. We now focus on the trimmed absolute deviations. 
Consider some $\tilde\nu\in \tilde\cM_1(\hat\nu,3)$ and let $Y_1,\ldots,Y_n$ be the observations that correspond to $\tilde\nu$. 
For a particular $\ubsd$, let $A_{1,u},A_{2,u}\subset[n]$ be the subset of indices which are not trimmed from $\hat\nu_u$ and $\tilde\nu_u$, respectively. 
Now, first, it follows from above that with probability $1-e^{-Mn(1-2\alpha)\log n/4+d+\log(n(1-2\alpha))}-e^{-nM^2r^2+d\log(cn/d)}$, we have that $S_{n,3}(\mu)=\sup_{u,\tilde\nu}|\hat\mu_u-\tilde\mu_u|\leq \eta$. 
Therefore, we have that for all $\ubsd$ and $i\in[n]$, it holds that
$$|Y_i^\top u-\tilde\mu_u|\leq \begin{cases}
    sign(Y_i^\top u-\hat\mu_u)(Y_i^\top u-\hat\mu_u+\hat\mu_u-\tilde\mu_u)& |Y_i^\top u-\hat\mu_u|\geq \eta\\
    |Y_i^\top u-\hat\mu_u+\hat\mu_u-\tilde\mu_u| & |Y_i^\top u-\hat\mu_u|\leq \eta 
    \end{cases}.$$
Using this representation, we have that for $i\in A_{1,u}\cap A_{2,u}$, we have that$||X_i^\top u-\hat\mu_u|-|Y_i^\top u-\tilde\mu_u||\leq 2\eta.$
Applying this inequality yields that
\begin{align*}
   S_{n,3}(\sigma)&=\sup_{\ubsd}\left|\sum_{i\in A_{1,u}} \frac{|X_i^\top u-\hat\mu_u|}{n(1-2\alpha)}-\sum_{i\in A_{2,u}} \frac{|Y_i^\top u-\tilde\mu_u|}{n(1-2\alpha)}\right|\\
   &\leq 3\sup_{\ubsd}\sup_{i\in [3]}\left|\frac{|X_i^\top u-\hat\mu_u|-|Y_i^\top u-\hat\mu_u|}{n(1-2\alpha)}\right|+2\eta\\
   &\lesssim \sup_{\ubsd}\sup_{i\in [3]}\left|\frac{X_i^\top u-Y_i^\top u}{n(1-2\alpha)}\right|+\eta\\
   &\lesssim \eta,
\end{align*}
where the last inequality comes from the fact that $S_{n,3}(\mu)\leq \eta$. 
It follows that there exists a universal constant $C>0$ such that 
\begin{align*}
   \Prr{S_{n,3}(\sigma)\geq \eta}\leq \Prr{S_{n,3}(\sigma)\geq \eta/C},
\end{align*}
and this completes the proof. 
\end{proof}
The proof of Theorem~\ref{thm::order_cond_1} is a consequence of two lemmas, which we present as follows. 
\begin{lemma}\label{lem::med_cond_1}
If Condition~\ref{cond::dens_lb} holds for $(1/2,\nu,k)$, then there exists a universal constant $c>0$ such that for all $d,n\geq 1$ and $t>0$ it holds that 
\begin{equation*}
        \Prr{\tilde{S}_{n,k}(\med)\geq t}\lesssim \exp\left(-nr^2M^2+d\log(cn/d)\right)+
     \exp\left(-3MC/4k+d\log 9+\log n\right).
\end{equation*}
\end{lemma}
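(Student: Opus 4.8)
The plan is to reduce the projected $k$-sensitivity of the median to a uniform control of the $k$-spacings of the projected order statistics about the median index, and then quote Lemma~\ref{thm::order_spacing_new_uniform}; the real probabilistic work is already packaged there, so this proof is essentially a reduction plus parameter-matching.

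First I would establish the reduction. Fix $u\in\bS^{d-1}$ and $\tilde\nu\in\am{\hat\nu}{k}$, with underlying samples $X_1,\dots,X_n$ and $Y_1,\dots,Y_n$ agreeing on all but $k$ indices. For any threshold $a$ the counts $\#\{i:X_i^\top u\le a\}$ and $\#\{i:Y_i^\top u\le a\}$ differ by at most $k$, so the $\ell$th order statistic of $Y^\top u$ lies in $[(X^\top u)_{(\ell-k)},(X^\top u)_{(\ell+k)}]$ for every $\ell$. Writing $j\coloneqq\floor{(n+1)/2}$ and noting that $\med=\tm_{1/2-1/2n}$ coincides, up to the boundary conventions in the definition of $\tm_\alpha$, with the order statistic at index $j$, both $\med(\hat\nu_u)$ and $\med(\tilde\nu_u)$ lie in $[(X^\top u)_{(j-k)},(X^\top u)_{(j+k)}]$, so that
$$|\med(\hat\nu_u)-\med(\tilde\nu_u)|\le \big[(X^\top u)_{(j+k)}-(X^\top u)_{(j)}\big]\vee\big[(X^\top u)_{(j)}-(X^\top u)_{(j-k)}\big].$$
The right-hand side is free of $\tilde\nu$, so taking the supremum over $\tilde\nu$ and $u$ bounds $\tilde S_{n,k}(\med)$ by the supremum over $u$ of two one-sided $k$-spacings about the median index. (Working with the two one-sided spacings, rather than the single $2k$-spacing $(X^\top u)_{(j+k)}-(X^\top u)_{(j-k)}$, is what produces the constant $4k$ rather than $8k$ in the exponent.)

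Second I would bound each one-sided spacing via Lemma~\ref{thm::order_spacing_new_uniform}. The lower spacing $(X^\top u)_{(j)}-(X^\top u)_{(j-k)}$ is the $k$-spacing $(X^\top u)_{(\floor{pn}+k)}-(X^\top u)_{(\floor{pn})}$ at $p=(j-k)/n$, and this $p$ falls in the admissible range $[q,1-q-k/n]$ for $q$ just below $1/2$; the upper spacing is handled identically after the reflection $X\mapsto -X$, which reverses the order statistics and leaves Condition~\ref{cond::dens_lb} invariant. Condition~\ref{cond::dens_lb} for $(1/2,\nu,k)$ supplies exactly the constants $r,M>0$ with $f_{\nu_u}>M$ on a neighbourhood of the relevant interquantile range that Lemma~\ref{thm::order_spacing_new_uniform} requires; invoking it with $C=tn$ and a union bound over the two sides gives, after absorbing the factor $2$ and the $\log(2k+1)\le\log n$ term into the universal constants,
$$\Prr{\tilde S_{n,k}(\med)\ge t}\lesssim \exp\big(-nr^2M^2+d\log(cn/d)\big)+\exp\big(-3MC/4k+d\log 9+\log n\big),$$
which is the claimed bound.

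All of the genuine probabilistic content — the $\epsilon$-net discretization of $\bS^{d-1}$, the Talagrand/DKW-type control of the projected empirical process in Lemma~\ref{lem::quantile_process_concen}, and the Beta-tail estimate for uniform spacings — already lives inside Lemma~\ref{thm::order_spacing_new_uniform}, so nothing new is needed there. The one real obstacle is index bookkeeping: matching the median's order-statistic representation (including the $\tm_\alpha$ boundary conventions and the behaviour for $n$ even versus odd, where the window may be $(k+1)$-wide on one side) to the parameters $(q,k)$ and the threshold of Lemma~\ref{thm::order_spacing_new_uniform}, and verifying that the $\pm 3/n$ (resp.\ $\pm k/n$) slack built into Condition~\ref{cond::dens_lb} absorbs the resulting off-by-one shifts so that the constants land as stated. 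I expect this to be routine but it needs care.
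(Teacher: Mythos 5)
Your proof is correct and takes essentially the same approach as the paper's: reduce the projected $k$-sensitivity of the median to a uniform $k$-spacing of projected order statistics about the median index, then invoke Lemma~\ref{thm::order_spacing_new_uniform}. You are in fact more careful than the paper's very terse proof — the paper's displayed inequality records only the upper one-sided spacing $(X^\top u)_{(j+k)}-(X^\top u)_{(j)}$ (where $j=\floor{(n+1)/2}$) even though moving $k$ observations can shift the median in either direction, so your two-sided decomposition with the lower spacing $(X^\top u)_{(j)}-(X^\top u)_{(j-k)}$ is the right fix; you also make explicit the identification $C=tn$ and the fact that one must take $q$ strictly below $1/2$ so that the range $[q,\,1-q-k/n]$ in Lemma~\ref{thm::order_spacing_new_uniform} is nonempty, details the paper leaves implicit.
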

\begin{proof}[Proof of Lemma~\ref{lem::med_cond_1}]
Observe that
\begin{equation}\label{eqn::median_sen_bound}
\tilde{S}_{n,k}(\mu)\leq  \sup_{u\in\bS^{d-1}}
| (X^\top u)_{(\floor{(n+1)/2}+k)}-(X^\top u)_{(\floor{(n+1)/2})}|. 
\end{equation}
Therefore, applying Lemma~\ref{thm::order_spacing_new_uniform} with $q=1/2$ and $k=k$ gives the result. 
\end{proof}
\begin{lemma}\label{lem::mad_cond_1}
Suppose that Condition~\ref{cond::dens_lb} holds for $(q',\nu,k)$ with $q'<1/4$ and that there exists $q'<q<1/4$ such that $k<n(q-q')/2$. 
Then for all $d,n\geq 1$ and $C>0$ it holds that 
\begin{multline*}
 \Prrr{\tilde{S}_{n,k}(\mad)\geq C/n}\lesssim \exp\left(-nr^2M^2+d\log(cn/d)\right)\\
 +
     \exp\left(-3MC/8k+d\log 9+\log(n(1-2q'+1/n))\right).
\end{multline*}
\end{lemma}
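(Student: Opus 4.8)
The strategy is to reduce $\tilde S_{n,k}(\mad)$ to three controllable pieces: the projected sensitivity of the median (Lemma~\ref{lem::med_cond_1}), the near-insensitivity of a median to a small perturbation of the centering point, and a ``folded'' uniform spacing estimate obtained from Lemma~\ref{thm::order_spacing_new_uniform}. Write $Y_{i,u}(z)=|X_i^\top u-z|$, so $\mad(\hat\nu_u)$ is the $\lfloor(n+1)/2\rfloor$-th order statistic of $\{Y_{i,u}(\med(\hat\nu_u))\}_{i\le n}$, and for $\tilde\nu\in\am{\hat\nu}{k}$ with underlying sample $\{Z_i\}$ (differing from $\{X_i\}$ in $k$ coordinates), $\mad(\tilde\nu_u)$ is the $\lfloor(n+1)/2\rfloor$-th order statistic of $\{|Z_i^\top u-\med(\tilde\nu_u)|\}_{i\le n}$. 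First I would observe that Condition~\ref{cond::dens_lb} for $(q',\nu,k)$ with $q'<1/4<1/2$ implies it for $(1/2,\nu,k)$ with the same $M,r$, so Lemma~\ref{lem::med_cond_1} applies and yields $\tilde S_{n,k}(\med)\le s$ off an event of the claimed exponential form, for a threshold $s$ I will later take to be a small constant multiple of $C/n$. On that event $\med(\tilde\nu_u)\in[\med(\hat\nu_u)-s,\med(\hat\nu_u)+s]$ for all $u$, and with $z^\ast=\med(\tilde\nu_u)$,
\[
|\mad(\tilde\nu_u)-\mad(\hat\nu_u)|\le \bigl|\med(\{|Z_i^\top u-z^\ast|\})-\med(\{Y_{i,u}(z^\ast)\})\bigr|+\bigl|\med(\{Y_{i,u}(z^\ast)\})-\med(\{Y_{i,u}(\med(\hat\nu_u))\})\bigr|.
\]
The second term is at most $s$ since $|Y_{i,u}(z^\ast)-Y_{i,u}(\med(\hat\nu_u))|\le s$ for every $i$. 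The first term is at most the $k$-local sensitivity of the median of the folded sample $\{Y_{i,u}(z)\}$, maximized over $z$ in the window $[\med(\hat\nu_u)-s,\med(\hat\nu_u)+s]$, because $\{|Z_i^\top u-z^\ast|\}$ and $\{Y_{i,u}(z^\ast)\}$ differ in exactly $k$ entries.

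The core of the argument is a uniform bound, over $\ubsd$ and $z$ in the window, on $(Y_{i,u}(z))_{(\lfloor(n+1)/2\rfloor+k)}-(Y_{i,u}(z))_{(\lfloor(n+1)/2\rfloor-k)}$, which dominates the first term. The key geometric fact is that the points of $\{X_i^\top u\}$ lying within distance $t$ of $z$ form a block of consecutive order statistics of $\{X_i^\top u\}$ straddling $z$; taking $t$ to be the $j$-th order statistic of the folded sample, this block has size $j$, and changing $j$ by $\pm k$ changes the block size by at most $2k$, hence moves each of its two endpoints by at most $2k$ positions. Since $z$ is within $s=O(k/n)$ of $\med(\hat\nu_u)$, which on a further high-probability event (again via the quantile-process estimate underlying Lemma~\ref{thm::order_spacing_new_uniform}, applied at $q=1/2$) is within $r$ of $\xi_{1/2,u}$, these endpoints sit near the quartiles $\xi_{1/4,u},\xi_{3/4,u}$, which lie strictly inside $[\xi_{q'-3/n,u},\xi_{1-q'+3/n,u}]$, where $f_{\nu_u}\ge M$. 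The hypothesis $k<n(q-q')/2$ with $q<1/4$ gives exactly the slack ensuring all order-statistic indices touched stay inside $[\lfloor q'n\rfloor,\lfloor(1-q')n\rfloor]$, i.e.\ in the admissible range $[q',1-q'-2k/n]$ of Lemma~\ref{thm::order_spacing_new_uniform}. Applying that lemma with quantile parameter $q'$ and with a $2k$-wide window in place of the $k$-wide one (and splitting the budget $C$ between the centering and folding contributions) then bounds the folded local sensitivity by a constant multiple of $C/n$, off an event of probability $\lesssim \exp(-nr^2M^2+d\log(cn/d))+\exp(-3MC/8k+d\log 9+\log(n(1-2q'+1/n)))$; the factor $3MC/8k$ is precisely what results from the $4k=2\cdot 2k$ in the exponent of Lemma~\ref{thm::order_spacing_new_uniform}.

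To conclude, I would fix $s$ and the partition of the deviation budget among the three contributions, absorb all constants into the universal $c$ (and, as in the proof of Theorem~\ref{thm::trimmed}, relabel $C$ if needed), and take a union bound over the relevant events — the one from Lemma~\ref{lem::med_cond_1} controlling $\tilde S_{n,k}(\med)\le s$, the one placing $\med(\hat\nu_u)$ within $r$ of $\xi_{1/2,u}$, and the one from Lemma~\ref{thm::order_spacing_new_uniform} controlling the folded $2k$-spacing — all of the same exponential shape, with the two exponents displayed in the statement dominating those coming from Lemma~\ref{lem::med_cond_1}. The main obstacle I anticipate is the folding bookkeeping in the second paragraph: making the ``contiguous block of order statistics'' description rigorous uniformly in $u$ and $z$, quantifying how a $\pm k$ change in the folded index, compounded with the $O(k/n)$ uncertainty in the center $z$, translates into a bounded shift of the block's two endpoints, and verifying that the resulting indices never escape the range $[q',1-q']$ — this is where the somewhat unusual hypothesis linking $k$, $q$, and $q'$ is consumed.
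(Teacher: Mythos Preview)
Your approach is correct and would yield the stated bound, but it differs from the paper's route. You decompose $|\mad(\tilde\nu_u)-\mad(\hat\nu_u)|$ via the triangle inequality into a ``change of center'' piece and a ``change of $k$ observations in the folded sample'' piece, and then translate the folded $2k$-spacing back to original order-statistic spacings near the quartiles. This works, but the folding bookkeeping you flag (tracking how the endpoints of $[z-t,z+t]$ sit among the original order statistics, and how far they can shift when the folded rank changes by $2k$) is genuinely delicate.

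The paper bypasses this entirely by working with the interval characterization of $\mad$: it is the half-width of the shortest interval symmetric about the median that contains $\lfloor(n+1)/2\rfloor$ observations. The paper defines the single event $E_{10}=\{\sup_{u,p\in[q',1-q'-k/n]}(X^\top u)_{(\lfloor pn\rfloor+k)}-(X^\top u)_{(\lfloor pn\rfloor)}<C/2n\}$, bounds $\Pr(E_{10}^c)$ by one call to Lemma~\ref{thm::order_spacing_new_uniform}, and then on $E_{10}$ argues both inequalities by direct interval containment: since $\tilde S_{n,k}(\med)<C/2n$ on $E_{10}$, the interval $[\hat\mu_u\pm\hat\sigma_u]$ sits inside $[\mu(\tilde\nu_u)\pm(\hat\sigma_u+C/2n)]$, which already holds half the points, giving $\sigma(\tilde\nu_u)\le\hat\sigma_u+C/2n$; the reverse inequality uses that the endpoints of the $\mad$-defining interval for $\tilde\nu_u$ are sample quantiles with indices between $q'$ and $1-q'$ (this is exactly where $k<n(q-q')/2$ with $q<1/4$ is spent), so on $E_{10}$ those endpoints move by at most $C/2n$ when $k$ points are restored and the center shifted. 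No explicit folded sample, no triangle-inequality split, and only a single invocation of the spacing lemma rather than the three high-probability events you plan to union over. Your route buys a more modular argument that reuses Lemma~\ref{lem::med_cond_1} as a black box; the paper's buys a shorter proof with less combinatorics.
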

\begin{proof}
Let $$E_{10}=\Big\{\sup_{\substack{u\in\bS^{d-1}\\ p\in [q',1-q'-k/n]}}(X^\top u)_{(\floor{pn}+k)}-(X^\top u)_{(\floor{pn})}< C/2n\Big\}.$$
A direct application of Lemma~\ref{thm::order_spacing_new_uniform} gives that for all $\ubsd$ and $\tilde\nu_u\in\cM(k,\hat\nu)$, 
\begin{multline*}
   \Prr{E_{10}^c}\leq \exp\left(-3MC/8k+d\log 9+\log(n(1-2q'+1/n))\right)\\
+\exp\left(-nr^2M^2+d\log(cn/d)\right).
\end{multline*} 

It remains to show that on $E_{10}$, for all $\ubsd$ and $\tilde\nu_u\in\cM(k,\hat\nu)$, it holds that
$\sigma(\hat\nu_u)-C/n\leq \sigma(\tilde\nu_u)\leq\sigma(\hat\nu_u)+C/n$. 
We begin by proving the upper bound. 
To this end, \eqref{eqn::median_sen_bound} implies that on $E_{10}$ it holds that $\tilde S_{n,k}(\mu)< C/2n.$ 
This fact implies that 
\begin{equation}\label{eqn::subset}
    [\hat\mu_u-\hat\sigma_u,\hat\mu_u+\hat\sigma_u]\subset [\mu(\tilde\nu_u)-\hat\sigma_u-C/2n,\mu(\tilde\nu_u)+\hat\sigma_u+C/2n].
\end{equation}
Next, recall that, by definition, the median absolute deviation is equal to half the length of the smallest interval which is symmetric about the median and which contains $\floor{(n+1)/2}$ of the observations. 
Therefore, $\floor{(n+1)/2}$ of the projected observations are contained in $[\hat\mu_u-\hat\sigma_u,\hat\mu_u+\hat\sigma_u]$. 
Consequentially, \eqref{eqn::subset} implies that on $E_{10}$, for all $\ubsd$ and $\tilde\nu_u\in\cM(k,\hat\nu)$, $\floor{(n+1)/2}$ of the projected observations are also contained in $[\mu(\tilde\nu_u)-\hat\sigma_u-C/2n,\mu(\tilde\nu_u)+\hat\sigma_u+C/2n]$. 
This directly implies that for all $\ubsd$ and $\tilde\nu_u\in\cM(k,\hat\nu)$, it holds that $\sigma(\tilde\nu_u)\leq \sigma(\hat\nu_u)+C/2n$.

It remains to show that on $E_{10}$,
$\sigma(\hat\nu_u)-C/n\leq \sigma(\tilde\nu_u)$. 
First, by definition, for all $\ubsd$ and $\tilde\nu_u\in\cM(k,\hat\nu)$, $\floor{(n+1)/2}-k$ of the projected observations are contained in $[\mu(\tilde\nu_u)-\sigma(\tilde\nu_u),\mu(\tilde\nu_u)+\sigma(\tilde\nu_u)]$. 
Next, note that for all $\ubsd$ there exists $q<p_u< 1-q$ such that $\hat\sigma_u=|\mu(\hat\nu_u)-\hat\xi_{p_u,u}|$. 
To see this, take  $q'<q< 1/4$, and note that by definition,
$$\hat\sigma_u\leq |\hat\xi_{q,u}-\hat\xi_{1/2,u}|\vee |\hat\xi_{1-q,u}-\hat\xi_{1/2,u}|.$$
(This is because $[\hat\xi_{q,u},\hat\xi_{1-q,u}]$ always contains at least half of the observations by definition ($q<1/4$) and, therefore, $\mu_u\pm |\hat\xi_{q,u}-\hat\xi_{1/2,u}|\vee |\hat\xi_{1-q,u}-\hat\xi_{1/2,u}|$ contains at least half of the observations.) 
Therefore, there exists $q<p_u<1-q$ such that $\hat\sigma_u=|\hat\xi_{1/2,u}-\hat\xi_{p_u,u}|$.


Now, by assumption, $2k< n(q-q')$, and so, for all $\ubsd$, there exists $q'<p_u< 1-q'$ such that $\sigma(\tilde\nu_u)=|\mu(\tilde\nu_u)-\hat\xi_{p_u,u}|$. (This is because $\hat\xi_{q,u}$ is at least $2k$ observations away from $\hat\xi_{q',u}$.) Therefore, on $E_{10}$, $\floor{(n+1)/2}$ of the observations lie within
 $[\mu(\hat\nu_u)-\sigma(\tilde\nu_u)-C/n,\mu(\hat\nu_u)+\sigma(\tilde\nu_u)+C/n]$,
which immediately implies that
$\sigma(\hat\nu_u)-C/n\leq \sigma(\tilde\nu_u)$. 
\end{proof}
\begin{proof}[Proof of Theorem~\ref{thm::order_cond_1}]
    The results follows from Lemma~\ref{lem::med_cond_1} and Lemma~\ref{lem::mad_cond_1}.
\end{proof}

\begin{lemma}\label{lem::iqr_cond_1}
Let $\sigma$ be the interquartile range. If Condition~\ref{cond::dens_lb} holds for $(1/4,\nu,k)$, then for all $d,n\geq 1$ and $t>0$ it holds that 
\begin{multline*}
\Prr{\tilde{S}_{n,k}(\sigma)\geq t}\lesssim \exp\left(-3MC/8k+d\log 9+\log(n(1/2+1/n))\right)\\
+\exp\left(-nr^2M^2+d\log(cn/d)\right).
\end{multline*}
\end{lemma}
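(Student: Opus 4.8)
The plan is to follow the template of Lemma~\ref{lem::mad_cond_1} and Theorem~\ref{thm::order_cond_1}; in fact the argument is slightly cleaner here, since the interquartile range is a plain difference of two empirical quantiles with no recentering at the median. First I would write $\sigma(\hat\nu_u)=\hat\xi_{3/4,u}-\hat\xi_{1/4,u}$, i.e.\ the difference of two order statistics of $X_1^\top u,\dots,X_n^\top u$ sitting near positions $\floor{n/4}$ and $\floor{3n/4}$. The key observation is that replacing $k$ rows of the dataset can move each of these two empirical quartiles only by (at most) the larger of the two $k$-step order-statistic spacings immediately above and below it: for any $\tilde\nu\in\am{\hat\nu}{k}$ and any $\ubsd$, the modified $1/4$-quantile lies in $[(X^\top u)_{(\floor{n/4}-k)},(X^\top u)_{(\floor{n/4}+k)}]$, and similarly the modified $3/4$-quantile lies in $[(X^\top u)_{(\floor{3n/4}-k)},(X^\top u)_{(\floor{3n/4}+k)}]$.

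Carrying this through gives, for all $\ubsd$ and all $\tilde\nu\in\am{\hat\nu}{k}$,
\[
|\sigma(\tilde\nu_u)-\sigma(\hat\nu_u)|\le 2\sup_{\substack{v\in\bS^{d-1}\\ p\in[1/4-k/n,\,3/4]}}\bigl[(X^\top v)_{(\floor{pn}+k)}-(X^\top v)_{(\floor{pn})}\bigr],
\]
the factor $2$ coming from the two quartiles, and the $p$-range being exactly what is needed to dominate all four one-sided $k$-spacings (below and above each of $\floor{n/4}$ and $\floor{3n/4}$). Taking the supremum over $u$ and $\tilde\nu$ and applying Lemma~\ref{thm::order_spacing_new_uniform} with $q$ taken just below $1/4$ and with spacing threshold $C/2$ (writing $C=nt$) then produces the two exponential terms in the statement: the $\exp(-nr^2M^2+d\log(cn/d))$ term is inherited verbatim, the threshold $C/2$ turns the $-3MC/4k$ rate of Lemma~\ref{thm::order_spacing_new_uniform} into $-3MC/8k$, and its index range $[q,1-q-k/n]$ yields the $\log\bigl(n(1/2+1/n)\bigr)$ factor up to an absorbed additive $O(k/n)$ inside the logarithm. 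Combining the two error terms is then immediate.

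The only point requiring care — and the one I expect to be the main, purely bookkeeping, obstacle — is matching the hypotheses: one must verify that $k$-contamination really does confine each empirical quartile to the claimed window of $2k$ order statistics (a routine consequence of the definition of the empirical quantile function on a perturbed sample), that the range $p\in[1/4-k/n,\,3/4]$ is both necessary and sufficient, and that Condition~\ref{cond::dens_lb} for $(1/4,\nu,k)$ still supplies the density lower bound $M$ demanded by Lemma~\ref{thm::order_spacing_new_uniform} when that lemma is invoked at a quantile marginally below $1/4$ — which it does, thanks to the $3/n$ slack built into Condition~\ref{cond::dens_lb} together with $k\le 3$ from Condition~\ref{cond::LS_Bound}. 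No genuinely analytic difficulty arises beyond what Lemma~\ref{thm::order_spacing_new_uniform} already provides.
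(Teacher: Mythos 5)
Your argument is correct and follows the same route as the paper: bound the local $k$-sensitivity of the IQR by order-statistic $k$-spacings uniformly over directions and quantile positions, then invoke Lemma~\ref{thm::order_spacing_new_uniform}. If anything you are more careful than the paper's two-line proof, which displays a bound only at the lower quartile and invokes the lemma at $q=1/4$ (whose range $[1/4,\,3/4-k/n]$ omits the below-spacing at $\floor{(n+1)/4}$ and the above-spacing at $\floor{3(n+1)/4}$); pushing $q$ just below $1/4$ and checking that Condition~\ref{cond::dens_lb} at $(1/4,\nu,k)$ still furnishes the density bound $M$ on the required interval, via the $3/n$ slack and $k\le 3$, is exactly what is needed to make the application airtight, and the resulting $\log n(1/2+2k/n+1/n)$ is absorbed into $\log n(1/2+1/n)$ under $\lesssim$.
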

\begin{proof}[Proof of Lemma~\ref{lem::iqr_cond_1}]
Observe that
\begin{align*}
   \tilde{S}_{n,k}(\sigma)\leq 2\sup_{u\in\bS^{d-1}}
| (X^\top u)_{(\floor{(n+1)/4}+k)}-(X^\top u)_{(\floor{(n+1)/4})}|.
\end{align*}
Therefore, applying Lemma~\ref{thm::order_spacing_new_uniform} with $q=1/4$ and $k=k$ yields the result. 
\end{proof}
\section{Proofs and related lemmas from Section~\ref{sec::ptr_general}}\label{app::gen_proofs}
Let $h(A,\hat\nu)=\int_A\exp(-\phi(x,{\hat{\nu})\epsilon/4\eta})dx$. 
\begin{lemma}\label{lemm::safe_cond}
For any $\hat\nu\in \widehat\cM_1(n,d)$, $\tau,\epsilon>0$, $0<\eta\leq \tau$ and $\delta \leq e^{\epsilon/2}/2$ if
\begin{equation}\label{eqn::cond_1_safety}
     \sup_{\substack{\tilde{\nu}\in \am{\hat\nu}{1}\\ x\in A_{\tau,\hat{\nu}}\cup A_{\tau,\tilde{\nu}}}}|\phi(x,\hat{\nu})-\phi(x,\tilde{\nu})|\leq \eta,
\end{equation}
and 
\begin{equation}\label{eqn::cond_2_safety}
     \frac{h(A_{\tau-\eta,\hat{\nu}},\hat{\nu})} { h(A_{\tau+\eta,\hat{\nu}},\hat{\nu})}\geq 1-e^{-\epsilon/2}\delta/3,
\end{equation}
then $(\EM_{\hat{\nu},\phi}(\cdot;\tau,\eta,\epsilon),\hat{\nu})$ is $(\epsilon/2,\delta)$-safe.
\end{lemma}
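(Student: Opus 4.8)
The plan is to verify the two inequalities in the definition of $(\epsilon/2,\delta)$-safety directly, by bounding the ratio of $\EM_{\hat\nu,\phi}(B)$ to $\EM_{\tilde\nu,\phi}(B)$ for an arbitrary measurable $B$ and an arbitrary $\tilde\nu\in\widetilde\cM(\hat\nu,1)$. Recall that $\EM_{\hat\nu,\phi}(B)=h(B\cap A_{\tau,\hat\nu},\hat\nu)/h(A_{\tau,\hat\nu},\hat\nu)$, and similarly with $\tilde\nu$. The key structural fact I would use is the sandwich $A_{\tau-\eta,\hat\nu}\subseteq A_{\tau,\tilde\nu}\subseteq A_{\tau+\eta,\hat\nu}$, which follows from \eqref{eqn::cond_1_safety}: if $\phi(x,\tilde\nu)\le\tau$ then $\phi(x,\hat\nu)\le\phi(x,\tilde\nu)+\eta\le\tau+\eta$ (so $x\in A_{\tau+\eta,\hat\nu}$), and if $\phi(x,\hat\nu)\le\tau-\eta$ then $\phi(x,\tilde\nu)\le\phi(x,\hat\nu)+\eta\le\tau$ (so $x\in A_{\tau,\tilde\nu}$); note that in both implications the point $x$ lies in $A_{\tau,\hat\nu}\cup A_{\tau,\tilde\nu}$, which is where \eqref{eqn::cond_1_safety} is assumed to hold.

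Next I would control the \emph{integrand} pointwise. On the common region $A_{\tau,\hat\nu}\cap A_{\tau,\tilde\nu}$, \eqref{eqn::cond_1_safety} gives $|\phi(x,\hat\nu)-\phi(x,\tilde\nu)|\le\eta$, hence $e^{-\epsilon\eta/4\eta}=e^{-\epsilon/4}\le \exp(-\phi(x,\hat\nu)\epsilon/4\eta)/\exp(-\phi(x,\tilde\nu)\epsilon/4\eta)\le e^{\epsilon/4}$. This lets me compare $h(\cdot,\hat\nu)$ and $h(\cdot,\tilde\nu)$ on any subset of this common region up to a factor $e^{\epsilon/4}$. For the numerator I would write $B\cap A_{\tau,\hat\nu}\subseteq (B\cap A_{\tau,\tilde\nu})\cup (A_{\tau,\hat\nu}\setminus A_{\tau,\tilde\nu})$, so that
\[
h(B\cap A_{\tau,\hat\nu},\hat\nu)\le e^{\epsilon/4}h(B\cap A_{\tau,\tilde\nu},\tilde\nu)+h(A_{\tau,\hat\nu}\setminus A_{\tau,\tilde\nu},\hat\nu),
\]
and by the sandwich the "error" set satisfies $A_{\tau,\hat\nu}\setminus A_{\tau,\tilde\nu}\subseteq A_{\tau+\eta,\hat\nu}\setminus A_{\tau-\eta,\hat\nu}$, whose $h(\cdot,\hat\nu)$-mass is $h(A_{\tau+\eta,\hat\nu},\hat\nu)-h(A_{\tau-\eta,\hat\nu},\hat\nu)\le (e^{\epsilon/2}\delta/3)\,h(A_{\tau+\eta,\hat\nu},\hat\nu)$ by \eqref{eqn::cond_2_safety}. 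For the denominator, again by the sandwich, $h(A_{\tau,\hat\nu},\hat\nu)\ge h(A_{\tau-\eta,\hat\nu},\hat\nu)\ge (1-e^{-\epsilon/2}\delta/3)h(A_{\tau+\eta,\hat\nu},\hat\nu)$, while $h(A_{\tau,\tilde\nu},\tilde\nu)\le e^{\epsilon/4}h(A_{\tau,\tilde\nu},\hat\nu)\le e^{\epsilon/4}h(A_{\tau+\eta,\hat\nu},\hat\nu)$ — wait, I need the denominator ratio the right way; I would instead lower-bound $h(A_{\tau,\hat\nu},\hat\nu)$ and $h(A_{\tau,\tilde\nu},\tilde\nu)$ each against $h(A_{\tau+\eta,\hat\nu},\hat\nu)$ up to constants close to $1$ and $e^{\epsilon/4}$ respectively, and assemble.

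Putting these together, $\EM_{\hat\nu,\phi}(B)\le e^{\epsilon/4}\EM_{\tilde\nu,\phi}(B)\cdot\frac{h(A_{\tau,\tilde\nu},\tilde\nu)}{h(A_{\tau,\hat\nu},\hat\nu)}+\frac{h(A_{\tau,\hat\nu}\setminus A_{\tau,\tilde\nu},\hat\nu)}{h(A_{\tau,\hat\nu},\hat\nu)}$; the first ratio is $\le e^{\epsilon/4}(1-e^{-\epsilon/2}\delta/3)^{-1}$ and the additive term is $\le (e^{\epsilon/2}\delta/3)(1-e^{-\epsilon/2}\delta/3)^{-1}$, so the whole thing is $\le e^{\epsilon/2}\EM_{\tilde\nu,\phi}(B)+\delta$ once one checks $e^{\epsilon/2}\cdot(1-e^{-\epsilon/2}\delta/3)^{-1}\le e^{\epsilon/2}$ is false — so I will need to be careful with the constants, absorbing the $(1-e^{-\epsilon/2}\delta/3)^{-1}\le 3/2$ factor (valid since $\delta\le e^{\epsilon/2}/2$ forces $e^{-\epsilon/2}\delta/3\le 1/6$) and the $e^{\epsilon/4}\cdot e^{\epsilon/4}=e^{\epsilon/2}$ combination into the clean bound; the factor $3$ in \eqref{eqn::cond_2_safety} is precisely chosen to make the final additive error exactly $\delta$. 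The symmetric inequality ($\EM_{\tilde\nu,\phi}(B)\le e^{\epsilon/2}\EM_{\hat\nu,\phi}(B)+\delta$) follows by the same argument with the roles of the two measures swapped; note the sandwich and \eqref{eqn::cond_1_safety} are already symmetric in $\hat\nu,\tilde\nu$, and \eqref{eqn::cond_2_safety}, a statement about $\hat\nu$ alone, transfers to $\tilde\nu$ because $\tilde\nu$'s contour masses are pinched between those of $\hat\nu$ by the sandwich. The main obstacle is purely bookkeeping: tracking which of $e^{\epsilon/4}$, $e^{\epsilon/2}$, and the $(1-e^{-\epsilon/2}\delta/3)^{-1}$ factors land where, and confirming that the $\delta\le e^{\epsilon/2}/2$ hypothesis is exactly what is needed to collapse the product of near-one factors back down so that the additive slack is $\delta$ and the multiplicative constant is $e^{\epsilon/2}$ rather than something slightly larger.
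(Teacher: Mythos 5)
Your proposal is correct and follows essentially the same route as the paper's proof: derive the sandwich $A_{\tau-\eta,\hat\nu}\subseteq A_{\tau,\tilde\nu}\subseteq A_{\tau+\eta,\hat\nu}$ from \eqref{eqn::cond_1_safety}, split the numerator $h(B\cap A_{\tau,\hat\nu},\hat\nu)$ into the overlap region $A_{\tau,\hat\nu}\cap A_{\tau,\tilde\nu}$ (handled by the pointwise $e^{\pm\epsilon/4}$ integrand bound) and the residual strip contained in $A_{\tau+\eta,\hat\nu}\setminus A_{\tau-\eta,\hat\nu}$ (handled by \eqref{eqn::cond_2_safety}), then close the constants using $\delta\le e^{\epsilon/2}/2$ — which is exactly the paper's decomposition $Q(E)\leq \frac{Q(E\cap C)}{P(E\cap C)}P(E)+Q(A_{\tau,\tilde\nu}^c)$ with $C=A_{\tau,\hat\nu}\cap A_{\tau,\tilde\nu}$ written out at the level of the $h$-integrals. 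One small fix to your bookkeeping: the strip's $h(\cdot,\hat\nu)$-mass is at most $e^{-\epsilon/2}\delta/3\cdot h(A_{\tau+\eta,\hat\nu},\hat\nu)$ (not $e^{\epsilon/2}\delta/3$), and the additive term is bounded most cleanly as $1-h(A_{\tau-\eta,\hat\nu},\hat\nu)/h(A_{\tau+\eta,\hat\nu},\hat\nu)\le e^{-\epsilon/2}\delta/3\le \delta/3$ with no $(1-x)^{-1}$ factor, which is how the paper recovers the clean $\delta/3$ and leaves $2\delta/3$ of budget for the multiplicative slack.
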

\begin{proof}
This proof is inspired by the techniques of \citet{Brown2021}. 
Let $Q=\EM_{\hat{\nu},\phi}(\cdot;\tau,\eta,\epsilon)$. For some $\tilde\nu\in \cM(\hat\nu,1)$, let $P=\EM_{\tilde\nu,\phi}(\cdot;\tau,\eta,\epsilon)$. Take $C=A_{\tau,\tilde{\nu}}\cap A_{\tau,\hat{\nu}}$ and $E\in \mathscr{B}(\rdd)$. Then,
\begin{align*}
    Q(E)&=Q(E\cap C)+Q(E\cap C^c)\\
    &= \frac{Q(E\cap C)}{P(E\cap C)}P(E\cap C)+Q(E\cap A_{\tau,\tilde{\nu}}^c)+Q(E\cap A_{\tau,\hat{\nu}}^c)-Q(E\cap A_{\tau,\hat{\nu}}^c\cap A_{\tau,\tilde{\nu}}^c)\\
    &\leq \frac{Q(E\cap C)}{P(E\cap C)}P(E\cap C)+Q(A_{\tau,\tilde{\nu}}^c)\\
    &\leq \frac{Q(E\cap C)}{P(E\cap C)}P(E)+Q(A_{\tau,\tilde{\nu}}^c).
\end{align*}
Analogously, we have that 
\begin{align*}
   P(E)&\leq \frac{P(E\cap C)}{Q(E\cap C)}Q(E\cap C)+P(A_{\tau,\hat\nu}^c) \leq \frac{P(E\cap C)}{Q(E\cap C)}Q(E)+P(A_{\tau,\hat\nu}^c).
\end{align*}
It remains to show the following bounds: 
\begin{align*}
    Q(A_{\tau,\tilde{\nu}}^c),P(A_{\tau,\hat{\nu}}^c)&\leq  \delta/3, \qquad\text{ and }\qquad
    \frac{Q(E\cap C)}{P(E\cap C)},\frac{P(E\cap C)}{Q(E\cap C)}\leq e^{\epsilon/2}+2\delta/3.
\end{align*}
First, \eqref{eqn::cond_1_safety} implies that for $\tilde\nu\in \cM(\hat\nu,1)$, it holds that both
\begin{equation}\label{eqn::subsets}
    A_{\tau-\eta,\tilde{\nu}}\subset A_{\tau,\hat{\nu}}\subset A_{\tau+\eta,\tilde{\nu}}\qquad\text{and}\qquad A_{\tau-\eta,\hat{\nu}}\subset A_{\tau,\tilde{\nu}}\subset A_{\tau+\eta,\hat{\nu}}.
\end{equation}
In addition, noting that \[h(A,\tilde\nu) = \int_{A} e^{-\phi(x,\tilde\nu)}dx = \int_{A} e^{-(\phi(x,\tilde\nu)-\phi(x,\hat\nu))}e^{-\phi(x,\hat\nu)}dx,\] then \eqref{eqn::cond_1_safety} implies that for any $A\subset A_{\tau,\tilde{\nu}} \cup A_{\tau,\hat{\nu}}$,
\begin{equation}\label{eqn::local_privacy}
e^{-\epsilon/4}h(A,\hat\nu)\leq h(A,\tilde{\nu})\leq e^{\epsilon/4}h(A,\hat\nu). 
\end{equation}
Applying \eqref{eqn::subsets} and \eqref{eqn::local_privacy} together with \eqref{eqn::cond_2_safety} yields
{\small
\begin{align*}
    \frac{Q(E\cap C)}{P(E\cap C)}&\leq e^{\epsilon/4}\frac{h(A_{\tau,\tilde{\nu}},\tilde{\nu}) }{h(A_{\tau,\hat{\nu}},\hat\nu)} \leq e^{\epsilon/2}\frac{h(A_{\tau,\tilde{\nu}},\hat{\nu}) }{h(A_{\tau,\hat{\nu}},\hat\nu)}  \leq e^{\epsilon/2}\frac{h(A_{\tau+\eta,\hat{\nu}},\hat{\nu}) }{h(A_{\tau,\hat{\nu}},\hat\nu)}\leq e^{\epsilon/2}(1-e^{-\epsilon/2}\delta/3)^{-1}\leq e^{\epsilon/2} + 2\delta/3,
\end{align*}}
noting that for $x\leq 1/2$, $(1-x)^{-1}\leq 1+1/x$. 
By an analogous argument, 
\begin{align*}
    \frac{P(E\cap C)}{Q(E\cap C)}&\leq e^{\epsilon/2}+2\delta/3.
\end{align*}

Next, applying \eqref{eqn::subsets} and \eqref{eqn::local_privacy} together with \eqref{eqn::cond_2_safety} yields
\begin{align*}
Q(A_{\tau,\tilde{\nu}}^c)&=1-\frac{h(A_{\tau,\tilde{\nu}}\cap A_{\tau,\hat{\nu}},\hat\nu)}{h(A_{\tau,\hat{\nu}},\hat\nu)}\leq 1-\frac{h(A_{\tau-\eta,\hat{\nu}},\hat\nu)}{h(A_{\tau,\hat{\nu}},\hat\nu)}\leq e^{-\epsilon/2}\delta/3 \leq \delta/3.
\end{align*}
Similarly, we have that 
\begin{align*}
 P(A_{\tau,\hat{\nu}}^c) = \frac{h(A_{\tau,\tilde\nu}\cap A_{\tau,\hat\nu}^c, \tilde\nu)}{h(A_{\tau,\tilde\nu}, \tilde\nu)} &\leq \frac{e^{\epsilon/4}h(A_{\tau,\hat{\nu}}^c\cap A_{\tau,\tilde{\nu}},\hat{\nu})}{e^{-\epsilon/4}h(A_{\tau,\tilde{\nu}},\hat{\nu})}\\
 &\leq e^{\epsilon/2}\left(1-\frac{h(A_{\tau,\hat{\nu}}\cap A_{\tau,\tilde{\nu}},\hat{\nu})}{h(A_{\tau,\tilde{\nu}},\hat{\nu})}\right)\\
 &\leq e^{\epsilon/2}\left(1-\frac{h(A_{\tau-\eta,\hat{\nu}}, \hat\nu)}{h(A_{\tau+\eta,\hat{\nu}},\hat{\nu})}\right)\\
 &\leq e^{\epsilon/2}\left(e^{-\epsilon/2}\delta/3\right) = \delta/3. \qedhere
\end{align*}
\end{proof}
\begin{lemma}\label{thm::safety_margin_lower_bound}
If, for $\hat\nu\in\widehat\cM_1(n,d)$, $k\in[n]$, $\epsilon>0$, $0<\delta\leq e^{-\epsilon/2}/2 $ and $\EM_{\hat\nu,\phi}=\EM_{\hat\nu,\phi}(\cdot;\tau,\eta,\epsilon)$ it holds that
\begin{equation}\label{eqn::cond_1_sm}
  \sup_{\substack{\tilde{\nu}\in \am{\hat\nu}{k}\\ x\in A_{\tau+\eta,\hat{\nu}}\cup A_{\tau+\eta,\tilde{\nu}}}}|\phi(x,\hat{\nu})-\phi(x,\tilde{\nu})|\leq \eta/2
\end{equation}
and 
\begin{equation}\label{eqn::cond_2_sm}
\inf_{y>0}e^{-\epsilon y/4\eta} \frac{\vol(A_{\tau+2\eta,\hat\nu}\cap A_{\tau-2\eta,\hat\nu}^c)}{\vol(A_{\tau-y-2\eta,\hat{\nu}})}\leq e^{-\epsilon/4}\delta/3,
\end{equation}
then $\SM(\hat\nu,\EM_{\hat\nu,\phi},\epsilon/2,\delta)\geq k-1$. 
\end{lemma}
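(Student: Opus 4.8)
The plan is to prove the stronger statement that every empirical measure obtained from $\hat\nu$ by altering at most $k-2$ points lies in the safe set $\bfS(\EM_{\hat\nu,\phi},\epsilon/2,\delta)$; by Definition~\ref{dfn::sfty_gen_mar} this is precisely the assertion $\SM(\hat\nu,\EM_{\hat\nu,\phi},\epsilon/2,\delta)\ge k-1$. So fix such a $\tilde\nu$ and an arbitrary $\tilde\nu'\in\am{\tilde\nu}{1}$. Then $\tilde\nu$ and $\tilde\nu'$ each differ from $\hat\nu$ in at most $k$ points, so \eqref{eqn::cond_1_sm} yields $|\phi(x,\hat\nu)-\phi(x,\tilde\nu)|\le\eta/2$ on $A_{\tau+\eta,\hat\nu}\cup A_{\tau+\eta,\tilde\nu}$, and likewise with $\tilde\nu'$ in place of $\tilde\nu$.

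From these two $\eta/2$-bounds I would first extract a package of ``transfer'' estimates: the level-set inclusions $A_{t,\tilde\nu}\subseteq A_{t+\eta/2,\hat\nu}$ and $A_{t,\hat\nu}\subseteq A_{t+\eta/2,\tilde\nu}$ for all $t\le\tau+\eta$ (and the same with $\tilde\nu'$), together with the local-privacy comparisons $e^{-\epsilon/8}h(B,\hat\nu)\le h(B,\tilde\nu)\le e^{\epsilon/8}h(B,\hat\nu)$ valid for $B\subseteq A_{\tau+\eta,\hat\nu}\cup A_{\tau+\eta,\tilde\nu}$ (here $h$ is as defined before Lemma~\ref{lemm::safe_cond}, and the factor is $e^{\beta\eta/2}=e^{\epsilon/8}$ for $\beta=\epsilon/4\eta$), and the $\tilde\nu'$-analogue. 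Chaining the two $\eta/2$-bounds through the triangle inequality gives $|\phi(x,\tilde\nu)-\phi(x,\tilde\nu')|\le\eta$ on $A_{\tau+\eta,\hat\nu}\cup(A_{\tau+\eta,\tilde\nu}\cap A_{\tau+\eta,\tilde\nu'})$, which in particular verifies hypothesis \eqref{eqn::cond_1_safety} of Lemma~\ref{lemm::safe_cond} for $\tilde\nu$ and supplies the corresponding inclusions and local-privacy estimates directly between $\tilde\nu$ and $\tilde\nu'$ (now with constant $e^{\epsilon/4}$).

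Next I would re-run the argument in the proof of Lemma~\ref{lemm::safe_cond} with $Q=\EM_{\tilde\nu,\phi}$, $P=\EM_{\tilde\nu',\phi}$ and $C=A_{\tau,\tilde\nu}\cap A_{\tau,\tilde\nu'}$, but routing every comparison of normalising masses through the contours of $\hat\nu$ rather than those of $\tilde\nu$. Using the transfer estimates, $\frac{Q(E\cap C)}{P(E\cap C)}$ and $\frac{P(E\cap C)}{Q(E\cap C)}$ come out each at most $e^{\epsilon/2}\,\frac{h(A_{\tau+\eta/2,\hat\nu},\hat\nu)}{h(A_{\tau-\eta/2,\hat\nu},\hat\nu)}$, and by the same device $Q(A_{\tau,\tilde\nu'}^c)$ and $P(A_{\tau,\tilde\nu}^c)$ are each at most $\delta/3$. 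The one remaining quantity is $\frac{h(A_{\tau+\eta/2,\hat\nu}\cap A_{\tau-\eta/2,\hat\nu}^c,\hat\nu)}{h(A_{\tau+\eta/2,\hat\nu},\hat\nu)}$: bounding the numerator's integrand by $e^{-\beta(\tau-\eta/2)}$ on the shell, the denominator from below by $e^{-\beta(\tau-2\eta-y)}\vol(A_{\tau-2\eta-y,\hat\nu})$ for arbitrary $y>0$, and using $A_{\tau+\eta/2,\hat\nu}\cap A_{\tau-\eta/2,\hat\nu}^c\subseteq A_{\tau+2\eta,\hat\nu}\cap A_{\tau-2\eta,\hat\nu}^c$, this is at most $e^{-3\epsilon/8}\,e^{-\epsilon y/4\eta}\,\frac{\vol(A_{\tau+2\eta,\hat\nu}\cap A_{\tau-2\eta,\hat\nu}^c)}{\vol(A_{\tau-y-2\eta,\hat\nu})}$, so taking the infimum over $y>0$ and invoking \eqref{eqn::cond_2_sm} bounds it by $e^{-5\epsilon/8}\delta/3\le e^{-\epsilon/2}\delta/3$. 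Since $\delta\le e^{-\epsilon/2}/2$, it follows that $\frac{h(A_{\tau+\eta/2,\hat\nu},\hat\nu)}{h(A_{\tau-\eta/2,\hat\nu},\hat\nu)}\le(1-e^{-\epsilon/2}\delta/3)^{-1}\le 1+2e^{-\epsilon/2}\delta/3$, so the two $E\cap C$ ratios are $\le e^{\epsilon/2}+2\delta/3$. Feeding these into $Q(E)\le\frac{Q(E\cap C)}{P(E\cap C)}P(E)+Q(A_{\tau,\tilde\nu'}^c)$ and its mirror image yields $Q(E)\le e^{\epsilon/2}P(E)+\delta$ and $P(E)\le e^{\epsilon/2}Q(E)+\delta$, i.e.\ $\tilde\nu$ is $(\epsilon/2,\delta)$-safe, which is what was required.

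The step I expect to be the genuine obstacle is the constant chase in the last paragraph. Verifying hypotheses \eqref{eqn::cond_1_safety}--\eqref{eqn::cond_2_safety} of Lemma~\ref{lemm::safe_cond} for $\tilde\nu$ and then merely quoting that lemma does \emph{not} work: because \eqref{eqn::cond_1_sm} only controls $|\phi(\cdot,\hat\nu)-\phi(\cdot,\tilde\nu)|$ by $\eta/2$, a direct verification of \eqref{eqn::cond_2_safety} for $\tilde\nu$ falls short of the needed margin by a factor $e^{\epsilon/8}$. The fix is precisely to compare against the $\tau\pm\eta/2$ (not $\tau\pm\eta$) level sets of $\hat\nu$, where the $\eta/2$ slack in \eqref{eqn::cond_1_sm} and the $2\eta$-plus-infimum slack built into \eqref{eqn::cond_2_sm} combine to recover it; one must also keep careful track of the domains on which each $\phi$-comparison supplied by \eqref{eqn::cond_1_sm} is legitimate, so that all the level-set inclusions used above are justified.
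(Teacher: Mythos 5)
Your proposal is correct, and it follows the paper's high-level strategy (reduce $\SM \ge k-1$ to verifying $(\epsilon/2,\delta)$-safety of each $\tilde\nu$ that differs from $\hat\nu$ in at most $k-2$ rows, then analyse the exponential-mechanism pair $(\EM_{\tilde\nu,\phi},\EM_{\tilde\nu',\phi})$ via level-set comparisons). However, you have gone further and pinpointed a real gap. The paper's own proof tries to verify hypothesis \eqref{eqn::cond_2_safety} of Lemma~\ref{lemm::safe_cond} for $\tilde\nu$ and then invoke that lemma as a black box; as you observe, this cannot close. With $\beta=\epsilon/4\eta$, a careful pass through the $\tilde\nu$ level sets gives, at best, numerator $\le e^{-\beta(\tau-\eta)}\vol(A_{\tau+2\eta,\hat\nu}\cap A_{\tau-2\eta,\hat\nu}^c)$ and denominator $\ge e^{-\beta(\tau-y-3\eta/2)}\vol(A_{\tau-y-2\eta,\hat\nu})$, hence a ratio bounded by $e^{-\epsilon/8}e^{-\epsilon y/4\eta}\cdot(\text{vol ratio})$, and after taking the infimum and applying \eqref{eqn::cond_2_sm} one only obtains $e^{-3\epsilon/8}\delta/3$, which exceeds the required $e^{-\epsilon/2}\delta/3$. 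The paper's displayed chain (its equation \eqref{eqn::numer}) claims a numerator prefactor $e^{-\epsilon(\tau+\eta)/4\eta}$ where the integrand bound only supports $e^{-\epsilon(\tau-2\eta)/4\eta}$, and this over-claim is what makes the paper's arithmetic formally close; so the paper's proof as written is not correct, even though the lemma itself is true.

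Your repair is sound: instead of invoking Lemma~\ref{lemm::safe_cond} on $\tilde\nu$, you re-run its proof with $Q=\EM_{\tilde\nu,\phi}$, $P=\EM_{\tilde\nu',\phi}$, and $C=A_{\tau,\tilde\nu}\cap A_{\tau,\tilde\nu'}$, but you route the normalising-mass comparisons through the $\tau\pm\eta/2$ level sets of $\hat\nu$. Because $A_{\tau,\tilde\nu}\subset A_{\tau+\eta/2,\hat\nu}$, $A_{\tau-\eta/2,\hat\nu}\subset A_{\tau,\tilde\nu}$, and the local-privacy transfer between $\tilde\nu$ and $\hat\nu$ costs only $e^{\epsilon/8}$ (not $e^{\epsilon/4}$), the remaining quantity is $h(A_{\tau+\eta/2,\hat\nu}\cap A_{\tau-\eta/2,\hat\nu}^c,\hat\nu)/h(A_{\tau+\eta/2,\hat\nu},\hat\nu)$, and here the integrand bounds are tighter by $\eta$ on the numerator side (giving $e^{-\beta(\tau-\eta/2)}$) and do not incur the $\eta/2$ loss on the denominator side (giving $e^{-\beta(\tau-y-2\eta)}$), so after \eqref{eqn::cond_2_sm} you get $e^{-5\epsilon/8}\delta/3\le e^{-\epsilon/2}\delta/3$, which is exactly the margin needed. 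Your auxiliary bounds on $Q(A_{\tau,\tilde\nu'}^c)$ and $P(A_{\tau,\tilde\nu}^c)$ by $\delta/3$, and the use of $(1-x)^{-1}\le 1+2x$ with $\delta\le e^{-\epsilon/2}/2$, also check out. One very minor point: the phrase ``each differ from $\hat\nu$ in at most $k$ points'' is loose (in fact $\tilde\nu$ differs in at most $k-2$ and $\tilde\nu'$ in at most $k-1$), but this is harmless since you only need the $k$-row bound of \eqref{eqn::cond_1_sm} to apply, and both the paper and your argument implicitly treat the supremum in \eqref{eqn::cond_1_sm} as covering all perturbations of at most $k$ rows.
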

\begin{proof}
To say that $\SM(\hat\nu,\EM_{\hat\nu,\phi},\epsilon/2,\delta)\geq k-1$ is equivalent to saying that, for every $\tilde\nu\in\cM(\hat\nu,k-1)$, $(\EM_{\tilde\nu,\phi},\tilde{\nu})$ is $(\epsilon/2,\delta)$-safe. Using Lemma~\ref{lemm::safe_cond} on $\tilde\nu$, we need only show that \eqref{eqn::cond_1_safety} and \eqref{eqn::cond_2_safety} hold. That is 
\[\sup_{\substack{\nu' \in \am{\tilde\nu}{1}\\ x\in A_{\tau,\tilde{\nu}}\cup A_{\tau,\nu'}}}|\phi(x,\tilde{\nu})-\phi(x,\nu')|\leq \eta,\qquad\text{and}\qquad\frac{h(A_{\tau-\eta,\tilde{\nu}},\tilde{\nu})} { h(A_{\tau+\eta,\tilde{\nu}},\tilde{\nu})}\geq 1-e^{-\epsilon/2}\delta/3.\]

First, the assumption \eqref{eqn::cond_1_sm}, together with the triangle inequality, implies that, for every $v' \in \am{\tilde{\nu}}{1}$ and $x\in A_{\tau+\eta,\tilde{\nu}}\cup A_{\tau+\eta,\nu'}$, \begin{multline*}
    |\phi(x, \nu') - \phi(x, \tilde\nu)| = |\phi(x, \nu') - \phi(x,\hat\nu) + \phi(x,\hat\nu) - \phi(x, \tilde\nu)| \\ 
    \leq 2\sup_{\substack{\tilde{\nu}\in \am{\hat\nu}{k}\\ x\in A_{\tau+\eta,\hat{\nu}}\cup A_{\tau+\eta,\tilde{\nu}}}}|\phi(x,\hat{\nu})-\phi(x,\tilde{\nu})| \leq \eta.
\end{multline*}
This, taken together with the fact that $A_{\tau,\nu}\subseteq A_{\tau+\eta,\nu}$ for every $\nu$, implies that \eqref{eqn::cond_1_safety} holds. 

Next, note that \[\frac{h(A_{\tau-\eta,\tilde{\nu}},\tilde{\nu})} { h(A_{\tau+\eta,\tilde{\nu}},\tilde{\nu})} = \frac{h(A_{\tau-\eta,\tilde{\nu}}\cap A_{\tau+\eta,\tilde{\nu}},\tilde{\nu})}{ h(A_{\tau+\eta,\tilde{\nu}},\tilde{\nu})} = \EM_{\tilde{\nu},\phi}(A_{\tau-\eta,\tilde{\nu}};\tau+\eta,\eta,\epsilon).\] As a result, \eqref{eqn::cond_2_safety} is equivalent to \[\EM_{\tilde{\nu},\phi}(A_{\tau-\eta,\tilde{\nu}};\tau+\eta,\eta,\epsilon) \geq 1 - e^{-\epsilon/2}\delta/3 \implies e^{-\epsilon/2}\delta/3 \geq 1 - \EM_{\tilde{\nu},\phi}(A_{\tau-\eta,\tilde{\nu}};\tau+\eta,\eta,\epsilon).\] Thus, we turn to proving the condition
\begin{equation}\label{eqn::goal}
    1 - \EM_{\tilde{\nu},\phi}(A_{\tau-\eta,\tilde{\nu}};\tau+\eta,\eta,\epsilon) = \frac{h(A_{\tau-\eta,\tilde{\nu}}^c\cap A_{\tau+\eta,\tilde{\nu}},\tilde{\nu})} { h(A_{\tau+\eta,\tilde{\nu}},\tilde{\nu})}\leq e^{-\epsilon/2}\delta/3.
\end{equation}

To this end, note that for any $A\subset A_{\tau+\eta,\tilde{\nu}} \cup A_{\tau+\eta,\hat{\nu}}$, \[e^{-\epsilon/4}h(A,\nu') \leq h(A,\tilde{\nu}) \leq e^{\epsilon/4}h(A,\nu').\] Moreover, \eqref{eqn::cond_1_sm} implies that $A_{\tau,\tilde{\nu}} \subset A_{\tau+\eta, \nu'} \subset A_{\tau+2\eta,\tilde{\nu}}$ and $A_{\tau,\nu'} \subset A_{\tau+\eta, \tilde\nu} \subset A_{\tau+2\eta,\nu'}$. \eqref{eqn::subsets}
Applying these two facts yields that for any $y>0$, it holds that
{\small
\begin{equation}\label{eqn::dnom}
    h(A_{\tau+\eta,\tilde\nu},\tilde\nu)\geq h(A_{\tau-y-2\eta,\tilde\nu},\tilde\nu)\geq e^{-\epsilon/4} h(A_{\tau-y-2\eta,\nu'},\nu') \geq  e^{-\epsilon(\tau-y-\eta)/(4\eta)}\vol(A_{\tau-y-2\eta,\nu'}).
\end{equation}}
Next, we bound the numerator in the left-hand side of \eqref{eqn::goal} using the same two facts,
\begin{equation}\label{eqn::numer}
h(A_{\tau+\eta,\tilde\nu}\cap A_{\tau-\eta,\tilde\nu}^c,\tilde\nu)\leq e^{\epsilon/4}h(A_{\tau+2\eta,\hat\nu}\cap A_{\tau-2\eta,\hat\nu}^c,\hat\nu)\leq e^{-\epsilon(\tau+\eta)/(4\eta)}\vol(A_{\tau+2\eta,\hat\nu}\cap A_{\tau-2\eta,\hat\nu}^c).
\end{equation}
Lastly, combining \eqref{eqn::dnom} and \eqref{eqn::numer} with \eqref{eqn::cond_2_sm} yields
\begin{align*}
   \frac{h(A_{\tau-\eta,\tilde{\nu}}^c\cap A_{\tau+\eta,\tilde{\nu}},\tilde{\nu})} { h(A_{\tau+\eta,\tilde{\nu}},\tilde{\nu})}&\leq e^{-\epsilon(y+\eta)/4\eta} \frac{\vol(A_{\tau+2\eta,\hat\nu}\cap A_{\tau-2\eta,\hat\nu}^c)}{\vol(A_{\tau-y-2\eta,\hat{\nu}})}\leq e^{-\epsilon/2}\delta/3,
\end{align*}
which completes the proof. 
\end{proof}
\begin{proof}[Proof of Lemma~\ref{lem::gen_bound_no_reply}]
By a union bound, for all $k\in \{0\}\cup [n-1]$, it holds that
\begin{align*}
   1-\E{}{\lambda(\hat\nu,\epsilon,\delta)}&=  \Prr{\SM(\hat\nu,Q)+\frac{2}{\epsilon}W<\frac{2\log(1/2\delta)}{\epsilon}}\\
   &\leq \Prr{\SM(\hat\nu,Q)< k\big | \frac{2\log(1/2\delta)}{\epsilon}-\frac{2}{\epsilon}W\leq  k}\\
   &\hspace{12em}+\Prr{\frac{2\log(1/2\delta)}{\epsilon}-\frac{2}{\epsilon}W>k}\\
   &\leq \Prr{\SM(\hat\nu,Q)< k}+\Prr{W<\log(1/2\delta)-\epsilon k/2}.
\end{align*}
It is immediate from the fact that the above bound holds for all $k\in \{0\}\cup [n-1]$ that 
\begin{align*}
   1-\E{}{\lambda(\hat\nu,\epsilon,\delta)}&\leq \inf_{k\in \{0\}\cup [n-1]}\left[\Prr{\SM(\hat\nu,Q)< k}+\Prr{W<\log(1/2\delta)-\epsilon k/2}\right].
\end{align*}
Applying Lemma~\ref{thm::safety_margin_lower_bound} yields that, for all $k\in\{0\}\cup [n-1]$, it holds that
\begin{multline*}
     \Prr{\SM(\hat\nu,Q) < k}\leq \Prrr{    \sup_{\substack{\tilde{\nu}\in \am{\hat\nu}{k+1}\\ x\in A_{\tau+\eta,\hat{\nu}}\cup A_{\tau+\eta,\tilde{\nu}}}}|\phi(x,\hat{\nu})-\phi(x,\tilde{\nu})|>\eta/2}\\
     +\Prr{\inf_{y>0}e^{-\epsilon y/4\eta} \frac{\vol(A_{\tau+2\eta,\hat\nu}\cap A_{\tau-2\eta,\hat\nu}^c)}{\vol(A_{\tau-y-2\eta,\hat{\nu}})}> e^{-\epsilon/4}\delta/3}.
\end{multline*}
Combining the two bounds gives the result. 
\end{proof}
\section{Results concerning Condition~\ref{cond::Pop_Bound}}
Here, we show that the trimmed estimators, median and median absolute deviation satisfy Condition~\ref{cond::Pop_Bound}. 
\begin{lemma}\label{lem::cond_2_med}
Let $\mu$ be the median. 
If Condition~\ref{cond::dens_lb} holds for $(1/2,\nu,0)$ then for all $t\geq 0$ and $n,d\geq 1$
\begin{equation*}
    \Prr{S_{n}(\mu)\geq t}\lesssim \exp\left(-2nM^2t^2+d\log(cn/d)\right).
\end{equation*}
\end{lemma}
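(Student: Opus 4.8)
The statement to prove is \textbf{Lemma~\ref{lem::cond_2_med}}: if $\mu$ is the median and Condition~\ref{cond::dens_lb} holds for $(1/2,\nu,0)$, then $\Prr{S_n(\mu)\geq t}\lesssim \exp(-2nM^2t^2 + d\log(cn/d))$, where $S_n(\mu) = \sup_{u\in\bS^{d-1}}|\med(\hat\nu_u) - \med(\nu_u)|$.

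My plan is to reduce this directly to \textbf{Lemma~\ref{lem::quantile_process_concen}}, which already gives a uniform (in $u$ and in the quantile level) concentration bound for the projected empirical quantile process. Indeed, $\med(\hat\nu_u) = \hat\xi_{1/2,u}$ (up to the $\tm_{1/2-1/2n}$ convention, which locates the sample median at an order statistic near position $n/2$; I would note that the resulting discrepancy between $\tm_{1/2-1/2n}(\hat\nu_u)$ and $\hat\xi_{1/2,u}$ is $O(1/n)$ in each direction and can be absorbed, or alternatively apply the quantile-process bound on a short interval $[1/2 - 1/2n,\, 1/2]$ of levels) and $\med(\nu_u) = \xi_{1/2,u}$. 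So
\[
S_n(\mu) = \sup_{u\in\bS^{d-1}}|\hat\xi_{1/2,u} - \xi_{1/2,u}| \le \sup_{u\in\bS^{d-1}}\sup_{x\in[q_1,q_2]}|\xi_{x,u}-\hat\xi_{x,u}|
\]
for any interval $[q_1,q_2]\ni 1/2$; taking $q_1 = q_2 = 1/2$ (or the short interval above) and invoking Lemma~\ref{lem::quantile_process_concen} gives
\[
\Prr{S_n(\mu)\ge t}\le \Big(\tfrac{cn}{d}\Big)^d e^{-2nt^2\lambda_{t,1/2,1/2}^2}.
\]

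The next step is to lower-bound $\lambda_{t,1/2,1/2} = \inf_{u\in\bS^{d-1}}\inf_{z\in(\xi_{1/2,u}-t,\xi_{1/2,u}+t)} f_{\nu_u}(z)$ by $M$. This is where Condition~\ref{cond::dens_lb} with $q = 1/2$ enters: that condition states $\inf_{x\in\mathcal{I};\,u\in\bS^{d-1}} f_{\nu_u}(x) > M$ where $\mathcal{I} = [\xi_{1/2-3/n,u}-r,\ \xi_{1/2+3/n,u}+r]$. For $q = 1/2$, $\xi_{1/2-3/n,u}\le \xi_{1/2,u}\le \xi_{1/2+3/n,u}$, so $(\xi_{1/2,u}-r,\xi_{1/2,u}+r)\subset\mathcal{I}$, hence for $t \le r$ the infimum of the density over the $t$-neighbourhood of the projected median is at least $M$, giving $\lambda_{t,1/2,1/2}\ge M$. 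For $t > r$ the bound $e^{-2nM^2t^2 + d\log(cn/d)}$ is weaker than $e^{-2nM^2r^2+d\log(cn/d)}$ and the statement is trivially implied (or one simply notes $S_n(\mu)$ large forces a deviation of at least $r$, handled by the $t=r$ case); I would dispatch this boundary case in one line. Rewriting $(cn/d)^d = \exp(d\log(cn/d))$ then yields exactly the claimed form.

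The main "obstacle" is really just the bookkeeping around the median convention: making sure that $\med(\hat\nu_u)=\tm_{1/2-1/2n}(\hat\nu_u)$ is close enough to $\hat\xi_{1/2,u}$ that the $(1/2,\nu,0)$ instance of Condition~\ref{cond::dens_lb} still controls the relevant density neighbourhood — this is why the condition is stated with the $\pm 3/n$ cushion in the quantile levels, which comfortably covers the $O(1/n)$ slack. Everything else is a direct substitution into Lemma~\ref{lem::quantile_process_concen} and Condition~\ref{cond::dens_lb}, so the proof is short.
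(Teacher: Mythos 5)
Your proof is correct and takes exactly the route the paper intends: the paper's own proof is the one-line remark that the result follows from Lemma~\ref{lem::quantile_process_concen}, and you have simply filled in the substitution (take the interval to be $\{1/2\}$, lower-bound $\lambda_{t,1/2,1/2}$ by $M$ via Condition~\ref{cond::dens_lb}, and absorb the $(cn/d)^d$ factor into the exponent). The extra care you take with the $\tm_{1/2-1/2n}$ median convention and the $t>r$ boundary case is sound bookkeeping that the paper leaves implicit.
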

\begin{proof}
The proof follows from Lemma~\ref{lem::quantile_process_concen}. 
\end{proof}
\begin{lemma}\label{lem::cond_2_mad}
Let $\sigma$ be the median absolute deviation. 
If for some $q'$ that satisfies $$0<q'<\inf_{\ubsd}\argmin_{0<p\leq 1/2}|\xi_{1/2,u}-\xi_{p,u}-(\xi_{p+1/2,u}-\xi_{1/2,u})|,$$ Condition~\ref{cond::dens_lb} holds for $(q',\nu,0)$ with $r\geq 1/3M$,
and $\sup_{\substack{x\in\mathcal{I}; \ubsd}}f_{\nu_u}(x)<L<\infty$, then for all $t\geq 0$ and $n,d\geq 1$
\begin{equation*}
     \Prr{S_{n}(\sigma)\geq t}\lesssim \exp\left(-2nM^2t^2/16^2\vee -2n[(1/2-q')M/L\wedge 1/3]^2+d\log(cn/d)\right).
\end{equation*}
\end{lemma}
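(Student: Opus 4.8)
The plan is to decompose the deviation of the sample MAD from its population value into a contribution from the median and a contribution from the relevant projected quantiles, then control each piece via the uniform quantile concentration already established in Lemma~\ref{lem::quantile_process_concen}. Concretely, write $\hat\sigma_u = \mad(\hat\nu_u)$ and $\sigma_u = \mad(\nu_u)$. Recall that the population MAD in direction $u$ can be written as $\sigma_u = |\xi_{1/2,u} - \xi_{p_u,u}|$ for the minimizing index $p_u$ in the definition of the MAD, and the sample MAD satisfies the analogous identity with an empirical minimizing index. First I would argue, exactly as in the proof of Lemma~\ref{lem::mad_cond_1}, that the assumption $q' < \inf_{\ubsd}\argmin_{0<p\le 1/2}|\xi_{1/2,u}-\xi_{p,u}-(\xi_{p+1/2,u}-\xi_{1/2,u})|$ forces the empirical minimizing index to lie in $[q', 1-q']$ with high probability, so that $\hat\sigma_u$ is determined by empirical quantiles $\hat\xi_{x,u}$ with $x \in [q',1-q']$ — this is where the lower bound $r \ge 1/(3M)$ and the upper density bound $L$ enter, since they guarantee that a small perturbation of the quantiles cannot move the minimizer outside the safe range, via a quantitative version of the strict-minimum argument.

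Next, on the good event I would bound
$$|\hat\sigma_u - \sigma_u| \le |\hat\xi_{1/2,u} - \xi_{1/2,u}| + \sup_{x\in[q',1-q']}|\hat\xi_{x,u} - \xi_{x,u}| + (\text{an } O(1/n)\text{ discretization term}),$$
uniformly in $u$, where the third term comes from comparing the discrete argmin over $j/n$ with the continuous argmin. Then I would invoke Lemma~\ref{lem::quantile_process_concen} with $q_1 = q'$, $q_2 = 1-q'$: the lower density bound on the interval $\mathcal I$ gives $\lambda_{t,q',1-q'}^2 \gtrsim M^2$, so
$$\Prr{\sup_{\substack{u\in\bS^{d-1}\\ x\in[q',1-q']}}|\hat\xi_{x,u}-\xi_{x,u}| \ge t} \lesssim (cn/d)^d e^{-2nM^2 t^2}.$$
The constant $16$ in the exponent accounts for splitting the total deviation budget $t$ across the (at most) two quantile terms and the factor-of-two safety loss from the argmin-location argument. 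For the second term in the exponent of the claimed bound, $\exp(-2n[(1/2-q')M/L \wedge 1/3]^2 + d\log(cn/d))$, I would show that the good event on which the empirical minimizer stays in $[q',1-q']$ fails only when some empirical quantile in a band of width $\gtrsim (1/2-q')$ deviates by more than the gap between $\xi_{p_u,u}$-type quantities, which is $\gtrsim (1/2-q')M/L$ by a two-sided density sandwich; applying Lemma~\ref{lem::quantile_process_concen} (or a DKW-type bound directly) at that scale produces this term, and the $1/3$ cap reflects the constraint tied to $r \ge 1/(3M)$.

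The main obstacle I expect is the argmin-stability step: showing that the empirical minimizing index stays in $[q',1-q']$ and that the discrete/continuous argmin discrepancy is $O(1/n)$ requires a genuinely quantitative analysis of the function $p \mapsto |\hat\xi_{1/2,u} - \hat\xi_{p,u}| - |\hat\xi_{p+1/2,u} - \hat\xi_{1/2,u}|$ near its minimum — one needs a lower bound on its curvature or on the gap to the boundary, which is exactly what the combination of the lower bound $M$, the upper bound $L$, and the margin $r \ge 1/(3M)$ is engineered to supply, and assembling these into a clean union bound uniform over $u$ is the delicate part. Everything else is bookkeeping: a triangle inequality, a union bound over the two exponential terms, and absorbing universal constants with $\lesssim$.
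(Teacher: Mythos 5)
Your high-level plan matches the paper's: control the location of the empirical MAD-defining quantile index, establish uniform quantile concentration on $[q',1-q']$ via Lemma~\ref{lem::quantile_process_concen}, and glue these together with a triangle inequality and a union bound. However, there is a gap in two related places.

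First, your explicit decomposition
$$|\hat\sigma_u - \sigma_u| \le |\hat\xi_{1/2,u} - \xi_{1/2,u}| + \sup_{x\in[q',1-q']}|\hat\xi_{x,u} - \xi_{x,u}| + O(1/n)$$
is incomplete. Writing $\sigma_u = |\xi_{1/2,u}-\xi_{q_u,u}|$ with $q_u$ the population minimizer, and $\hat\sigma_u$ in terms of the empirical minimizer $\hat q_u$, the triangle inequality produces \emph{three} terms, the third being $|\xi_{\hat q_u,u} - \xi_{q_u,u}|$ — the displacement of the population quantile evaluated at the moved argmin. This term is \emph{not} a quantile concentration term: it measures how far the argmin itself moved, and controlling it is the heart of the proof. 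Your decomposition silently assumes it can be absorbed into the $\sup$ term, which requires a separate argument.

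Second, the route you propose for that argument — a curvature or gap-to-boundary bound on $p\mapsto|\hat\cC_u(p)|$ near its minimum — is not what makes the proof work, and is unlikely to succeed cleanly. The paper instead uses a sign argument: since $\cC_u(q_u)=0$ under absolute continuity, and since $\xi_{q_u,u}-\xi_{p,u}$ and $\xi_{q_u+1/2,u}-\xi_{p+1/2,u}$ always share a sign (quantile functions are monotone), one gets the exact lower bound $|\cC_u(p)| = |\xi_{q_u,u}-\xi_{p,u}| + |\xi_{q_u+1/2,u}-\xi_{p+1/2,u}| \ge |\xi_{q_u,u}-\xi_{p,u}|$. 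Combined with $|\hat\cC_u(p)-\cC_u(p)|\le 4\sup_x|\hat\xi_{x,u}-\xi_{x,u}|$ and the minimality of $\hat q_u$, this yields $|\xi_{\hat q_u,u}-\xi_{q_u,u}|\le 8\sup_x|\hat\xi_{x,u}-\xi_{x,u}|$ — that is where the constant $16$ in the exponent comes from, not a deviation-budget split. No curvature, no discretization term, and no $O(1/n)$ correction appear. If you pursue the curvature route you will need second-derivative control on the projected quantile functions, which Condition~\ref{cond::dens_lb} does not provide; the sign trick avoids this entirely.

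The part you describe for the failure event — empirical median escaping past $\xi_{q',u}$, controlled at scale $\gtrsim (1/2-q')/L$ via the upper density bound, plus the $1/(3M)$ cap from the $r\ge 1/(3M)$ hypothesis — matches the paper's treatment of the bad event and is fine.
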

\begin{proof}
Let $$\cC_u(p)=\xi_{1/2,u}-\xi_{p,u}-(\xi_{p+1/2,u}-\xi_{1/2,u}),$$
and $q_u=\argmin_{0<p\leq 1/2}|\cC_u(p)|$. 
Now, note that under absolute continuity of $\nu_u$, it holds that $\sigma(\nu_u)=|\xi_{1/2,u}-\xi_{q_u,u}|$. 
This is because the median absolute deviation of a measure $P$ is half the length of the symmetric interval about the median, which contains 1/2 of the probability mass of $P$ \citep{Rousseeuw1993}. 
Now, analogously, let
$$\hat\cC_u(p)=\hat\xi_{1/2,u}-\hat\xi_{p,u}-(\hat\xi_{p+1/2,u}-\hat\xi_{1/2,u})\qquad\text{and}\qquad\tilde\cC_u(p)=\hat\xi_{1/2,u}-\hat\xi_{p-1/2,u}-(\hat\xi_{p,u}-\hat\xi_{1/2,u}),$$
with 
$$\hat q_u=\argmin_{0\leq p\leq 1/2}|\hat\cC_u(p)|\qquad\text{and}\qquad \tilde q_u=\argmin_{1/2\leq p\leq 1}|\tilde\cC_u(p)|.$$
It follows by definition of the median absolute deviation that for any $\ubsd$, 
$$|\hat\xi_{1/2,u}-\hat\xi_{\hat q_u,u}|\wedge |\hat\xi_{1/2,u}-\hat\xi_{\tilde q_u,u}|\leq \sigma(\hat\nu_u)\leq  |\hat\xi_{1/2,u}-\hat\xi_{\hat q_u,u}|\vee |\hat\xi_{1/2,u}-\hat\xi_{\tilde q_u,u}|.$$ 
This fact directly implies that for any $\ubsd$, either
{\small
\begin{equation}\label{eqn::sigma_bound_1}
    |\sigma(\hat\nu_u)-\sigma(\nu_u)|\leq|\xi_{1/2,u}-\xi_{q_u,u}-(\hat\xi_{1/2,u}-\hat\xi_{\hat q_u,u})|\leq |\xi_{1/2,u}-\hat\xi_{1/2,u}|+|\xi_{\hat q_u,u}-\xi_{q_u,u}|+|\xi_{\hat q_u,u}-\hat\xi_{\hat q_u,u}|,
\end{equation}}
holds, or 
\begin{equation}\label{eqn::sigma_bound_2}
    |\sigma(\hat\nu_u)-\sigma(\nu_u)|\leq |\xi_{1/2,u}-\hat\xi_{1/2,u}|+|\xi_{\tilde q_u,u}-\xi_{q_u+1/2,u}|+|\xi_{\tilde q_u,u}-\hat\xi_{\tilde q_u,u}|,
\end{equation}
holds. 
Next, we bound $\hat q_u$ below and $\tilde q_u$ above for all $\ubsd$. 
Consider $w\in\re$ such that $q'<w<\inf_{\ubsd} q_u$. 
Now, it follows from Condition~\ref{cond::dens_lb} and an application of the mean value theorem that $$\xi_{w,u}-\xi_{q',u}\leq (w-q')/\inf_{\substack{ \ubsd\\ x\in(\xi_{q',u},\xi_{w,u})}}f_u(x)\leq M^{-1}.$$
Now, using Condition~\ref{cond::dens_lb} and the assumption that $r>1/3M$, together with an application of Lemma~\ref{lem::quantile_process_concen} yields
\begin{equation}\label{eqn::quantiles_close_w}
   \Prrr{\sup_{\substack{ \ubsd\\x\in \{w,1/2\}}}|\hat\xi_{x,u}-\xi_{x,u}|>1/3M}\lesssim  \left(\frac{cn}{d}\right)^de^{-2n/9}. 
\end{equation}
Next, we show that the probability that $(\hat\xi_{w,u},\hat\xi_{1/2,u}+(\hat\xi_{1/2,u}-\hat\xi_{w,u}))=(\hat\xi_{w,u},2\hat\xi_{1/2,u}-\hat\xi_{w,u})$ contains less than 1/2 of the projected observations for at least one $\ubsd$ is small. 
This immediately implies that $\hat q_u\geq w>q'$ for any $\ubsd$. 
To this end, \eqref{eqn::quantiles_close_w} implies that with high probability, $$(\hat\xi_{w,u},2\hat\xi_{1/2,u}-\hat\xi_{w,u})\supset (\xi_{w}+1/3M,2\xi_{1/2,u}-\xi_{w}-1/M)\supset (\xi_{q'},2\xi_{1/2}-\xi_{q'}).$$ 
Now, we bound the probability that $(\xi_{q',u},2\xi_{1/2,u}-\xi_{q',u})$ contains less than 1/2 of the projected observations. 
This is equivalent to bounding:
\begin{equation}\label{eqn::two_terms}
        \Prrr{\bigcup_{\ubsd}\{ \hat\xi_{1/2,u}<\xi_{q',u}\}}\qquad\text{and}\qquad\Prrr{\bigcup_{\ubsd} \{\hat\xi_{1/2,u}>2\xi_{1/2,u}-\xi_{q',u}\}}.
\end{equation}
Starting with the left-hand term, utilizing the mean value theorem, together with the upper bound on the projected density and an application of Lemma~\ref{lem::quantile_process_concen}, yields
\begin{align*}
   \Prrr{\bigcup_{\ubsd}\{ \hat\xi_{1/2,u}<\xi_{q',u}\}}&\leq  \Prr{\sup_{\ubsd} |\hat\xi_{1/2,u}-\xi_{1/2,u}|>\inf_{\ubsd}[\xi_{1/2,u}-\xi_{q',u}]}\\
    &\leq \Prr{\sup_{\ubsd} |\hat\xi_{1/2,u}-\xi_{1/2,u}|>(1/2-q')/L}\\
    &\leq\left(\frac{cn}{d}\right)^de^{-2nM^2(1/2-q')^2/L^2}.
\end{align*}
Similarly, the right-hand term in \eqref{eqn::two_terms} satisfies
\begin{align*}
   \Prrr{\bigcup_{\ubsd} \{\hat\xi_{1/2,u}>\xi_{1/2,u}+\xi_{q',u}\}}&\leq\left(\frac{cn}{d}\right)^de^{-2nM^2(1/2-q')^2/L^2}.
\end{align*}
Thus, $$\Prr{\inf_{\ubsd}\hat q_u<q'}\lesssim \left(\frac{cn}{d}\right)^de^{-2nM^2[(1/2-q')/L\wedge M/3]^2}.$$
An analogous argument gives that  
$$\Prrr{\sup_{\ubsd}\tilde q_u>1-q'}\lesssim  \left(\frac{cn}{d}\right)^de^{-2nM^2[(1/2-q')/L\wedge M/3]^2}.$$
Let $E_{11}=\{q'<\hat q_u<\tilde q_u\leq 1-q'\}$ and note that we have just shown that
\begin{equation}\label{eqn::q_bounds}
    \Prr{E_{11}^c}\lesssim \left(\frac{cn}{d}\right)^de^{-2nM^2[(1/2-q')/L\wedge M/3]^2}.
\end{equation}

The next step is to show that $\{\sup_{\ubsd}|\xi_{\hat q,u}-\xi_{q,u}|\geq t|E_{11}\}\subset \{\sup_{\substack{\ubsd\\ q'<p<1-q'}}|\xi_{p,u}-\hat \xi_{p,u}|\gtrsim t|E_{11}\}$. 
To this end, observe that 
\begin{align*}
   \{\sup_{\ubsd}|\xi_{\hat q,u}-\xi_{q,u}|\geq t|E_{11}\}&=\bigcup_{\ubsd}\{\inf_{\substack{q'<p<1/2\\ |\xi_{p,u}-\xi_{q,u}|>t}} |\hat\cC_u(p)|-|\hat\cC_u(q)|\leq 0\}\\
   &=\bigcup_{\ubsd}\{|\hat\cC_{u}(q)|-\inf_{\substack{q'<p<1/2\\ |\xi_{p,u}-\xi_{q,u}|>t}} |\hat\cC_{u}(p)|\geq 0\}\\
   &\subset\{8\sup_{\substack{\ubsd\\ q'<p<1-q'}}|\xi_{p,u}-\hat\xi_{p,u}|\geq \inf_{p<1/2,\ |\xi_{p,u}-\xi_{q,u}|>t} |\cC_u(p)|\}.
\end{align*}
It remains to show that for any $\ubsd$, $\inf_{0<p<1/2,\ |\xi_{p,u}-\xi_{q,u}|>t} |\cC_u(p)|\gtrsim t$. 
This is equivalent to
\begin{align*}
 \inf_{\substack{0<p<1/2\\ |\xi_{p,u}-\xi_{q,u}|>t}} |\xi_{q,u}-\xi_{p,u}+\xi_{q+1/2,u}-\xi_{p+1/2,u}|\gtrsim t.
\end{align*}
Now,
\begin{multline*}
      \inf_{\substack{0<p<1/2\\ |\xi_{p,u}-\xi_{q,u}|>t}}|\xi_{q,u} - \xi_{p,u} + \xi_{q+1/2,u} - \xi_{p+1/2,u}|\\
      =\inf_{\substack{0<p<1/2\\ |\xi_{p,u}-\xi_{q,u}|>t}}[ |\xi_{q,u} - \xi_{p,u}| + |\xi_{q+1/2,u} - \xi_{p+1/2,u}|]\geq t.
\end{multline*}
The first equality comes from the fact that $|\xi_{p,u}-\xi_{q,u} + \xi_{q+1/2,u} - \xi_{p+1/2,u}|$ is of the form of $|a + b|$ where $sign(a) = sign(b)$. 
The second inequality comes from the fact that we are minimizing over $|\xi_{p,u}-\xi_{q,u}|>t$ and the fact that the second term is non-negative. 
Thus,
\begin{equation}\label{eqn::hard_step}
       \{|\xi_{\hat q,u}-\xi_{q,u}|\geq t|E_{11}\}\subset\{\sup_{q'<p<1-q'}|\xi_{p,u}-\hat\xi_{p,u}|\geq t/8\}.
\end{equation}
By an analogous argument,
\begin{equation}\label{eqn::hard_step_2}
       \{|\xi_{ q,u}-\xi_{\tilde q,u}|\geq t|E_{11}\}\subset\{\sup_{q'<p<1-q'}|\xi_{p,u}-\hat\xi_{p,u}|\geq t/8\}.
\end{equation}
Now, combining equations \eqref{eqn::sigma_bound_1}, \eqref{eqn::sigma_bound_2}, \eqref{eqn::q_bounds}, \eqref{eqn::hard_step} and \eqref{eqn::hard_step_2},
\begin{align*}
    \Prr{S_{n}(\sigma)\geq t}&\lesssim \Prr{\sup_{q'<p<1-q'}|\xi_{p,u}-\hat\xi_{p,u}|\geq t/16}+\left(\frac{cn}{d}\right)^de^{-2n[(1/2-q')M/L\wedge 1/3]^2}\\
    &\lesssim 
    \left(\frac{cn}{d}\right)^de^{-2nM^2t^2/16^2}+\left(\frac{cn}{d}\right)^de^{-2n[(1/2-q')M/L\wedge 1/3]^2}. \qedhere
\end{align*}
\end{proof}
\begin{lemma}\label{lemm::cond2-gaussian-trimmed}
Suppose that $\nu=\cN(\mu,\Sigma)$. Then there exists a universal constant $c>0$ such that for all $0\leq \alpha<1/2$ Condition~\ref{cond::Pop_Bound} holds for $(\nu,\tm_\alpha,\tad_\alpha)$ with $$\zeta_{n,\mu}(t)=\zeta_{n,\sigma}(t)=e^{-cn(1-2\alpha)t^2/\norm{\Sigma^{-1/2}}^2+d}.$$
\end{lemma}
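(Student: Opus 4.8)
The plan is to use Gaussian concentration of measure after expressing everything as a function of a standard Gaussian vector. By translation equivariance of $\tm_\alpha$ and translation invariance of $\tad_\alpha$ we may take $\mu=0$; then for $\nu=\cN(0,\Sigma)$ we have $\tm_\alpha(\nu_u)=0$, and writing $X_i=\Sigma^{1/2}Z_i$ with $Z=(Z_1,\dots,Z_n)\sim\cN(0,I_{nd})$, every quantity of interest becomes a function of $Z$. The first step is the structural fact that, for each fixed $u$ and each $T\in\{\tm_\alpha,\tad_\alpha\}$, the map $Z\mapsto T(\hat\nu_u)$ is Lipschitz with constant at most $2\,\norm{\Sigma^{1/2}}/\sqrt{n(1-2\alpha)}$, where $\norm{\Sigma^{1/2}}^2=\sup_{\ubsd}u^\top\Sigma u$ is the largest eigenvalue of $\Sigma$. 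Indeed this map factors as $Z\mapsto(Z_i^\top\Sigma^{1/2}u)_{i=1}^n$ (which is $\norm{\Sigma^{1/2}u}$-Lipschitz, and $\norm{\Sigma^{1/2}u}^2\le\norm{\Sigma^{1/2}}^2$), then coordinatewise sorting (non-expansive), then either the linear functional $y\mapsto\langle w,y\rangle$ with $\norm{w}_2=1/\sqrt m$ for $\tm_\alpha$, or $y\mapsto\frac1m\sum_{j\in M}|y_j-\bar y_M|$ for $\tad_\alpha$; here $m=n-2\floor{n\alpha}\ge n(1-2\alpha)$ is the number of retained order statistics $M$, and a one-line gradient computation gives $\norm{\nabla\tad_\alpha}_2\le 2/\sqrt m$ as a function of the sorted sample. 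Since a supremum of $L$-Lipschitz functions is $L$-Lipschitz, $S_n(T)=\sup_{\ubsd}|T(\hat\nu_u)-T(\nu_u)|$ inherits the same Lipschitz constant $L\le 2\,\norm{\Sigma^{1/2}}/\sqrt{n(1-2\alpha)}$ in $Z$.

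By the Gaussian concentration inequality for Lipschitz functions it then holds, for all $t\ge0$, that
\[
\Pr\!\big(S_n(T)\ge\mathbb{E}[S_n(T)]+t\big)\le\exp\!\Big(-\tfrac{t^2}{2L^2}\Big)\le\exp\!\Big(-\tfrac{cn(1-2\alpha)t^2}{\norm{\Sigma^{1/2}}^2}\Big).
\]
The remaining ingredient is the expectation bound $\mathbb{E}[S_n(T)]\le C_1\sqrt{\norm{\Sigma^{1/2}}^2 d/(n(1-2\alpha))}$ with $C_1$ universal. I would derive this by transferring to the empirical CDF process: arguing as in the proof of Lemma~\ref{lem::quantile_process_concen} (the quantile representation of a trimmed functional plus the mean value theorem), $|T(\hat\nu_u)-T(\nu_u)|$ is bounded by a universal multiple of $\norm{\Sigma^{1/2}}\cdot\sup_y|F_{\hat\nu_u}(y)-F_{\nu_u}(y)|$, since the Gaussian density at the relevant inner quantiles exceeds a universal multiple of $\norm{\Sigma^{1/2}}^{-1}$; and $\mathbb{E}\,\sup_{\ubsd}\sup_y|F_{\hat\nu_u}(y)-F_{\nu_u}(y)|\lesssim\sqrt{d/n}$ because this is an empirical process indexed by half-spaces, a class of VC dimension $d+1$, for which the VC/chaining bound carries no logarithmic factor.

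Combining the two pieces is routine: for $t\ge 2\mathbb{E}[S_n(T)]$ the displayed inequality gives $\Pr(S_n(T)\ge t)\le\exp(-cn(1-2\alpha)t^2/\norm{\Sigma^{1/2}}^2)$, while for smaller $t$ the bound $\mathbb{E}[S_n(T)]^2\le C_1^2\norm{\Sigma^{1/2}}^2 d/(n(1-2\alpha))$ shows that the claimed right-hand side $\exp(-cn(1-2\alpha)t^2/\norm{\Sigma^{1/2}}^2+d)$ is already at least $1$ once the universal constant is chosen so the additive $d$ absorbs $4C_1^2 d$, so the inequality holds vacuously. Either way $\Pr(S_n(T)\ge t)\lesssim\exp(-cn(1-2\alpha)t^2/\norm{\Sigma^{1/2}}^2+d)$, which is Condition~\ref{cond::Pop_Bound} with $\zeta_{n,\mu}=\zeta_{n,\sigma}$ as stated, both functionals having received the same Lipschitz constant and the same expectation bound up to universal constants. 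The main obstacle is the expectation bound, and within it keeping the constant uniform down to $\alpha\to 0$: the half-space VC bound must be used in its no-log form, and the regime $\alpha\approx0$, where the trimming quantiles leave the bulk of the Gaussian so the density lower bound degrades, should be handled separately by comparing $\tm_\alpha(\hat\nu_u)$ and $\tad_\alpha(\hat\nu_u)$ directly with the sample mean and its mean absolute deviation.
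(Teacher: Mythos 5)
Your approach differs genuinely from the paper's. The paper reduces to the standard normal, decomposes $S_n(\tad_\alpha)$ into a term involving $\bar X_\alpha$ and a term involving $\norm{|\sam{Z}{n}^\top u|-c\mathbbm{1}}_2$, and controls each by Gaussian concentration over a $1/4$-net of $\bS^{d-1}$, the net cardinality $9^d$ supplying the $e^{d}$ factor directly. You instead observe that $Z\mapsto T(\hat\nu_u)$ factors as projection, coordinatewise sorting, and a linear (resp.\ $\tad$) map, each Lipschitz, giving a Lipschitz constant $L\lesssim\norm{\Sigma^{1/2}}/\sqrt{n(1-2\alpha)}$ that the supremum $S_n(T)$ inherits; Gaussian concentration then controls the tail around $\mathbb{E}[S_n(T)]$, and the $+d$ in the target exponent makes the claimed bound vacuous for $t\lesssim\sqrt{d}\,L$, so the whole lemma reduces to the expectation bound $\mathbb{E}[S_n(T)]\lesssim\sqrt{d}\,L$. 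This is cleaner, and in one respect more careful than the paper's argument, whose first display treats $S_n(\tm_\alpha)$ as $\norm{\bar X_\alpha}$ even though the projected trimmed mean is not a linear functional of the sample.

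The genuine gap, which you flag yourself, is the expectation bound. The CDF-to-quantile transfer you sketch controls $|T(\hat\nu_u)-T(\nu_u)|$ by the Kolmogorov distance times the interquantile range $\xi_{1-\alpha,u}-\xi_{\alpha,u}$, which for a Gaussian projection is of order $\sigma_u|\Phi^{-1}(\alpha)|$ and diverges as $\alpha\to 0$, while the target $\sqrt{d}\,L$ stays $O(\sqrt{d\norm{\Sigma^{1/2}}^2/n})$; the ``compare directly with $\bar X$ near $\alpha=0$'' patch you mention is plausible but is not carried out, so as written the proposal does not establish the expectation bound uniformly in $\alpha\in[0,1/2)$. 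Separately, your exponent carries $\norm{\Sigma^{1/2}}^2$ where the lemma as stated (and the paper's own proof) writes $\norm{\Sigma^{-1/2}}^2$; your version is the one consistent with how the lemma is invoked in Example~\ref{ex::gaus}, where the scale is $\beta_d=\lambda_{\max}(\Sigma)$, which indicates the discrepancy is a typo in the lemma rather than an error on your part.
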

\begin{proof}
WLOG, assume that $\mu=0$, and let $Z_\alpha\sim N(0,\Sigma/n(1-2\alpha))$. 
Then, it follows from Gaussian concentration of measure
\begin{equation}\label{eqn::tmpb}
       \Prr{S_{n}(\tm_\alpha)\geq t}= \Prr{\sup_u |\bar X_{\alpha}^\top u|\geq t}=\Prr{\norm{\bar X_{\alpha}}\geq t}    \lesssim e^{-n(1-2\alpha)t^2/\norm{\Sigma^{-1/2}}^2}.
\end{equation}
Now, we focus on $\Prr{S_{n}(\tad_\alpha)\geq t}$. 
First, we have that $$\Prr{S_{n}(\tad_\alpha)\geq t}\leq \Prr{S_{n}(\tad_\alpha)\geq t/\norm{\Sigma^{-1/2}}}.$$ 
From now on, assume that $\nu=\cN(0,I)$ and let $\sam{Z}{n}\sim N(0,I)$. 
\begin{align*}
   S_{n}(\tad_\alpha)&=\sup_{\ubsd} |\tad_\alpha(\hat\nu_u)-\tad_\alpha(\nu_u)|\\
   &= \sup_{\ubsd} |\frac{1}{n(1-2\alpha)}\sum_{i=\alpha n}^{n-\alpha n} |X_i^\top u-\bar X_{\alpha}^\top u|-\int|Z^\top u|d\nu|\\
   &= \sup_{\ubsd} \frac{1}{n(1-2\alpha)}\sum_{i=\alpha n}^{n-\alpha n}| |X_i^\top u-\bar X_{\alpha}^\top u|-\sqrt{2\pi}|\\
   &= \sup_{\ubsd} \frac{1}{n(1-2\alpha)}\sum_{i=\alpha n}^{n-\alpha n}||Z_i^\top u| -\sqrt{2\pi}|+\sup_{\ubsd}|\bar X_{\alpha}^\top u-\mu_u|\\
   &= \frac{1}{n(1-2\alpha)}\sup_{\ubsd} \norm{|\sam{Z}{n}^\top u| -\sqrt{2\pi}\mathbbm{1}}_1+\sup_{\ubsd}|\bar X_{\alpha}^\top u-\mu_u|\\
   &\leq \frac{1}{\sqrt{n(1-2\alpha)}}\sup_{\ubsd} \norm{|\sam{Z}{n}^\top u| -\sqrt{2\pi}\mathbbm{1}}_2+\sup_{\ubsd}|\bar X_{\alpha}^\top u-\mu_u|\\
   &\coloneqq I+II.
\end{align*}
Applying this bound, along with a union bound yields that there exists a universal constant $C>0$ such that 
\begin{align*}
    \Prr{S_{n}(\tad_\alpha)\geq t}&\leq \Prr{I+II\geq Ct}\leq \Prr{I\geq Ct/2}+\Prr{II\geq Ct/2}.
\end{align*}
Now, for $II$, applying \eqref{eqn::tmpb} yields that 
\begin{equation}
    \Prr{II\geq Ct/2}\leq e^{-n(1-2\alpha)t^2/\norm{\Sigma^{-1/2}}^2}. 
\end{equation}

We now focus on $I$, and apply an $\epsilon$-net argument. 
Recall that the set $\cN_\epsilon$ is an $\epsilon$-net on $\bS^{d-1}$ if for all $\ubsd$, there exists $v\in \cN_\epsilon$ such that $\norm{u-v}\leq \epsilon$. 
Letting $\cN_\epsilon$ be an arbitrary $\epsilon$-net on $\bS^{d-1}$, it is straightforward (see, e.g., \citep[][Exercise 4.4.4]{vershynin2018high}) to show that for any $c>0$, $y\in \rdd$ and $0<\epsilon<1/2$,
$$\sup_{\ubsd}||y^\top u|-c|\leq \frac{C}{1-2\epsilon}\sup_{u\in \cN_\epsilon}||y^\top u|-c|.$$
Furthermore, there exists $\cN_{1/4}$ such that $|\cN_{1/4}|\leq 9^d$ \citep[][see Corollary 4.2.13]{vershynin2018high}. 
Applying these facts, along with Gaussian concentration of measure yields that there exists a universal constant $c>0$ such that
\begin{align*}
    \Prr{I\geq Ct/2}&\leq 9^d\sup_{u\in\cN_{1/4}}\Prr{\frac{1}{\sqrt{n(1-2\alpha)}} \norm{|\sam{Z}{n}^\top u| -\sqrt{2\pi}\mathbbm{1}}_2}\\
    &=9^d\Prr{\frac{1}{\sqrt{n(1-2\alpha)}} \norm{|\sam{Z}{n}'| -\sqrt{2\pi}\mathbbm{1}}_2}\\
    &\lesssim e^{-n(1-2\alpha)ct^2+d}.
\end{align*}
This completes the proof. 

 
\end{proof}
\section{Cauchy lower bound}
Here, we present a lower bound on the sample complexity of estimating the location parameter from a measure with Cauchy marginals.  
For a set $S$, let $\tilde\cM_{1,\epsilon,n}(S)$ be the set of maps from $\hat\nu$ to $\cM_1(S)$ which satisfy \eqref{eqn::dp}. 
Note that $\tilde\cM_{1,\epsilon,n}(S)$ is essentially the set of all differentially private estimators which lie in $S$. 
Let $\cC$ be the set of measure with Cauchy marginals over $\rdd$. 
\begin{theorem}\label{lem::s_c_cauchy}
For all $n,d\geq 1$, $t\geq 0$ it holds that 
$$n=\Omega\left(\frac{d}{t ^2}\vee \frac{d}{t\epsilon}\right),$$
samples are required for 
\begin{equation}\label{eqn::mmlb}
    \inf_{T_\epsilon\sim \tilde\cM_{1,\epsilon,n}(\re)}\sup_{\nu\in \cC}\E{}|T_\epsilon(\hat\nu)-\theta(\nu)| \leq t . 
\end{equation}
\end{theorem}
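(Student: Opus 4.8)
The plan is to prove the two terms of the lower bound separately and then take their maximum, working throughout with the sub‑family of $\cC$ consisting of products of translated standard Cauchy marginals: write $P_\theta=\mathrm{Cauchy}(\theta)^{\otimes d}$ for $\theta\in\rdd$, so that the location parameter of $P_\theta$ is $\theta$. Since $\{P_\theta:\theta\in\rdd\}\subset\cC$, any minimax lower bound obtained over this sub‑family is a lower bound for the quantity in \eqref{eqn::mmlb}, and it applies to every $\epsilon$‑differentially private estimator. The $d/t^2$ term is ``statistical'' (I would obtain it ignoring privacy), while the $d/(t\epsilon)$ term is the ``privacy'' term and is where the comparison with private Gaussian mean estimation enters.

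\textbf{Statistical term.} First I would lower bound the non‑private minimax risk of estimating $\theta$ from $P_\theta^{\otimes n}$ by $c\sqrt{d/n}$, which forces $t\gtrsim\sqrt{d/n}$, i.e.\ $n=\Omega(d/t^2)$. The tool is Assouad's lemma on the hypercube $\{0,\rho\}^d$ with $\rho\asymp 1/\sqrt n$. The only Cauchy‑specific input is the scalar estimate $\KL(\mathrm{Cauchy}(0)\,\|\,\mathrm{Cauchy}(a))\le Ca^2$ for $|a|\le1$ (finiteness of the Fisher information of the standard Cauchy), which gives $\KL(P_0^{\otimes n}\,\|\,P_\rho^{\otimes n})\le Cn\rho^2\le 1/4$ coordinatewise for the right constant in $\rho$, hence a per‑coordinate two‑point testing error bounded below by a constant via Pinsker. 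Assouad then yields $\inf_T\sup_\theta\E{}{\|T-\theta\|^2}\gtrsim d\rho^2$, and dividing by the $\ell_2$‑diameter $\rho\sqrt d$ of the parameter cube converts this into $\inf_T\sup_\theta\E{}{\|T-\theta\|}\gtrsim\sqrt d\,\rho\asymp\sqrt{d/n}$.

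\textbf{Privacy term.} For $n=\Omega(d/(t\epsilon))$ I would run the standard packing lower bound for pure differential privacy, in the same form used to prove the lower bound for private Gaussian mean estimation (\citet[Theorem~3.1]{Cai2019}; see also \citet{Kamath2018}); this is precisely the sense in which the Cauchy problem is ``at least as hard'' as that one, since the Gaussian argument uses the Gaussian family only through the two scalar estimates $\KL(\text{two params})\lesssim(\text{displacement})^2$ and $\mathrm{Hel}^2(\text{two params})\lesssim(\text{displacement})^2$, both of which the standard Cauchy satisfies, and $\mathrm{Hel}^2$ tensorizes, so $\TV(P_0^{\otimes d},P_\theta^{\otimes d})\le\sqrt2\,\mathrm{Hel}(P_0^{\otimes d},P_\theta^{\otimes d})\lesssim\|\theta\|_2$ for $\|\theta\|_\infty\lesssim1$. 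Concretely: fix $\theta_1,\dots,\theta_M$ with $\|\theta_j\|_2\le 4t$, pairwise $\ell_2$‑separation $\ge 2t$, and $\log M=\Omega(d)$ (a packing of a ball of radius $4t$); assume for contradiction that some $\epsilon$‑DP estimator $T$ has worst‑case risk $\le t/8$, so that $\Prr{\|T-\theta_j\|\le t/2}\ge 1/2$ under $P_{\theta_j}^{\otimes n}$ by Markov; via the coupling characterisation of $\TV$ applied to the $d$‑fold products, realise each $P_{\theta_j}^{\otimes n}$ by a dataset $X^{(j)}$ that is simultaneously ``good'' for $\theta_j$ and differs from a common reference dataset $X^{\ast}$ in at most $m=O(nt)$ rows; then group privacy together with disjointness of the balls $B(\theta_j,t/2)$ gives $1\ge\sum_j\Prr{T(X^{\ast})\in B(\theta_j,t/2)}\ge\tfrac12 Me^{-m\epsilon}$, whence $m\epsilon\gtrsim\log M\gtrsim d$ and therefore $nt\epsilon\gtrsim d$. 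Collecting the two displays proves the claim.

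The step I expect to be the main obstacle is making the realisation in the privacy term rigorous: producing one reference dataset $X^{\ast}$ and, for every $j$, a dataset $X^{(j)}$ that is at once Hamming‑close to $X^{\ast}$ and lies in the high‑probability event of $P_{\theta_j}^{\otimes n}$ on which a small‑risk estimator must output near $\theta_j$. This needs a first‑moment argument combining the Markov bound ($\Prr{X^{(j)}\text{ is good for }\theta_j}\ge1/2$ for $X^{(j)}\sim P_{\theta_j}^{\otimes n}$) with the coupling bound ($\E{}{|X^{\ast}\triangle X^{(j)}|}\lesssim n\,\TV(P_0^{\otimes d},P_{\theta_j}^{\otimes d})\lesssim nt$), and it is exactly here that one must be careful to keep the dimension dependence in $m\lesssim nt$ (rather than $m\lesssim nt\sqrt d$, which would only give $n=\Omega(\sqrt d/(t\epsilon))$) by using the tensorized Hellinger bound for $\TV$ rather than a coordinatewise union bound. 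The remaining ingredients — the existence of the packing, group privacy, Assouad, and the Cauchy tail/Hellinger computations — are routine.
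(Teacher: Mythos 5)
Your proposal is correct in spirit, but it reconstructs from scratch machinery that the paper simply imports as a black box. The paper's proof is a one-step application of the DP-packing lower bound (stated there as Corollary~\ref{lemm::p_assouad}, from \citet{Acharya2021}): it fixes a $1/2$-packing $\{v_1,\dots,v_M\}$ of the unit ball with $M\geq 2^d$, sets $\theta_i=4\alpha v_i$, computes $\KL(P_{\theta_i},P_{\theta_j})=\sum_k\log(1+|\theta_{ik}-\theta_{jk}|^2/4)\le\|\theta_i-\theta_j\|^2/4\le 16\alpha^2$ using the closed-form Cauchy KL, and reads off $n^*=\Omega(d/t^2\vee d/(t\epsilon))$ from the cited corollary, which already bundles the statistical term (via KL) and the privacy term (via TV and $\epsilon$). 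You instead split the two terms, proving the $d/t^2$ part via Assouad on a hypercube (using only finiteness of the Cauchy Fisher information) and the $d/(t\epsilon)$ part via the coupling-plus-group-privacy argument with a tensorized-Hellinger bound on TV. The latter is exactly the proof \emph{inside} the corollary the paper invokes, and you correctly flag the genuine technical crux — producing a single reference dataset $X^*$ that is simultaneously Hamming-close (in expectation, then by Markov) to a good realization for every $\theta_j$ — which the black-box citation lets the paper skip. Both routes hinge on the same packing of a ball of radius $\asymp t$ with $\log M\asymp d$ points; yours is more self-contained and makes the TV computation explicit (the paper actually omits the $\gamma$ bound needed to apply its corollary, whereas your Hellinger tensorization supplies it and, as you note, is necessary to avoid a spurious $\sqrt d$ loss), while the paper's is shorter and offloads the coupling delicacy to \citet{Acharya2021}.
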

\begin{proof}
We apply Corollary 4 of \citep{Acharya2021}, of which a simpler version is restated below for clarity. 
For $\cP\subset \cM_1(\re)$, let $n^*=\inf\{n\colon \inf_{T_\epsilon\sim \tilde\cM_{1,\epsilon,n}(\re)}\sup_{\nu\in \cP}\int |T_\epsilon(\hat\nu)-T_\epsilon(\nu) |d\nu^n\leq \tau\}.$
\begin{corollary}[\cite{Acharya2021}]\label{lemm::p_assouad}
For all $\epsilon,\tau>0$ and any $\cP\subset \cM_1(\re)$, let $\cQ=\{Q_1,\ldots,Q_m\}\subset \cP$. 
If for all $i\neq j$, it holds that $|T_\epsilon(Q_i)-T_\epsilon(Q_j) |\geq 3\tau$, $\KL(Q_i,Q_j)\leq \beta$, and $\TV(Q_i,Q_j)\leq \gamma$, then
then $n^*= \Omega\left(\log m(\beta^{-1}\vee (\gamma\epsilon)^{-1})\right)$. 
\end{corollary}
Let $P_{\theta}$ be  the Cauchy measure with location parameter $\theta\in \re$ and let $P_{\theta}=\prod _{i=1}^n P_{\theta_i}$ for $\theta=(\theta_1,\ldots,\theta_d)\in\rdd$. 
Now, consider a 1/2-packing of the unit ball in $\rdd$: $V=\{v_1,\ldots,v_M\}$. 
Recall that the packing number of the unit ball is at least $2^d$, and so $M\geq 2^d$. 
Next, define $\Theta=\{\theta_1,\ldots,\theta_M\}$, where $\theta_i=4\alpha v_i$ for some $\alpha>0$. 
It follows that for $i\neq j$,  $\norm{\theta_i-\theta_j}=4\alpha\norm{v_i-v_j}\geq \alpha$ and that $\norm{\theta_i-\theta_j}\leq \norm{\theta_i}+\norm{\theta_j}\leq 8\alpha$. 

Now, let $\KL(P,Q)$ denote the Kullback–Leibler divergence between two measures $P$ and $Q$. 
We now bound the Kullback–Leibler divergence between $\KL(P_{\theta_i},P_{\theta_j})$ for $j\neq i$. 
To this end, we have that 
\begin{align*}
\KL(P_{\theta_i},P_{\theta_j})&=\sum_{k=1}^d\log \left(1+|\theta_{ik}-\theta_{jk}|^2/4\right)\leq  \norm{\theta_{i}-\theta_{j}}^2/4\leq 16\alpha^2.
\end{align*}
Applying Corollary~\ref{lemm::p_assouad}, we have that 
$n^*=\Omega(\frac{d}{\delta^2}\vee \frac{d}{\delta\epsilon})$.
\end{proof}
\end{document}